\documentclass[11pt,reqno,a4paper]{amsart} 

\makeatletter
\def\@tocline#1#2#3#4#5#6#7{\relax
	\ifnum #1>\c@tocdepth 
	\else
	\par \addpenalty\@secpenalty\addvspace{#2}%
	\begingroup \hyphenpenalty\@M
	\@ifempty{#4}{%
		\@tempdima\csname r@tocindent\number#1\endcsname\relax
	}{%
		\@tempdima#4\relax
	}%
	\parindent\z@ \leftskip#3\relax \advance\leftskip\@tempdima\relax
	\rightskip\@pnumwidth plus4em \parfillskip-\@pnumwidth
	#5\leavevmode\hskip-\@tempdima
	\ifcase #1
	\or\or \hskip 1em \or \hskip 2em \else \hskip 3em \fi%
	#6\nobreak\relax
	\dotfill\hbox to\@pnumwidth{\@tocpagenum{#7}}\par
	\nobreak
	\endgroup
	\fi}
\makeatother

\usepackage[utf8]{inputenc}
\usepackage{amsmath}
\usepackage{amsthm}
\usepackage{amssymb}
\usepackage{euscript}
\usepackage{mathrsfs}
\usepackage{wasysym}
\usepackage[all]{xy}
\usepackage{graphicx}
\usepackage{color}
\usepackage{multirow}
\usepackage{pifont}
\usepackage[table]{xcolor}
\usepackage{tikz-cd}
\usepackage{enumitem}
\usepackage{xcolor}
\usepackage{verbatim}
\usepackage{tikz-cd}
\usepackage[hidelinks]{hyperref}

\usepackage[babel]{csquotes}        
\usepackage[maxnames=99,backend=bibtex,firstinits=true,url=false,doi=false,isbn=false]{biblatex}   
\AtBeginBibliography{\scriptsize}
\usepackage{chngcntr}
\counterwithout*{equation}{section}
\counterwithin{equation}{section}

\numberwithin{figure}{section}

\newtheorem*{mthm1}{Theorem 1}
\newtheorem*{mthm2}{Theorem 2}
\newtheorem{theorem}{Theorem}[section]
\newtheorem{corollary}[theorem]{Corollary}
\newtheorem{lemma}[theorem]{Lemma}
\newtheorem{proposition}[theorem]{Proposition}

\newtheorem{notation}[theorem]{Notation}
\newtheorem{definition}[theorem]{Definition}

\theoremstyle{definition}

\newtheorem{example}[theorem]{Example}

\newtheorem{remark}[theorem]{Remark}

 \newcommand{\N}{{\mathbb N}}
 \newcommand{\R}{{\mathbb R}}
\newcommand{\Q}{{\mathbb Q}}

 \newcommand{\D}{{\mathbb D}}
 \newcommand{\Proy}{{\mathbb P}}

\newcommand{\Ii}{\mc{I}}

\newcommand{\sign}{\operatorname{sign}}

\newcommand{\id}{\operatorname{id}}

\newcommand{\cinfty}{{\EuScript{C}^\infty}}

\newcommand{\mc}{\mathcal}

\newcommand{\mscr}{\mathscr}

\newlist{mydescription}{description}{1}
\setlist[mydescription]{font=\normalfont\normalcolor\textsc, before*={\normalfont}}

\begin{document}
	
	\title[]{Algebraic realization of stable Poincaré-Reeb graphs} 
	
\author{Enrico Savi}
\address{Univ Angers, CNRS, LAREMA, SFR MATHSTIC, F-49000 Angers, France}
\email{enrico.savi@univ-angers.fr}
\thanks{The author is supported by GNSAGA of INDAM, by the ANR NewMIRAGE (ANR-23-CE40-0002) and has been supported by ANR and IdEX of Université Côte d'Azur (IDEX UCAJedi ANR-15-IDEX-01) during the development of this work.}

\begin{abstract}
	We introduce the notion of domain of finite type $\mscr{D}\subset\R^n$ generalizing an earlier work of Bodin, Popescu-Pampu and Sorea. Then, we prove that every finite graph admitting a good orientation whose vertices have degree 1 or 3 can be realized as the Poincaré-Reeb graph of a stable (globally) algebraic domain of finite type $\mscr{D}\subset\R^n$, for every $n\geq 2$.  If in addition $n\geq 3$, we construct a class of graphs allowing vertices of degree $2$ also. Algebraic approximation techniques à la Nash-Tognoli and stable Morse functions are fundamental tools in our approach. In particular, the recent extensions over $\Q$ of such algebraic approximation techniques developed by Ghiloni and the author allow us to reduce the coefficients of the describing polynomials over $\Q$ and to extend our constructions over real closed fields.
\end{abstract}


\keywords{Real algebraic sets, Poincaré-Reeb graphs, Morse functions, Algebraic domains.}
\subjclass[2010]{Primary 14P05, 58K05; Secondary 14P20, 14P25.} 
\date{\today}

\maketitle


\tableofcontents


\section*{Introduction}

\subsubsection*{Reeb graphs of smooth functions on manifolds} The main topic discussed in this article is Reeb graphs, often referred to in the literature as Kronrod-Reeb graphs or Poincaré-Reeb graphs. Throughout this article, we will refer to these objects as Reeb graphs when considering graphs associated with smooth functions on a smooth manifold and as Poincaré-Reeb graphs when considering graphs associated with domains $\mscr{D}\subset\R^n$ of finite type, generalizing the case introduced and studied in \cite{BPS24}. In this way, we follow the main notation that appears in the literature for such objects and distinguish them when we relate these notions.

After their first appearance due to Reeb \cite{Ree46}, whereas Poincaré \cite{Poi10} introduced similar concepts already in 1904, Reeb graphs proved to be extremely important invariants in the study of Morse functions and, subsequently, for their concrete applications in mathematical modeling, see \cite{BGSF08}. We introduce its first definition in a more general context. Let $X$ be a topological space and $f:X\to \R$ a continuous function. We define the \emph{Reeb space} $\widetilde{X}_ {f}$ of $f$ as the quotient topological space of $X$ with respect to the equivalence relation $\sim$ on $X$ defined as: $x\sim y$ if $f(x)=f(y)=z$ and $x$ and $y$ belong to the same connected component of $f^{-1}(z)$. Therefore, the topological space $\widetilde{X}_f$ encodes topological information about the levels of $f:X\to\R$. Usually, we assume that $X$ and $f$ have an additional structure to ensure that the topological space $\widetilde{X}_f$ is tractable. For the purpose of this paper, we are mostly interested in the case where $X$ is a smooth manifold and $f:X\to \R$ is a Morse function. There is an extensive literature on Reeb spaces of Morse functions; in this article, we will focus on some aspects of the theory. It is important to note that several extensions of this concept have recently been made in different directions, by Gelbukh, who, in a series of papers, extended various results to Morse-Bott functions \cite{Gel24} and conducted a more in-depth study of the topology of Reeb spaces \cite{Gel18}, and by Saeki \cite{Sae22,Sae24}, who extended the study of Reeb spaces to the case of smooth functions on compact smooth manifolds having a finite number of critical values. There are also extensions of these concepts to singular spaces such as in semi-algebraic geometry, or more generally o-minimal geometry, over arbitrary real closed fields, see \cite{PS07,BCP22}. In all these contexts, what allows us to  understand well Reeb spaces is that they admit a graph structure. Therefore, in all the above situations, we refer to the \emph{Reeb graph} $\mathcal{R}(f)$ of a function $f:X\to \R$ as the Reeb space $\widetilde{X}_f$ endowed with its graph structure. We specify that in all the previous contexts, graph actually means \emph{multigraph}. We will maintain this notation throughout the paper.

Then, a natural question arises: which classes of graphs can be obtained as Reeb graphs of a smooth function $f:M\to \R$, where $M$ is a compact smooth manifold? A first complete answer is given by Sharko \cite{Sha06} in the case of Morse functions on surfaces and then generalized by Michalak \cite{Mic18} in all dimensions. In both cases, a graph $\Gamma:=(V,E)$ is realizable as a Reeb graph of a smooth function on a compact smooth manifold $M$ if and only if $\Gamma$ admits a good orientation, see Definition \ref{def:orientation} below and \cite[Thm.4.3]{Mic18}. Observe that in the previous results the manifold $M$ is not fixed in advance: when fixing $M$ the problem of classifying Morse functions on $M$ becomes harder, a very non-trivial description appeared already in the case of the 2-dimensional sphere \cite{Nic08}. 

One can then require further regularity of $M$ and of the Morse function $f:M\to \R$, for example that $M$ be a nonsingular real algebraic set and that $f:M\to \R$ be a regular map, i.e., a global quotient of polynomials with non-vanishing denominator. As a consequence of the techniques introduced in \cite{GSa,Sav1}, a general answer follows in the case of graphs of degree $1$ and $3$. In fact, let $\Gamma:=(V,E)$ be a graph with a given good orientation. By \cite[Thm.4.3]{Mic18}, there exists a Morse function $f:M\to \R$ that realizes $\Gamma$ as its Reeb graph $\mathcal{R}(f)$, and we can further assume that $f$ has distinct critical values. Then, applying \cite[Thm.3.9]{GSa}, there exists a $\Q$-nonsingular $\Q$-algebraic set $M'$, a $\cinfty$ diffeomorphism $\phi: M\to M'$, and a $\Q$-regular function $g:M'\to\R$ such that $g\circ\phi$ is arbitrarily close to $f$ in $\cinfty(M)$. Here and throughout the paper we consider the space of smooth maps $\cinfty(M,\R^m)$ endowed with the weak smooth topology, for more details see \cite[\S 1\;\&\;2]{Hir94}. Therefore, since Morse functions with distinct critical values are stable in the sense of \cite{Mat69}, we directly obtain that $\mathcal{R}(g)$ also realizes $\Gamma$. Moreover, if $M\subset\R^n$ is a smooth hypersurface, then $M'$ as above can be obtained as a $\Q$-nonsingular $\Q$-algebraic hypersurface of $\R^n$ by a simplified argument of \cite[Thm.3.9]{GSa}, for details look at the proof of Theorem \ref{thm:approx} below. For detailed definitions of real $\Q$-algebraic geometry introduced in \cite{FG,GSa}, in the special case of hypersurfaces, see Section \ref{sub:1.4}. These last remarks will be crucial for our results.

\subsubsection*{Poincaré-Reeb graphs of domains of finite type and their realization}

In this article, we are primarily interested in a slightly different situation than in the previous context. In 2024, Bodin, Popescu-Pampu, and Sorea \cite{BPS24} originally introduced the notions of domain of finite type $\mscr{D}\subset\R^2$, Poincaré-Reeb graph $\mathcal{R}(\mscr{D})$ of $\mscr{D}$, and studied the topological properties of $\mscr{D}$ encoded by its Poincaré-Reeb graph. Their main result \cite[Thm.3.5]{BPS24} consists in producing an algebraic realization of graphs with vertices of degree $1$ or $3$. More precisely, the result states that every graph of degree $1$ and $3$ can be realized as the Poincaré-Reeb graph $\mathcal{R}(\mscr{D})$ of a domain of finite type $\mscr{D}\subset\R^2$ whose boundary $\partial \mscr{D}$ consists of a (finite) union of  nonsingular connected components of an algebraic curve in $\R^2$. This latter paper is the main motivation for the results presented in this article and a source of inspiration for generalizing the constructions in different directions.

In this article, we introduce the notion of a \emph{domain of finite type} $\mscr{D}\subset\R^n$ as a natural generalization of the original definition in \cite{BPS24}. More precisely, $\mscr{D}\subset\R^n$ is an embedded manifold with boundary such that the restriction on $\partial\mscr{D}$ of the projection $\pi:\R^n\to \R$ on the first coordinate is a Morse function with a finite number of critical values and $\pi|_{\mscr{D}}$ is proper. Then, in Section \ref{sec:1}, we define the notions of \emph{Poincaré-Reeb space} $\widetilde{\mscr{D}}$ and \emph{Poincaré-Reeb graph} $\mathcal{R}(\mscr{D})$ of $\mscr{D}$ which correspond, in the above notation, to the Reeb space and the Reeb graph of the restriction to $\mscr{D}$ of the projection onto the first coordinate, respectively, and we study the topological information encoded by these invariants. Some of these properties are deduced after relating the Poincaré-Reeb graph of a compact domain of finite type $\mscr{D}\subset\R^n$ to the Reeb graph of a smooth function on a compact smooth hypersurface $M\subset\R^{n+1}$ that we call the \emph{double} of $\mscr{D}$. Then, we show that the Poincaré-Reeb graph $\mathcal{R}(\mscr{D})$ of a compact domain of finite type $\mscr{D}\subset\R^n$ do admit a \emph{good orientation} in the sense of \cite{Mic18}, for every $n\geq 2$, see Lemma \ref{cor:orientation}. This highlights an inaccuracy in \cite{BPS24}: the main result proven applies to graphs that admit a good orientation, not to every graph $\Gamma$ having vertices of degree $1$ and $3$ as stated. We outline that the proof of \cite[Thm.3.5]{BPS24} implicitly makes use the notion of good orientation, see \cite[Fig.14]{BPS24}. See Figure \ref{fig:ex1} for an example of a graph with only vertices of degree $3$ that cannot be realized as the Poincaré-Reeb graph of any domain of finite type, as it does not have any vertex of degree $1$. Next, we define the notion of a \emph{stable} domain of finite type by requiring that the function $\pi|_{\partial{\mscr{D}}}$ be a stable Morse function in the sense of \cite{Mat69}, i.e., a Morse function with distinct critical values. For further information on stable mappings, see \cite{GG73}. Finally, we extend the notion of algebraic domain of finite type in a similar way to \cite{BPS24}, but then we will mainly focus on a stronger notion that we call \emph {globally algebraic domain of finite type} $\mscr{D}\subset\R^n$, i.e., a domain of finite type $\mscr{D}\subset\R^n$ whose boundary $\partial\mscr{D}$ is a nonsingular algebraic hypersurface of $\R^n$.

Section \ref{sec:2} is devoted to our approximation results. In particular, since we apply the approximation techniques developed in \cite{GSa,Sav1}, we will obtain equations over $\Q$. More precisely, our main approximation result is as follows:

\begin{mthm1}\label{mthm1}
	Every stable compact domain of finite type $\mscr{D}\subset\R^n$ is vertically equivalent to a globally algebraic stable domain of finite type $\mscr{D}'\subset\R^n$. In addition, we may assume that $\partial\mscr{D}'\subset\R^n$ is a projectively $\Q$-closed $\Q$-nonsingular $\Q$-algebraic hypersurface arbitrarily $\cinfty$ close to $\partial\mscr{D}$, that is:
	\begin{enumerate}[label=\emph{(\roman*)}, ref=(\roman*)]
		\item there exists $q\in\Q[x_1,\dots,x_n]$ such that $\mscr{D}'=\{x\in\R^n\,|\,q(x)\geq 0\}$, $\partial\mscr{D}'=\mathcal{Z}_{\R^n}(q)$ and $\nabla q(x)\neq \overline{0}$ for every $x\in\partial\mscr{D}'$.
		\item $\partial\mscr{D}'\subset\R^n$ coincides with its projective closure in $\mathbb{P}^n(\R)$.
	\end{enumerate}
\end{mthm1}

Therefore, Theorem \ref{mthm1} guarantees that, after constructing stable compact domains of finite type $\mscr{D}\subset\R^n$, it is always possible to obtain a globally algebraic stable domain of finite type $\mscr{D}'\subset\R^n$ that is vertically equivalent to $\mscr{D}$, so in particular $\mathcal{R}(\mscr{D})\cong\mathcal{R}(\mscr{D}’)$. Therefore, the problem of algebraically realizing stable domains of finite type with a given Poincaré-Reeb graph reduces to the construction of stable domains of finite type $\mscr{D}\subset\R^n$ with a fixed Poincaré-Reeb graph. Thus, combining the results in \cite{Mic18} and our doubling technique in Proposition \ref{prop:hom-equiv} with the application of Theorem 1 above, we obtain the following result:

\begin{mthm2}\label{thm:2}
	Let $\Gamma:=(V,E)$ be a finite graph (without unbounded edges) whose vertices have degree $1$ or $3$ with $|\Gamma|\subset\R^n$ such that $\pi|_{|\Gamma|}:|\Gamma|\to \R$ is a good orientation with $\pi(v)\neq\pi(w)$ for every $v,w\in V$ with $v \neq w$. Then, there exists a globally algebraic domain $\mscr{D}\subset\R^n$ whose Poincaré-Reeb graph $\mathcal{R}(\mscr{D})$ is isomorphic to $\Gamma$. In addition:
	\begin{enumerate}[label=\emph{(\roman*)}, ref=(\roman*)]
		\item $\partial\mscr{D}$ is a projectively $\Q$-closed $\Q$-nonsingular $\Q$-algebraic hypersurface of $\R^n$.
		\item $\mscr{D}$ is homotopically equivalent to the Poincaré-Reeb space $\widetilde{\mscr{D}}$.
	\end{enumerate}
\end{mthm2}

We emphasize that Theorem \ref{thm:2} consists of a generalization of \cite[Thm.3.6]{BPS24} even in the case $n=2$. In fact, we ensure that $\partial\mscr{D}\subset\R^2$ is a nonsingular algebraic curve which also coincides with its projective closure in $\mathbb{P}^2(\R)$, and not just a union of nonsingular connected components of an algebraic curve. Note in particular that the domain $\mscr{D}\subset\R^n$ has a particularly simple structure since it coincides with the locus of positivity of a nonsingular polynomial $q\in\Q[x]$. We then conclude the article by extending the result to allow some vertices of index $2$; see Theorem \ref{thm:Poincaré-Reeb2} below for a detailed statement. Finally, since we get polynomials with rational coefficients, we discuss in Remark \ref{rem:final} how to extend our results to any real closed field.

\vspace{1em}

\section{Poincaré-Reeb graphs of domains of finite type in $\R^n$}\label{sec:1}

\vspace{0.5em}

\subsection{Domains of finite type in $\R^n$ and their Poincaré-Reeb graphs}\label{sub:1.1} Let us introduce the definition of a domain of finite type of $\R^n$, for $n\geq2$.

\begin{definition}\label{def:dom}
	Let $\mscr{D}\subset\R^n$ be a closed $\cinfty$ submanifold of $\R^n$ of dimension $n$ with boundary $\partial\mscr{D}$ and denote by $\pi:\R^n\to\R$ the projection onto the first coordinate. We say that $\mscr{D}$ is a \emph{domain of finite type} if:
	\begin{enumerate}[label=\emph{(\roman*)}, ref=(\roman*)]
		\item\label{en:dom1} $\pi|_{\partial\mscr{D}}$ is a Morse function with finitely many critical values.
		\item\label{en:dom2} $\pi|_{\mscr{D}}$ is a proper map.
	\end{enumerate}
\end{definition}

We remark that for the case $n=2$ the original notion of a domain of finite type $\mscr{D}\subset\R^2$ in \cite[Definition 2.7]{BPS24} is set in the $\EuScript{C}^0$ category whereas our Definition \ref{def:dom} requires the boundary $\partial\mscr{D}$ to be a $\cinfty$ submanifold and $\pi|_{\mscr{D}}$ to be a Morse function. This is not a substantial difference when comparing the two definitions for $n=2$. Indeed, whenever we apply approximation techniques higher regularity (at least $\EuScript{C}^2$) is necessary to preserve the shape of critical points of $\pi|_{\partial\mscr{D}}$ even in \cite{BPS24}.

\begin{remark}\label{rem:connected}
	Let $\mscr{D}\subset\R^n$ be a domain of finite type, then:
	\begin{enumerate}[label=(\roman*), ref=(\roman*)]
		\item $\pi|_{\partial\mscr{D}}$ has a finite number of critical points.
		\item $\partial\mscr{D}\subset\R^n$ is a $\cinfty$ hypersurface having a finite number of connected components.
	\end{enumerate}
	Observe that by Morse's Lemma \cite[\S 6.1]{Hir94}, the set of critical points of $\pi|_{\partial\mscr{D}}$ is discrete. Since $\pi$ satisfies Definition \ref{def:dom}\ref{en:dom1}\&\ref{en:dom2} and $\partial{\mscr{D}}$ is closed in $\mscr{D}$, then $\pi|_{\partial\mscr{D}}$ is proper as well. This proves that the set of critical points of $\pi|_{\partial\mscr{D}}$ is a closed and discrete subset of a compact subset of $\mscr{D}$, hence it is finite.  Let $b_1,\dots,b_\ell\in\R$ be the critical values of $\pi|_{\partial\mscr{D}}$. Let $b_0:=-\infty$ and $b_{\ell+1}:=+\infty$. Since $\pi|_{\partial\mscr{D}}$ is proper and Morse, we know that $\pi|_{\partial\mscr{D}}^{-1}(]b_i,b_{i+1}[)\cong \pi|_{\partial\mscr{D}}^{-1}(a)\times ]b_i,b_{i+1}[$, for $a\in]b_i,b_{i+1}[$, has a finite number of connected components for every $i\in\{0,\dots,\ell\}$ . Since the number of critical points of $\pi|_{\partial\mscr{D}}$ is finite, we deduce that $\partial\mscr{D}\subset\R^n$ has a finite number of connected components.
\end{remark}

Let us introduce the notion of Poincaré-Reeb graph for a domain $\mscr{D}\subset\R^n$, for every $n\geq 2$.

\begin{definition}\label{def:P-R-space}
	Let $\mscr{D}\subset\R^n$ be a domain of finite type. We define the equivalence relation $\sim$ on $\mscr{D}$ as follows: for every $x,y\in\mscr{D}$ we say that $x\sim y$ if $x$ and $y$ lie in the same connected component of $\pi^{-1}(\pi(x))\cap\mscr{D}$. Denote by $\widetilde{\mscr{D}}:=\mscr{D}/\sim$ the quotient topological space of $\mscr{D}$ under the equivalence relation $\sim$ and by $\rho:\mscr{D}\to\widetilde{\mscr{D}}$ the quotient map. We refer to $\widetilde{\mscr{D}}$ as the Poincaré-Reeb space of $\mscr{D}$.
\end{definition}

Observe that the Poincaré-Reeb space $\widetilde{\mscr{D}}$ of $\mscr{D}\subset\R^n$ has a graph structure $\Gamma:=(V,E)$. Indeed, the Thom conjecture on the triangulation of smooth mappings proved by Shiota \cite{Shi00} ensures that the projection $\pi|_\mscr{D}$ is triangulable. Then, \cite{HS13} implies that the Stein factorization of $\pi|_{\mscr{D}}$, that is, the unique continuous function $\widetilde{\pi}:\widetilde{\mscr{D}}\to \R$ such that $\pi|_{\mscr{D}}=\widetilde{\pi}\circ\rho$, is triangulable as well. This directly implies that $\widetilde{\mscr{D}}$ is triangulable of dimension $1$, that is, it admits a graph structure. More concretely, by following the proof proposed by Sharko \cite[Lem.2.1]{Sha06},  $\widetilde{\mscr{D}}$ admits a graph structure $\Gamma:=(V,E)$ defined in the following way:

\begin{enumerate}[label=(\roman*), ref=(\roman*)]
	\item Define the set of vertices $V$ as the set of critical points of $\pi|_{\partial\mscr{D}}$. Define a partial order on $V$ as $v_1\prec v_2$ if and only if $\pi(v_1)<\pi(v_2)$.
	\item Define the set of edges $E$ as follows:
	\begin{enumerate}
		\item We add an edge $e$ between two vertices $v_1,v_2\in V$, with $v_1\prec v_2$, corresponding to each connected component of $\pi|_{\mscr{D}}^{-1}(]\pi(v_1),\pi(v_2)[)$ whose closure intersects both the connected component of $\pi^{-1}|_{\mscr{D}}(\pi(v_1))$ and $\pi^{-1}|_{\mscr{D}}(\pi(v_2))$ containing $v_1$ and $v_2$, respectively. 
		\item We add an unbounded edge $e$ with vertex $v$ for every unbounded connected component of $\mscr{D}$ of the form $\pi|_{\mscr{D}}^{-1}(]-\infty,\pi(v)[)$ or $\pi|_{\mscr{D}}^{-1}(]\pi(v),+\infty[)$ containing no critical points of $\pi|_{\partial\mscr{D}}$ and whose closure intersect the connected component of $\pi|_{\mscr{D}}^{-1}(v)$ containing $v$. 
		\item We add an unbounded edge $e$ for every unbounded connected component of $\mscr{D}$ containing no critical points of $\pi|_{\partial\mscr{D}}$. 
	\end{enumerate}
\end{enumerate}


The previous construction motivates the next definition.

\begin{definition}\label{def:PR-graph}
	Let $\mscr{D}\subset\R^n$ be a domain of finite type. The Poincaré-Reeb space $\widetilde{\mscr{D}}$ equipped with the graph structure described above is denoted by $\mathcal{R}(\mscr{D})$ and it is called the \emph{Poincaré-Reeb graph} of $\mscr{D}$.
\end{definition}

We extend some notions and results originally proven in \cite{BPS24} for domains of finite type $\mscr{D}\subset\R^2$. Our results clarify in the general case which topological information is encoded by the Poincaré-Reeb graph $\mathcal{R}(\mscr{D})$ of a domain of finite type $\mscr{D}\subset\R^n$. Sometimes the results will be a bit weaker since the case of domains of finite type $\mscr{D}\subset \R^2$ is very special: for every $z\in\R$ each connected component of $\pi|_{\mscr{D}}^{-1}(z)$ is an interval. By contrast, when $n>2$ we have infinitely many types of $\cinfty$ manifolds with boundary which may appear as connected components of $\pi|_{\mscr{D}}^{-1}(z)$ for a regular value $z\in\R$ and, of course, many different singular preimages whenever $z\in\R$ is a critical value.

\begin{remark}\label{rem:crit}
		Let $M\subset\R^n$ be a $\cinfty$ hypersurface such that $\pi|_M$ is a Morse function. Let $h\in\cinfty(\R^n)$ such that $\mathcal{Z}_{\R^n}(h)=M$ and $\nabla h(x)\neq \overline{0}$ for every $x\in M$. Then, $x\in M$ is a critical point of $\pi|_M$ if and only if $\frac{\partial h}{\partial x_i}(x)=0$ for every $i=2,\dots,n$.
\end{remark}

First of all let us describe the local behavior of the projection $\pi|_{\mscr{D}}$ at a critical point $p$ of $\pi|_{\partial\mscr{D}}$ and the local structure of the critical level $\pi^{-1}(\pi(p))\cap\mscr{D}$ at $\pi(p)$.

\begin{lemma}\label{lem:Morse}
	Let $\mscr{D}\subset\R^n$ be a domain of finite type and let $p\in\partial\mscr{D}$ be a critical point of $\pi|_{\mscr{D}}$. Then, there are neighborhoods $V$ of $p$ in $\R^n$, $U$ of the origin $\overline{0}$ in $\R^n$ and a diffeomorphism $\Phi:U\to V$ such that $\Phi(\overline{0})=p$ and
	\[
	\pi\circ\Phi(y_1,\dots,y_n)=\pi(p)+y_1+\sum_{i=2}^{n} \epsilon_i y_i^2,
	\]
	where $\epsilon_i\in\{\pm 1\}$ are uniquely determined by the index of $p$ as a critical point of $\pi|_{\partial\mscr{D}}$. 
	
	In particular, if the index of $p$ is $k\in\{0,\dots,n-1\}$, then the germ of $\pi^{-1}(\pi(p))\cap \mscr{D}$ at $p$ is homeomorphic to the cone $\textnormal{Cone}(\mathbb{D}^{n-k-1}\times\mathbb{S}^{k-1})$ of $\mathbb{D}^{n-k-1}\times\mathbb{S}^{k-1}$ at the origin.
\end{lemma}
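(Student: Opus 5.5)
The plan is to straighten the boundary first and then apply the Morse lemma only in the directions tangent to $\partial\mscr{D}$. Choose a local defining function $h$ for $\partial\mscr{D}$ near $p$, with $\mscr{D}=\{h\geq 0\}$ locally and $\nabla h(p)\neq\overline{0}$. Since $p$ is critical for $\pi|_{\partial\mscr{D}}$, Remark \ref{rem:crit} gives $\frac{\partial h}{\partial x_i}(p)=0$ for $i\geq 2$. Hence $\frac{\partial h}{\partial x_1}(p)\neq 0$. By the implicit function theorem, near $p=(p_1,p')$ the hypersurface $\partial\mscr{D}$ is a graph $x_1=g(x_2,\dots,x_n)$ with $g(p')=p_1=\pi(p)$. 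Moreover $\mscr{D}$ is locally one of the two sides $\{x_1\geq g(x')\}$ or $\{x_1\leq g(x')\}$. In the graph parametrization, $\pi|_{\partial\mscr{D}}$ is just $g$, so $p'$ is a nondegenerate critical point of $g$ whose index is that of $p$.

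Next apply the classical Morse lemma \cite[\S 6.1]{Hir94} to $g$. This gives a diffeomorphism $\psi$ from a neighborhood of $\overline{0}\in\R^{n-1}$ onto a neighborhood of $p'$ with $\psi(\overline{0})=p'$ and $g(\psi(z))=\pi(p)+\sum_{i=2}^n\epsilon_i z_i^2$. Here exactly $k$ of the $\epsilon_i$ equal $-1$, so the signs are determined by the index. Then define
\[
\Phi(y_1,z)=\Bigl(\pi(p)+y_1+\sum_{i=2}^n\epsilon_i z_i^2,\ \psi(z)\Bigr).
\]
Its differential at $\overline{0}$ is block triangular with invertible diagonal blocks, so $\Phi$ is a local diffeomorphism with $\Phi(\overline{0})=p$, and $\pi\circ\Phi$ has the required form by construction. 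Moreover, $\Phi$ carries $\{y_1=0\}$ onto $\partial\mscr{D}$ and a half-space $\{\pm y_1\geq 0\}$ onto $\mscr{D}$. So $\Phi$ also normalizes the pair $(\mscr{D},\partial\mscr{D})$, which the cone statement needs.

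For the cone statement, pull back the level set. In the normalized coordinates, $\pi^{-1}(\pi(p))$ is $\{y_1=-\sum\epsilon_i z_i^2\}$, a graph over $z\in\R^{n-1}$. Its intersection with $\mscr{D}=\{y_1\geq0\}$ (say) is therefore homeomorphic, via projection to $z$, to $\{\sum\epsilon_i z_i^2\leq 0\}$. Split $z=(a,b)\in\R^k\times\R^{n-1-k}$ according to the negative and positive signs. The set becomes $\{|b|\leq|a|\}$, which is invariant under positive dilations. It is therefore the cone over its intersection with $\{\max(|a|,|b|)=1\}$, namely $\{|a|=1,\ |b|\leq1\}\cong\mathbb{S}^{k-1}\times\mathbb{D}^{n-k-1}$. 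When $k=0$ this degenerates correctly to a point, the cone over the empty set.

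The main obstacle is the sign bookkeeping: which side of $\partial\mscr{D}$ the domain lies on relative to $y_1$. If $\mscr{D}$ corresponds to $\{y_1\leq 0\}$, the same computation gives $\{|a|\leq|b|\}$, that is, the cone over $\mathbb{D}^{k}\times\mathbb{S}^{n-k-2}$. I would handle this by fixing the convention that the index of $p$ is measured so that the domain lies on the side $\{y_1\geq 0\}$. Equivalently, one can use $-\pi$ or the inward-normal convention; replacing $\pi$ by $-\pi$ exchanges the index $k$ with $n-1-k$. This convention should be stated explicitly, so that the $\epsilon_i$ are indeed "uniquely determined by the index" and the cone formula holds as written. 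Uniqueness of the signs up to permutation follows from Sylvester's law of inertia applied to the Hessian of $g$ at $p'$.
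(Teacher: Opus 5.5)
Your proof is correct, and it reaches the normal form by a somewhat different route from the paper's.

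\textbf{How your route differs.} The paper applies the Morse lemma to $\pi|_{\partial\mscr{D}}$ on the hypersurface itself. It then thickens the resulting chart $\psi$ along the unit normal $\nu=\nabla h/\|\nabla h\|$, which gives
\[
\pi\circ\Psi=\pi(p)+\sum_{i\geq 2}\epsilon_i y_i^2+y_1\omega(y'),\qquad \omega(\overline{0}')=\pm 1,
\]
and finally absorbs $\omega$ by the substitution $(y_1,y')\mapsto(y_1\omega(y'),y')$. You instead use $\frac{\partial h}{\partial x_1}(p)\neq 0$ to write $\partial\mscr{D}$ as a graph $x_1=g(x')$, and then translate along the $x_1$-axis, $\Phi(y_1,z)=(g(\psi(z))+y_1,\psi(z))$. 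As a result $\pi\circ\Phi$ is already in normal form with no rescaling step. It is also immediate that $\Phi$ carries $\{y_1=0\}$ onto $\partial\mscr{D}$, which is what gives the signs $\epsilon_i$ a meaning via Sylvester's law; since $\pi$ is a submersion, the normal form alone would not fix them. For the cone, the paper uses the round link $S\cap\mathbb{S}^{n-2}$ and an explicit homeomorphism onto $\mathbb{S}^{k-1}\times\mathbb{D}^{n-k-1}$. Your max-norm link makes that identification tautological. Both work.

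\textbf{Your sign caveat is correct, and the paper overlooks it.} The paper's collar satisfies $\Psi([0,1[\times U')=\mscr{D}\cap V$. After the substitution $y_1\mapsto y_1\omega(y')$, however, the domain corresponds to $\{y_1\geq 0\}$ only when $\omega(\overline{0}')=+1$, that is, when $\frac{\partial h}{\partial x_1}(p)>0$. The paper nonetheless always computes with $\{y_1\geq 0\}$. When $\frac{\partial h}{\partial x_1}(p)<0$, the germ is the cone over $\mathbb{D}^{k}\times\mathbb{S}^{n-k-2}$, exactly as you say.

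The rightmost point of the unit disk in $\R^2$ shows the issue concretely. There the index is $k=1$, so the stated formula predicts $\textnormal{Cone}(\mathbb{D}^0\times\mathbb{S}^0)$, an arc. The actual germ of $\{x_1=1\}\cap\mscr{D}$ is a single point. So the cone formula as stated genuinely needs your convention: replace $k$ by $n-1-k$ (equivalently, use $-\pi$) when $\mscr{D}$ lies on the side $x_1\leq g(x')$.
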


\begin{proof}
	Let $h\in\cinfty(\R^n)$ such that $\mathcal{Z}_{\R^n}(h)=\partial\mscr{D}$, $\nabla h(x)\neq \overline{0}$ for every $x\in\partial\mscr{D}$ and $\mscr{D}:=\{x\in\R^n\,|\, h(x)\geq 0\}$. By Morse's lemma \cite[\S 6.1]{Hir94} there is a sufficiently small open ball $B^n(p,\delta)\subset \R^n$ centered at $p$ of radius $\delta$, an open neighborhood $U'$ of $\overline{0}':=(0,\dots,0)$ in $\R^{n-1}$ and a diffeomorphism $\psi:U\to \partial\mscr{D}\cap B^n(p,\delta)$ such that $\psi(\overline{0}')=p$ and
	\[
	(\pi|_{\partial\mscr{D}}\circ \psi)(y_2,\dots,y_n)=\pi(p)+\sum_{i=2}^n \epsilon_i y_i^2, 
	\]
	where $\epsilon_i\in\{\pm 1\}$ are uniquely determined. Denote by $y':=(y_2,\dots,y_n)$ the above coordinates of $\R^{n-1}$. Consider the normal vector field $\nu\in\cinfty(\partial\mscr{D},\R^n)$ defined as $\nu(x):=\frac{\nabla h(x)}{||\nabla h(x)||}$. Then, the $\cinfty$ map $\Psi:]-1,1[\times U\to \R^n$ defined as
	\[
	\Psi(y_1,y'):=\psi(y')+y_1\nu(\psi(y'))
	\] 
	is a diffeomorphism when restricted to its image $V:=\Psi(]-1,1[\times U')$. Observe that, by construction, $\Psi(\overline{0})=\psi(\overline{0}')=p$, $\Psi(\{0\}\times U')=\psi(U')=B^n(p,\delta)\cap \partial\mscr{D}$ and $\Psi([0,1[\times U')=\mscr{D}\cap V$, as $\mscr{D}=\{x\in\R^n\,|\,h(x)\geq 0\}$.
	
	Let us study the local structure of $\pi\circ\Psi\in\cinfty(]-1,1[\times U')$ at $\overline{0}$, which is a critical point for $\pi\circ\Psi$. As the projection $\pi:\R^n\to \R$ is linear and $\Psi|_{\{0\}\times U'}(0,y')=\psi(y')$ for every $y'\in U'$, we have:
	\[
		(\pi\circ\Psi)(y_1,y')=\pi(\psi(y'))+y_1\pi(\nu(\psi(y')))=\pi(p)+\sum_{i=2}^n \epsilon_i y_i^2 + y_1\omega(y'),
	\]
	where $\omega:=\pi\circ\nu\circ\psi\in\cinfty(U')$. Moreover, as $\psi(\overline{0}')=p$ and $p$ is a critical point for $\pi|_{\partial\mscr{D}}$, then:
	\[
	\omega(\overline{0}')=\pi(\nu(p))={\frac{\partial h}{\partial x_1}}(p)\Big/{||\nabla h}(p)||=\pm1.
	\]
	Indeed, by Remark \ref{rem:crit}, we have that $\frac{\partial h}{\partial x_i}(p)= 0$ for every $i\in\{2,\dots,n\}$, but $\nabla h(p)\neq \overline{0}$, thus $\frac{\partial h}{\partial x_1}(p)\neq 0$ and $||\nabla h(p)||=\big|\frac{\partial h}{\partial x_1}(p)\big|$. Thus, up to shrinking the neighborhood $U'$ of $\overline{0}'$ in $\R^{n-1}$ if necessary, we may suppose that $\omega(y')\neq 0$ for every $y'\in U'$. Then, define the smooth map $\phi\in\cinfty(]-1,1[\times U',\R^n)$ as $\phi(y_1,y'):=(y_1\omega(y'),y')$, which is a diffeomorphism when restricted to its image as $\omega(y')\neq 0$ for every $y'\in U'$. Then, after defining $U:=\phi(]-1,1[\times U')$ and $\Phi:=\Psi\circ\phi^{-1}$, we get the desired local structure of $\pi\circ\Phi$ as follows:
	\[
	\pi\circ\Phi(y_1,\dots,y_n)=\pi(p)+y_1+\sum_{i=2}^{n} \epsilon_i y_i^2.
	\]
	
	Let us describe the local structure of $\pi^{-1}(\pi(p))$ at $p$. As $V=\Phi(U)$ is an open neighborhood of $p$ in $\R^n$, then $\pi^{-1}(\pi(p))\cap\mscr{D}\cap V$ is homeomorphic via $\Phi$ to the set $\{(y_1,y')\in U\,|\,y_1+\sum_{i=2}^{n} \epsilon_i y_i^2=0, y_1\geq 0\}=\{(-\sum_{i=2}^{n} \epsilon_i y_i^2,y')\in U\,|\,\sum_{i=2}^{n} \epsilon_i y_i^2\leq 0\}$, which is the intersection of $U$ with the graph of the function $-\sum_{i=2}^{n} \epsilon_i y_i^2$ restricted to $S:=\{y'\in\R^{n-1}\,|\,\sum_{i=2}^{n} \epsilon_i y_i^2\leq0\}$. Therefore, there are $\delta'>0$ and a neighborhood $V'$ of $p$ in $V\subset\R^n$ such that $\pi^{-1}(\pi(p))\cap\mscr{D}\cap V'$ is homeomorphic to $S\cap\mathbb{D}^{n-1}(\overline{0}',\delta')$. Observe that
	\[
	S\cap\mathbb{D}^{n-1}(\overline{0}',\delta')=\bigg\{y'\in\mathbb{D}^{n-1}(\overline{0}',\delta')\,|\,\sum_{i=2}^{n} \epsilon_i y_i^2\leq0\bigg\}
	\]
	is homeomorphic to the cone $\textnormal{Cone}_p(S\cap\mathbb{S}^{n-2}(\overline{0}',\delta'))$ of $(S\cap\mathbb{S}^{n-2}(\overline{0}',\delta')$ at $p$, as $\sum_{i=2}^{n} \epsilon_i (t y_i)^2=t^2(\sum_{i=2}^{n} \epsilon_i y_i^2)\leq 0$ if and only if $\sum_{i=2}^{n} \epsilon_i y_i^2\leq 0$ for $y'\in\mathbb{S}^{n-2}$ and for every $t\in\R$. Let us show that $S\cap\mathbb{S}^{n-2}(0,\delta')$ is homeomorphic to $\mathbb{S}^{k-1}\times\mathbb{D}^{n-k-1}$, where $k$ denotes the index of $p$ as a critical point of the Morse function $\pi|_{\partial\mscr{D}}$. Up to reordering the $y_i$'s, we may suppose that $\sum_{i=2}^{n} \epsilon_i y_i^2=-\sum_{i=2}^{k+1} y_i^2+\sum_{i=s+2}^{n} y_i^2$. Then, by identifying $\R^{n-1}$ with $\R^{k}\times\R^{n-k-1}$ and denoting by $u:=(y_2,\dots,y_{k+1})$ and by  $v:=(y_{k+2},\dots,y_{n-1})$, we have $\sum_{i=2}^{n} \epsilon_i y_i^2:=-||u||^2+||v||^2$. As a consequence,
	\begin{align*}
	S\cap\mathbb{S}^{n-2}&=\Big\{(u,v)\in\R^{k}\times\R^{n-k-1}\,|\,-||u||^2+||v||^2\leq 0, ||u||^2+||v||^2=1\Big\}\\
											&=\Big\{(u,v)\in\R^{k}\times\R^{n-k-1}\,|\,||u||^2\geq \frac{1}{2}, ||u||^2+||v||^2=1\Big\}
	\end{align*}
	and therefore the continuous map $\varphi:S\cap\mathbb{S}^{n-2}\to \mathbb{R}^{k}\times\mathbb{R}^{n-k-1}$ defined as $\varphi(u,v):=\big(\frac{u}{||u||},\frac{v}{||u||}\big)$ is well defined. Observe that $w:=\frac{u}{||u||}\in\mathbb{S}^{k-1}$ and $z:=\frac{v}{||u||}\in \D^{n-k-1}$ as $||u||^2=\frac{1}{1+||z||^2}\geq \frac{1}{2}$ if and only if $||z||\leq 1$, that is,  $\varphi:S\cap\mathbb{S}^{n-2}\to \mathbb{S}^{k-1}\times\mathbb{D}^{n-k-1}$. Moreover, its inverse $\varphi^{-1}: \mathbb{S}^{k-1}\times\mathbb{D}^{n-k-1}\to S\cap\mathbb{S}^{n-2}$ is defined by $\varphi^{-1}(w,z)=\Big(\frac{w}{\sqrt{1+||z||^2}},\frac{z}{\sqrt{1+||z||^2}}\Big)$, which is continuous as well; this shows that $\varphi:S\cap\mathbb{S}^{n-2}\to\mathbb{S}^{k-1}\times\mathbb{D}^{n-k-1}$ is a homeomorphism. Therefore, the germ of $\pi^{-1}(\pi(p))\cap\mscr{D}$ at $p$ is homeomorphic to the cone $\textnormal{Cone}(\mathbb{S}^{k-1}\times\mathbb{D}^{n-k-1})$ of $\mathbb{S}^{k-1}\times\mathbb{D}^{n-k-1}$ at $\overline{0}'\in\R^{n-1}=\R^{k}\times\R^{n-k-1}$, as desired.
\end{proof}

Let us describe the first topological information encoded by the Poincaré-Reeb space $\widetilde{\mscr{D}}$ of a domain of finite type $\mscr{D}\subset\R^n$.

\begin{proposition}\label{prop:hom-equiv}
	Let $\mscr{D}\subset\R^n$ be a domain of finite type. If for every $y\in\R$ the $k$-th fundamental group of each connected component of $\pi|_{\mscr{D}}^{-1}(y)$ is trivial for every $1\leq k\leq n-1$, then $\mscr{D}$ is homotopically equivalent to $\widetilde{\mscr{D}}$. 
	
\end{proposition}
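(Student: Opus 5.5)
The plan is to show that the quotient map $\rho:\mscr{D}\to\widetilde{\mscr{D}}$ is a homotopy equivalence, using Whitehead's theorem or, more robustly, a Vietoris--Begle type argument. Both $\mscr{D}$ and $\widetilde{\mscr{D}}$ are triangulable. For $\mscr{D}$ this holds because it is a smooth manifold with boundary; for $\widetilde{\mscr{D}}$ it follows from the triangulability of the Stein factorization discussed after Definition \ref{def:P-R-space}, via \cite{Shi00,HS13}. Moreover $\rho$ can be taken simplicial with respect to suitable triangulations. Hence both spaces are CW complexes, and it is enough to prove that $\rho$ induces isomorphisms on all homotopy groups, for every choice of base point. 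The hypothesis reads as follows: every fibre $\rho^{-1}(t)$ is a connected component $F$ of a level set $\pi|_{\mscr{D}}^{-1}(y)$. Such an $F$ is a compact set because $\pi|_{\mscr{D}}$ is proper, it is triangulable, and it has dimension at most $n-1$. By assumption $\pi_k(F)=0$ for $1\le k\le n-1$; since $F$ is a connected CW complex of dimension at most $n-1$, Hurewicz and Whitehead then give that $F$ is contractible.

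The core step is a local one. First I would decompose $\widetilde{\mscr{D}}$ by its graph structure $\Gamma=(V,E)$. Take an open edge $e$ lying over an interval $]b_i,b_{i+1}[$ that contains no critical values. By Remark \ref{rem:connected} and Ehresmann's theorem for the proper submersion $\pi$ restricted to the interior and to the boundary of $\mscr{D}$, the set $\rho^{-1}(e)$ is diffeomorphic to $F\times e$ with $F$ a regular fibre, and therefore deformation retracts onto a section over $e$. Now take a vertex $v$ corresponding to a critical point $p$. Here I would use Lemma \ref{lem:Morse}, together with the standard collar and gradient-like flow argument of Morse theory applied in the $\pi$ direction. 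This should give a small open star neighbourhood $N_v$ of $v$ in $\widetilde{\mscr{D}}$ such that $\rho^{-1}(N_v)$ deformation retracts onto the critical fibre $\rho^{-1}(v)$. The mechanism is that, away from critical points, the flow of a vector field lifting $-\partial_{x_1}$ or $+\partial_{x_1}$ pushes nearby levels onto the critical level, with the local cone structure of Lemma \ref{lem:Morse} controlling what happens near $p$. Since $\rho^{-1}(v)$ is contractible by the previous paragraph, $\rho^{-1}(N_v)$ is contractible, and so is $N_v$. The same reasoning applies to every open star and every union of edges, so $\rho$ restricted over each member of an open cover of $\widetilde{\mscr{D}}$ closed under finite intersections is a homotopy equivalence from a contractible set onto a contractible set. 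Intersections of stars are open edges or empty, which makes the cover good in this sense.

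At this point I would apply the gluing theorem for homotopy equivalences over open covers, in the form of tom Dieck or Segal, or equivalently the Vietoris--Begle / Smale theorem for proper maps with contractible fibres between ANRs. This yields that $\rho$ is a homotopy equivalence. The proper map version only needs $\rho$ to be proper, which it is since $\pi|_{\mscr{D}}$ is proper, with contractible and locally contractible fibres. Those fibres are compact triangulable sets, so they are ANRs, and the conclusion follows.

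The main obstacle is the deformation retraction of $\rho^{-1}(N_v)$ onto the singular fibre. Near the critical point $p$ one cannot simply flow along $\partial_{x_1}$ inside $\mscr{D}$, because the boundary changes topology there. I would first try to build an explicit retraction in the coordinates $\Phi$ of Lemma \ref{lem:Morse}, sending $(y_1,y')$ to a point with $y_1=-\sum_{i\geq 2}\epsilon_iy_i^2$ on the half-space $y_1\ge0$. I would then patch this with a flow tangent to $\partial\mscr{D}$ away from $p$, using a partition of unity. If that patching proves delicate, a fallback is to note that $\rho^{-1}(v)$ is a compact subpolyhedron, hence it has a regular neighbourhood in $\mscr{D}$ that deformation retracts to it, and for $N_v$ small enough $\rho^{-1}(N_v)$ is contained in such a neighbourhood. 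Only homotopy equivalence of the pieces is needed, not an explicit retraction, so this fallback suffices for the gluing argument.
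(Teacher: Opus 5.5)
Your proposal is correct, but its main line takes a genuinely different route from the paper's.

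\textbf{The paper's route.} The paper does no local analysis of $\rho$ over the graph. After reducing to connected $\mscr{D}$, it checks three things:
\begin{enumerate}
\item $\rho$ is proper;
\item $\mscr{D}$ is LC$^n$;
\item each fibre $\rho^{-1}(x)$ is LC$^{n-1}$ (via the cone structure of Lemma \ref{lem:Morse}) and $(n-1)$-connected.
\end{enumerate}
It then applies Smale's Vietoris mapping theorem \cite{Sma57}, followed by Whitehead's theorem.

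\textbf{Your main route.} You show that $\rho$ is a homotopy equivalence over each member of a good open cover of $\widetilde{\mscr{D}}$. Over open edges you use the product structure $F\times e$ given by Ehresmann; over vertex stars you use a retraction onto the critical fibre. You then glue with a tom Dieck type theorem.

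\textbf{Comparison.} Your closing alternative (a proper map with contractible, locally contractible fibres between ANRs) is essentially the paper's argument, and it already finishes the proof without your vertex step.
\begin{itemize}
\item What the paper's route buys is exactly this: it needs only local connectivity of the critical fibres, never a deformation retraction of a neighbourhood onto them.
\item What your write-up contributes is the observation that each fibre is a compact polyhedron of dimension at most $n-1$ (it lies in the hyperplane $\pi^{-1}(y)$), hence contractible by Hurewicz and Whitehead.
\end{itemize}
The paper tacitly needs this fact. With merely $(n-1)$-connected fibres, Smale's theorem only guarantees isomorphisms on $\pi_k$ for $k\le n-1$ and surjectivity for $k=n$. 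The paper's final appeal to Whitehead, however, uses that $\pi_k(\rho^{-1}(x))$ vanishes for \emph{every} $k$, and your dimension argument is precisely what supplies this.

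\textbf{A flaw in your fallback for the vertex step.} If you keep the gluing route, the fallback does not work as stated. Knowing that $\rho^{-1}(N_v)$ lies inside a regular neighbourhood $R$ of $\rho^{-1}(v)$ says nothing by itself about the homotopy type of $\rho^{-1}(N_v)$. A working repair:
\begin{itemize}
\item Choose $R\subset\rho^{-1}(N_v)$, and a smaller star $N'_v$ with $\rho^{-1}(N'_v)\subset R$. This is possible because $\rho$ is proper, hence closed.
\item The product structure over the edges makes $\rho^{-1}(N'_v)\hookrightarrow\rho^{-1}(N_v)$ a homotopy equivalence, and $\rho^{-1}(v)\hookrightarrow R$ is one as well.
\item Apply the two-out-of-six property to the chain $\rho^{-1}(v)\subset\rho^{-1}(N'_v)\subset R\subset\rho^{-1}(N_v)$. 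This gives $\rho^{-1}(v)\simeq\rho^{-1}(N'_v)$.
\end{itemize}
Alternatively, use Shiota's triangulation \cite{Shi00} of $\pi|_{\mscr{D}}$, chosen so that $\pi(v)$ is a vertex of the triangulation of $\R$. Each simplex over a small interval around $\pi(v)$ is then a join of its face over $\pi(v)$ and a face over an adjacent vertex, and the straight-line retraction along the join parameter does the job.
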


\begin{proof}
	Recall that $\mscr{D}$ has a finite number of connected components $\mscr{D}_1,\dots,\mscr{D}_k$, for some $k\geq 1$, by Remark \ref{rem:connected}.  Let us prove that $\rho(\mscr{D}_1),\dots,\rho(\mscr{D}_k)$ are the connected components of $\widetilde{\mscr{D}}$. Since $\rho$ is surjective, since each $\mscr{D}_i$ is both open and closed in $\mscr{D}$ and since the topology of $\widetilde{\mscr{D}}$ is defined by the images under $\rho$ of the $\rho$-saturated open subsets of $\mscr{D}$, we only have to prove that each $\mscr{D}_i$ is $\rho$-saturated, that is, $\rho^{-1}(\rho(\mscr{D}_i))=\mscr{D}_i$. Fix $i\in\{1,\dots,k\}$. Let $x\in\mscr{D}_i$ and  let $C$ be the connected component of $\pi^{-1}(\pi(x))$ containing $x$. Then $C=(C\cap\mscr{D}_i)\cup(C\cap\bigcup_{j\neq i}\mscr{D}_j)$, where $C\cap\mscr{D}_i\neq\varnothing$ implies $C\cap\bigcup_{j\neq i}\mscr{D}_j=\varnothing$ as $C$ is connected and both $\mscr{D}_i$ and $\bigcup_{j\neq i}\mscr{D}_j$ are nonempty, disjoint open subsets of $\mscr{D}$. As a consequence, we may suppose $\mscr{D}\subset\R^n$ and $\widetilde{\mscr{D}}$ are connected, and hence path connected as $\mscr{D}$ is a subset of $\R^n$ endowed with the topology induced by the Euclidean topology of $\R^n$ and $\widetilde{\mscr{D}}$ admits a graph structure $\mathcal{R}(\mscr{D})$. 
	
	Observe that  $\rho:\mscr{D}\to\widetilde{\mscr{D}}$ is proper. Indeed, let $F\subset\widetilde{\mscr{D}}$ be a compact subset. As $\widetilde{\mscr{D}}$ admits a graph structure, $F$ is closed, therefore $\rho^{-1}(F)$ is a closed subset of $\pi^{-1}(\pi(F))\cap\mscr{D}$, which is compact since $\pi|_{\mscr{D}}$ is proper. Hence, $\rho^{-1}(F)$ is compact, which proves that $\rho:\mscr{D}\to\widetilde{\mscr{D}}$ is proper. Recall that $\mscr{D}$ and $\widetilde{\mscr{D}}$ are path connected, or $0$-connected in the terminology of \cite{Sma57}, and locally compact and separable metric spaces as $\mscr{D}$ is a closed submanifold of $\R^n$ endowed with the induced Euclidean topology of $\R^n$ and $\widetilde{\mscr{D}}$ admits a graph structure. Let us show that $\mscr{D}\subset\R^n$ is LC$^n$ and $\rho^{-1}(x)=\pi^{-1}(\pi(x))\cap \mscr{D}$ is LC$^{n-1}$. We recall that a topological space $X$ is LC$^{m}$ (locally $m$-connected) if for every $x\in X$ and for every neighborhood $U$ of $x$ there is an open neighborhood $V\subset U$ of $x$ such that every continuous function $f:\mathbb{S}^k\to V$ is null homotopic in $U$ for every $k\in\{0,\dots,m\}$. Observe that, as $\mscr{D}$ is a manifold with boundary, then it is LC$^{n}$ as the local structure at any point is either a ball $B^{n}$ or $B^{n-1}\times [0,1[$. For the same reason, if $\pi(x)$ is a regular value for $\pi|_{\mscr{D}}$, then $\rho^{-1}(x)$ is a connected component of $\pi^{-1}(\pi(x))\cap\mscr{D}$, which is a manifold with boundary of dimension $n-1$, hence it is LC$^{n-1}$. Let $x\in \mscr{D}$ such that $\pi(x)$ is a critical value for $\pi|_{\mscr{D}}$. As before, if $p\in\rho^{-1}(x)$ is not a critical point of $\pi|_{\partial\mscr{D}}$, then the local model of $\pi^{-1}(\pi(x))\cap\mscr{D}$ at $p$ is either a $B^{n-1}$ or $B^{n-2}\times[0,1[$. Let $p\in\rho^{-1}(x)$ be a critical point of $\pi|_{\mscr{D}}$, then by Lemma \ref{lem:Morse} there is a (closed) neighborhood $U$ of $p$ in $\R^n$ such that $(\pi^{-1}(\pi(p))\cap \mscr{D})\cap U$ is homeomorphic to the cone $\textnormal{Cone}(\mathbb{S}^{k-1}\times\mathbb{D}^{n-k-1})$ of $\mathbb{S}^{k-1}\times\mathbb{D}^{n-k-1}$ at the origin $\overline{0}'$ of $\R^{n-1}$, where $k$ denotes the index of $p$ as a critical point of $\pi|_{\mscr{D}}$. As $\rho^{-1}(x)$ is the connected component of $\pi^{-1}(\pi(p))\cap \mscr{D}$ containing $p$ and $\textnormal{Cone}(\mathbb{S}^{k-1}\times\mathbb{D}^{n-k-1})$ is contractible, we conclude that $\rho^{-1}(x)$ is LC$^{n-1}$.
		
	Moreover, by assumption, for every $x\in\widetilde{\mscr{D}}$ we have that $\pi_k(\rho^{-1}(x))$ is trivial for every $k\in\{0,\dots,n-1\}$, where $\pi_k(*)$ denotes the $k$-th fundamental group operator and $\pi_k(\rho^{-1}(x))$ coincides with a connected component of $\pi^{-1}(\pi(x))\cap \mscr{D}$. In the terminology of \cite{Sma57}, this means that $\pi_k(\rho^{-1}(x))$ is $(n-1)$-connected for every $x\in \widetilde{\mscr{D}}$. Therefore, as $\rho:\mscr{D}\to \widetilde{\mscr{D}}$ is proper, $\mscr{D}$ and $\widetilde{\mscr{D}}$ are path connected, separable metric spaces, $\mscr{\mscr{D}}$ is LC$^n$ and for each $x\in\widetilde{\mscr{D}}$, $\rho^{-1}(x)$ is LC$^{n-1}$ and $(n-1)$-connected, \cite[Main Thm.]{Sma57} ensures that $\rho_*:\pi_k(\mscr{D})\to\pi_k(\widetilde{\mscr{D}})$ is an isomorphism for every $k=0,\dots,n$. As $\rho:\mscr{D}\to\widetilde{\mscr{D}}$ is a continuous function between CW-complexes and $\pi_k(\rho^{-1}(x))$ is trivial for every $k$, then Whitehead's Theorem \cite[Thm.4.5]{Hat02} ensures that $\rho:\mscr{D}\to\widetilde{\mscr{D}}$ is a homotopy equivalence.
\end{proof}

\begin{remark}
	Observe that in Proposition \ref{prop:hom-equiv} it is not necessary to suppose $\mscr{D}\subset\R^n$ is compact as it was required in \cite[Prop.2.24]{BPS24}.
\end{remark}

Observe that the triviality condition of the fundamental groups of each fiber of $\pi|_{\mscr{D}}$ of Proposition \ref{prop:hom-equiv} is not always guaranteed in dimension $n>2$, in contrast with the case $n=2$. Next example shows that, when the condition of Proposition \ref{prop:hom-equiv} is not satisfied, the result does not hold in general.

\begin{example}\label{ex:non-retract}
	Let $\Gamma:=(V,E)$ be a graph defined as $V=\{v_1,v_2,v_3,v_4\}$ and $E=\{[v_1,v_2],[v_2,v_3],$ $[v_3,v_4]\}$ and consider the realization $|\Gamma|\subset\R\times\{0\}\subset\R^n$ as in Figure \ref{fig:ex-homequiv}. Define the compact domain of finite type $\mscr{D}\subset\R^n$ as follows. Let $\partial\mscr{D}:=\mathbb{S}_1\cup \mathbb{S}_2$, where $\mathbb{S}_1$ denotes the unique $(n-1)$-sphere of $\R^n$ containing $v_1$ and $v_4$ whose center is contained in $|\Gamma|$ and $\mathbb{S}_2$ denotes the unique $(n-1)$-sphere of $\R^n$ containing $v_2$ and $v_3$ whose center is contained in $|\Gamma|$. Let $p_i\in\R[x_1,\dots,x_n]$ for $i=1,2$ such that: $\mathcal{Z}_{\R^n}(p_i)=\mathbb{S}_i$, $\{x\in\R^n\,|\,p_i(x)\geq 0\}$ is compact and $\nabla p_i(x)\neq \overline{0}$ for every $x\in S_i$ for $i=1,2$. Define $\mscr{D}:=\{x\in\R^n\,|\, -p_1(x)p_2(x)\geq 0\}$. Observe that, $\mscr{D}$ is a domain of finite type since the polynomials $p_1,p_2\in\R[x_1,\dots,x_n]$ have degree 2, so $\pi|_{\partial\mscr{D}}$ is a Morse function. By construction, $\mscr{D}$ is diffeomorphic to $\mathbb{S}^{n-1}\times[0,1]$, hence it is not contractible even if its Poincaré-Reeb space $\widetilde{\mscr{D}}$ is so as it coincides with the realization $|\Gamma|$ of $\Gamma$.
\end{example}

\begin{figure}
	\tikzset{every picture/.style={line width=0.75pt}} 
	
	\begin{tikzpicture}[x=0.75pt,y=0.75pt,yscale=-0.75,xscale=0.75]
		
			
			\draw [line width=1.5]    (207,135) -- (291,135.75) ;
			\draw [shift={(291,135.75)}, rotate = 0.51] [color={rgb, 255:red, 0; green, 0; blue, 0 }  ][fill={rgb, 255:red, 0; green, 0; blue, 0 }  ][line width=1.5]      (0, 0) circle [x radius= 4.36, y radius= 4.36]   ;
			\draw [shift={(207,135)}, rotate = 0.51] [color={rgb, 255:red, 0; green, 0; blue, 0 }  ][fill={rgb, 255:red, 0; green, 0; blue, 0 }  ][line width=1.5]      (0, 0) circle [x radius= 4.36, y radius= 4.36]   ;
			\draw [line width=1.5]    (375,136.5) -- (459,137.25) ;
			\draw [shift={(459,137.25)}, rotate = 0.51] [color={rgb, 255:red, 0; green, 0; blue, 0 }  ][fill={rgb, 255:red, 0; green, 0; blue, 0 }  ][line width=1.5]      (0, 0) circle [x radius= 4.36, y radius= 4.36]   ;
			\draw [shift={(375,136.5)}, rotate = 0.51] [color={rgb, 255:red, 0; green, 0; blue, 0 }  ][fill={rgb, 255:red, 0; green, 0; blue, 0 }  ][line width=1.5]      (0, 0) circle [x radius= 4.36, y radius= 4.36]   ;
			\draw [line width=1.5]    (291,135.75) -- (375,136.5) ;
			\draw [shift={(375,136.5)}, rotate = 0.51] [color={rgb, 255:red, 0; green, 0; blue, 0 }  ][fill={rgb, 255:red, 0; green, 0; blue, 0 }  ][line width=1.5]      (0, 0) circle [x radius= 4.36, y radius= 4.36]   ;
			\draw [shift={(291,135.75)}, rotate = 0.51] [color={rgb, 255:red, 0; green, 0; blue, 0 }  ][fill={rgb, 255:red, 0; green, 0; blue, 0 }  ][line width=1.5]      (0, 0) circle [x radius= 4.36, y radius= 4.36]   ;
			\draw   (207,135) .. controls (207,86.54) and (263.41,47.25) .. (333,47.25) .. controls (402.59,47.25) and (459,86.54) .. (459,135) .. controls (459,183.46) and (402.59,222.75) .. (333,222.75) .. controls (263.41,222.75) and (207,183.46) .. (207,135) -- cycle ;
			\draw  [draw opacity=0] (450.84,125.52) .. controls (456.11,128.83) and (459,132.42) .. (459,136.17) .. controls (458.99,152.74) and (402.58,166.15) .. (332.99,166.12) .. controls (263.4,166.1) and (207,152.65) .. (207,136.08) .. controls (207,132.18) and (210.14,128.45) .. (215.84,125.03) -- (333,136.13) -- cycle ; \draw   (450.84,125.52) .. controls (456.11,128.83) and (459,132.42) .. (459,136.17) .. controls (458.99,152.74) and (402.58,166.15) .. (332.99,166.12) .. controls (263.4,166.1) and (207,152.65) .. (207,136.08) .. controls (207,132.18) and (210.14,128.45) .. (215.84,125.03) ;  
			\draw  [draw opacity=0][dash pattern={on 4.5pt off 4.5pt}] (221.11,123.37) .. controls (241.67,113.88) and (284.25,107.32) .. (333.46,107.25) .. controls (384.89,107.18) and (429.11,114.22) .. (448.51,124.36) -- (333.5,136.06) -- cycle ; \draw  [dash pattern={on 4.5pt off 4.5pt}] (221.11,123.37) .. controls (241.67,113.88) and (284.25,107.32) .. (333.46,107.25) .. controls (384.89,107.18) and (429.11,114.22) .. (448.51,124.36) ;  
			\draw   (291,135.75) .. controls (291,123.39) and (309.8,113.38) .. (333,113.38) .. controls (356.2,113.38) and (375,123.39) .. (375,135.75) .. controls (375,148.11) and (356.2,158.13) .. (333,158.13) .. controls (309.8,158.13) and (291,148.11) .. (291,135.75) -- cycle ;
			\draw  [draw opacity=0][dash pattern={on 4.5pt off 4.5pt}] (294.45,140.2) .. controls (292.87,139.29) and (292,138.32) .. (292.01,137.31) .. controls (292.02,132.64) and (310.61,128.92) .. (333.53,129) .. controls (356.01,129.08) and (374.3,132.79) .. (374.98,137.33) -- (333.5,137.46) -- cycle ; \draw  [dash pattern={on 4.5pt off 4.5pt}] (294.45,140.2) .. controls (292.87,139.29) and (292,138.32) .. (292.01,137.31) .. controls (292.02,132.64) and (310.61,128.92) .. (333.53,129) .. controls (356.01,129.08) and (374.3,132.79) .. (374.98,137.33) ;  
			\draw  [draw opacity=0] (374.89,137.17) .. controls (373.1,141.86) and (355.2,145.4) .. (333.42,145.22) .. controls (310.48,145.02) and (291.92,140.76) .. (291.97,135.7) .. controls (291.97,135.66) and (291.97,135.63) .. (291.97,135.59) -- (333.5,136.06) -- cycle ; \draw   (374.89,137.17) .. controls (373.1,141.86) and (355.2,145.4) .. (333.42,145.22) .. controls (310.48,145.02) and (291.92,140.76) .. (291.97,135.7) .. controls (291.97,135.66) and (291.97,135.63) .. (291.97,135.59) ;  
			
			\draw (80,123.4) node [anchor=north west][inner sep=0.75pt]    {$\mathcal{R}(\mathscr{D}) =|\Gamma |$};
			\draw (476,43.4) node [anchor=north west][inner sep=0.75pt]    {$\mathscr{D} \subset \mathbb{R}^{n}$};

		\end{tikzpicture}
		
	\caption{A domain of finite type $\mscr{D}\subset\R^n$ which is not homotopically equivalent to its Poincaré-Reeb space $\widetilde{\mscr{D}}$, for $n\geq 3$.}\label{fig:ex-homequiv}
\end{figure}	

\subsection{Topological constraints in realizing Poincaré-Reeb graphs} In this paper we are interested in realizing domains of finite type $\mscr{D}\subset\R^n$ with a prescribed Poincaré-Reeb graph. 
Let us recall a notion of orientation for graphs introduced by Sharko \cite{Sha06} to study the realization of Morse functions on surfaces with prescribed Poincaré-Reeb graphs and then used by Michalak in \cite{Mic18} in higher dimensions. 

\begin{definition}\label{def:orientation}
	We say that a graph $\Gamma:=(V,E)$ (without unbounded edges) admits a \emph{good orientation} if there exists a continuous function $h:|\Gamma|\to\R$ such that $h$ is strictly monotonic on each edge and has extrema on vertices of degree $1$. Such a continuous function $h:|\Gamma|\to\R$ is called an \emph{orientation} of $\Gamma$. A graph $\Gamma$ endowed with an orientation $h$ is called an \emph{oriented graph} and is denoted by $(\Gamma,h)$.
\end{definition}

Observe that each orientation $h:|\Gamma|\to\R$ on a graph $\Gamma:=(V,E)$ induces a partial order $\prec_h$ on the set of vertices: for every $v_1,v_2\in V$ we say that $v_1\prec_h v_2$ if and only if $h(v_1)<h(v_2)$.

\begin{definition}\label{def:equiv-orientations}
	Let $\Gamma:=(V,E)$ be a graph and $h_1,h_2:|\Gamma|\to\R$ two orientations of $\Gamma$. We say that $h_1$ and $h_2$ are \emph{equivalent} if the induced orderings $\prec_{h_1}$ and $\prec_{h_2}$ on $V$ do coincide.
\end{definition}

\begin{definition}\label{def:iso-graph}
	Let $\Gamma:=(V,E)$ and $\Gamma':=(V',E')$ be two graphs endowed with the orientations $h:|\Gamma|\to\R$ and $h':|\Gamma'|\to\R$, respectively. We say that an isomorphism of graphs $\phi:\Gamma\to\Gamma'$ is an \emph{isomorphism of ordered graphs} if the orientations $h:|\Gamma|\to\R$ and $h'\circ\widetilde{\phi}:|\Gamma|\to\R$ on $\Gamma$ are equivalent, where $\widetilde{\phi}:|\Gamma|\to|\Gamma'|$ denotes the continuous map induced by $\phi$ on the realizations of $\Gamma$ and $\Gamma'$. If such an isomorphism of ordered graphs $\phi:\Gamma\to\Gamma'$ exists, we say that the ordered graphs $(\Gamma,h)$ and $(\Gamma',h')$ are \emph{isomorphic} and we write $(\Gamma,h)\cong(\Gamma',h')$.
\end{definition}

The next result shows the existence of a good orientation on the Poincaré-Reeb graph $\mathcal{R}(\mscr{D})$ of a domain of finite type $\mscr{D}\subset\R^n$ induced by $\pi|_{\mscr{D}}$.

\begin{lemma}\label{cor:orientation}
	Let $\mscr{D}\subset\R^n$ be a compact domain of finite type. Then its Poincaré-Reeb graph $\mathcal{R}(\mscr{D})$ admits a good orientation.
\end{lemma}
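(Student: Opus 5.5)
The natural candidate for the orientation is the Stein factorization $\widetilde{\pi}:\widetilde{\mscr{D}}\to\R$ of $\pi|_{\mscr{D}}$, i.e. the unique continuous map with $\pi|_{\mscr{D}}=\widetilde{\pi}\circ\rho$. I will check three things in turn: that $\widetilde{\pi}$ is continuous on $|\mathcal{R}(\mscr{D})|=\widetilde{\mscr{D}}$, that it is strictly monotonic on each edge, and that every vertex of degree $1$ is a local extremum of $\widetilde{\pi}$.

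\emph{Continuity and graph structure.} Continuity is immediate from the universal property of the quotient topology, since $\pi|_{\mscr{D}}$ is constant on the equivalence classes of $\sim$. Because $\mscr{D}$ is compact, $\pi|_{\mscr{D}}$ is proper, and $\pi|_{\partial\mscr{D}}$ has finitely many critical points $v_1,\dots,v_\ell$ by Remark~\ref{rem:connected}. Hence the graph $\mathcal{R}(\mscr{D})$ has finitely many vertices and, by compactness, no unbounded edges. So it is a finite graph as required in Definition~\ref{def:orientation}.

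\emph{Monotonicity on edges.} An edge $e$ joining $v_1\prec v_2$ is, by construction, $\rho(C)$ together with its endpoints, where $C$ is a connected component of $\pi|_{\mscr{D}}^{-1}(]\pi(v_1),\pi(v_2)[)$. On this open slab there are no critical values of $\pi|_{\partial\mscr{D}}$, and $\pi|_{\mscr{D}}$ is a proper submersion on the interior with $\pi|_{\partial\mscr{D}}$ a submersion too. Ehresmann's fibration theorem for manifolds with boundary then gives
\[
C\cong F\times\,]\pi(v_1),\pi(v_2)[,
\]
with $F$ a connected compact fiber. Therefore each level of $C$ is connected, so $\rho$ maps each level $\pi^{-1}(t)\cap C$ to a single point. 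This means $\widetilde{\pi}|_{\rho(C)}$ is injective, and it is continuous onto the interval. Being an injective continuous map on an arc, it is strictly monotonic.

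\emph{Extrema at degree-one vertices.} Let $v$ be a vertex of degree $1$ with unique edge $e$, and assume without loss of generality that $e$ goes upward, i.e. joins $v$ to some $w$ with $v\prec w$. Any neighborhood of $v$ in $\widetilde{\mscr{D}}$ small enough is contained in $\{v\}\cup e$, because $v$ has no other incident edges. On that neighborhood $\widetilde{\pi}\geq\widetilde{\pi}(v)$, so $v$ is a local minimum. The downward case is symmetric and gives a local maximum.

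The one point needing care is that the neighborhood of $v$ in $\widetilde{\mscr{D}}$ really is just the germ of $e$ at $v$. I would argue this from the triangulability of $\widetilde{\pi}$ (via \cite{Shi00,HS13}), which makes $\widetilde{\mscr{D}}$ a genuine $1$-dimensional CW complex whose local structure at a vertex is a cone on its incident edge-ends. Under that reading, "extrema on vertices of degree $1$" in Definition~\ref{def:orientation} means local extrema, which is exactly what the above gives.

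I expect the main obstacle to be the monotonicity step. One must ensure the fibration argument applies near the ends of the open slab and that distinct components of the slab correspond to distinct edges, so that $\widetilde{\pi}$ never folds back along a single edge. The key input is that no critical values lie strictly between the endpoints, which holds by definition of the edge set.
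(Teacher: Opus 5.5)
\textbf{There is a genuine gap in your third step: you prove the wrong direction of the extremum condition.} If $h$ is strictly monotonic on the unique edge at a degree-one vertex, that vertex is automatically a local extremum. So under your reading the second clause of Definition~\ref{def:orientation} would add nothing to edge-monotonicity. Worse, the theta graph of Figure~\ref{fig:ex1} (two vertices joined by three edges, with $h$ increasing linearly along each) would then admit a good orientation, which the paper explicitly denies.

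The intended condition is the converse: extrema of the orientation occur only at vertices of degree one, i.e.\ no vertex of degree $\geq 2$ is a local maximum or minimum of $\widetilde{\pi}$. Nothing in your proposal rules out, say, a degree-$3$ vertex all of whose edges go upward. This is the direction the paper's proof takes when it pulls extrema of $h$ back to extrema of $\pi$. Your continuity and edge-monotonicity steps otherwise match the paper's argument, with more detail via Ehresmann.

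To close the gap, suppose $\widetilde{\pi}$ has a local minimum at $\rho(C)$, where $C$ is a component of $\pi^{-1}(c)\cap\mscr{D}$, and let $N$ be a neighbourhood of $\rho(C)$ on which $\widetilde{\pi}\geq c$.
\begin{enumerate}
\item By continuity of $\rho$, $\pi\geq c$ on the neighbourhood $\rho^{-1}(N)$ of $C$. Hence every point of $C$ is a local minimum of $\pi|_{\mscr{D}}$.
\item Interior points of $\mscr{D}$ cannot be local minima, since $\pi$ is open there. Boundary points that are regular for $\pi|_{\partial\mscr{D}}$ cannot be either. So $C$ consists of critical points, which are isolated; since $C$ is connected, $C=\{p\}$.
\item In the chart of Lemma~\ref{lem:Morse}, $p$ being a local minimum of $\pi|_{\mscr{D}}$ forces all $\epsilon_i=+1$, with $\mscr{D}$ on the side where $\pi\geq c$. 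By properness, the nearby levels just above $c$ are then single small disks. Hence $\rho(p)$ has exactly one incident edge.
\end{enumerate}
The maximum case is symmetric.

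A smaller correction to your monotonicity step: the slab $]\pi(v_1),\pi(v_2)[$ may well contain critical values coming from other components, and this is not excluded by the definition of edges. What holds is that the component $C$ itself contains no critical points. Since $C$ is closed in $\pi|_{\mscr{D}}^{-1}(]\pi(v_1),\pi(v_2)[)$, the restriction $\pi|_C$ is still proper onto the interval, so your Ehresmann argument goes through applied to $C$ alone.
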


\begin{proof}
	Consider $\rho:\mscr{D}\to \widetilde{\mscr{D}}$ be the quotient map. By definition of the equivalence relation defining $\widetilde{\mscr{D}}$, whenever $\rho(x)=\rho(y)$ for $x,y\in\mscr{D}$, then $\pi(x)=\pi(y)$, hence the universal property of the quotient topology provides a continuous function $h:\widetilde{\mscr{D}}\to \R$ such that $\pi=h\circ\rho$. Therefore, maxima and minima of $\pi$ correspond to maxima and minima of $h$, and hence they appear just at vertices of the graph structure $\mathcal{R}(\mscr{D})$ of $\widetilde{\mscr{D}}$ defined in Subsection \ref{sub:1.1}. Let $b,c$ be two critical values of $\pi|_{\partial\mscr{D}}$ and assume there is an edge $e$ of $\mathcal{R}(\mscr{D})$ such that $e=(v,w)$ and $\pi(v)=b<c=\pi(w)$, that is, there is a connected component $D$ of $\pi^{-1}(]b,c[])\cap \mscr{D}$ whose closure in $\mscr{D}$ intersects the connected component of $\pi^{-1}(b)\cap\mscr{D}$ and $\pi^{-1}(c)\cap\mscr{D}$ containing $v$ and $w$, respectively, that does not contain any critical point of $\pi|_{\partial\mscr{D}}$. Therefore, $d (\pi|_{\partial\mscr{D}})_x\neq 0$ for every $x\in D$ and hence the function $h$ restricted to the realization $e$ of each edge $e$ of $\mathcal{R}(\mscr{D})$ in $\widetilde{\mscr{D}}$ is strictly monotonic.
\end{proof}

\begin{remark}\label{rem:orientation}
	A similar proof of Lemma \ref{cor:orientation} shows that the Reeb space $\widetilde{\partial\mscr{D}}:=\partial\mscr{D}/\sim$ of the Morse function $\pi|_{\partial\mscr{D}}$, where the equivalence relation $\sim$ on $\partial\mscr{D}$ is defined as $x\sim y$ if $\pi(x)=\pi(y)$, induces a good orientation on the Reeb graph $\mathcal{R}(\pi|_{\partial\mscr{D}})$ of $\pi|_{\partial\mscr{D}}$ as a Morse function. However, the two graphs $\mathcal{R}(\mscr{D})$ and $\mathcal{R}(\pi|_{\partial\mscr{D}})$ are in general not isomorphic. Indeed, Example \ref{ex:non-retract} shows that $\mscr{D}$ and $\partial\mscr{D}$ may have a different number of connected components. 
\end{remark}

Observe that Lemma \ref{cor:orientation} above clarifies why some graphs $\Gamma:=(V,E)$ can not be obtained as a Poincaré-Reeb graph of a domain of finite type $\mscr{D}\subset\R^n$, even when requiring that $\Gamma$ has just vertices of degree $1$ or $3$. See Figure \ref{fig:ex1} for an example of a graph (whose vertices have degree $1$ and $3$) which does not admit any good orientation since it has no vertices of degree $1$. See Figure \ref{fig:ex2} for an example of an embedded graph which can be realized as the Poincaré-Reeb graph of a domain of finite type depending on its realization $|\Gamma|\subset\R^n$.

\begin{figure}[h!]
	\tikzset{every picture/.style={line width=0.75pt}} 
	
	\begin{tikzpicture}[x=0.75pt,y=0.75pt,yscale=-0.75,xscale=0.75]
		
		\draw [line width=1.5]    (253,141.5) -- (400,141.5) -- (409,141.5) ;
		\draw [shift={(409,141.5)}, rotate = 0] [color={rgb, 255:red, 0; green, 0; blue, 0 }  ][fill={rgb, 255:red, 0; green, 0; blue, 0 }  ][line width=1.5]      (0, 0) circle [x radius= 4.36, y radius= 4.36]   ;
		\draw [shift={(253,141.5)}, rotate = 0] [color={rgb, 255:red, 0; green, 0; blue, 0 }  ][fill={rgb, 255:red, 0; green, 0; blue, 0 }  ][line width=1.5]      (0, 0) circle [x radius= 4.36, y radius= 4.36]   ;
		\draw  [line width=1.5]  (253,141.5) .. controls (253,98.42) and (287.92,63.5) .. (331,63.5) .. controls (374.08,63.5) and (409,98.42) .. (409,141.5) .. controls (409,184.58) and (374.08,219.5) .. (331,219.5) .. controls (287.92,219.5) and (253,184.58) .. (253,141.5) -- cycle ;
		
		\draw (430,63.4) node [anchor=north west][inner sep=0.75pt]    {$|\Gamma|$};
	\end{tikzpicture}
	\caption{Example of a graph $\Gamma$ which does not admit any good orientation.}\label{fig:ex1}
	\end{figure}
	\vspace{1em}
	\begin{figure}
	\tikzset{every picture/.style={line width=0.75pt}} 
	
	\begin{tikzpicture}[x=0.75pt,y=0.75pt,yscale=-0.75,xscale=0.75]
		
		\draw  [line width=1.5]  (98,136.75) .. controls (98,102.09) and (126.09,74) .. (160.75,74) .. controls (195.41,74) and (223.5,102.09) .. (223.5,136.75) .. controls (223.5,171.41) and (195.41,199.5) .. (160.75,199.5) .. controls (126.09,199.5) and (98,171.41) .. (98,136.75) -- cycle ;
		\draw [line width=1.5]    (35,136.25) -- (98,136.75) ;
		\draw [shift={(98,136.75)}, rotate = 0.45] [color={rgb, 255:red, 0; green, 0; blue, 0 }  ][fill={rgb, 255:red, 0; green, 0; blue, 0 }  ][line width=1.5]      (0, 0) circle [x radius= 4.36, y radius= 4.36]   ;
		\draw [shift={(35,136.25)}, rotate = 0.45] [color={rgb, 255:red, 0; green, 0; blue, 0 }  ][fill={rgb, 255:red, 0; green, 0; blue, 0 }  ][line width=1.5]      (0, 0) circle [x radius= 4.36, y radius= 4.36]   ;
		\draw [line width=1.5]    (223.5,136.75) -- (286.5,137.25) ;
		\draw [shift={(286.5,137.25)}, rotate = 0.45] [color={rgb, 255:red, 0; green, 0; blue, 0 }  ][fill={rgb, 255:red, 0; green, 0; blue, 0 }  ][line width=1.5]      (0, 0) circle [x radius= 4.36, y radius= 4.36]   ;
		\draw [shift={(223.5,136.75)}, rotate = 0.45] [color={rgb, 255:red, 0; green, 0; blue, 0 }  ][fill={rgb, 255:red, 0; green, 0; blue, 0 }  ][line width=1.5]      (0, 0) circle [x radius= 4.36, y radius= 4.36]   ;
		\draw [line width=1.5]    (471,77) .. controls (491,39) and (556,38) .. (571,77) ;
		\draw [shift={(571,77)}, rotate = 68.96] [color={rgb, 255:red, 0; green, 0; blue, 0 }  ][fill={rgb, 255:red, 0; green, 0; blue, 0 }  ][line width=1.5]      (0, 0) circle [x radius= 4.36, y radius= 4.36]   ;
		\draw [shift={(471,77)}, rotate = 297.76] [color={rgb, 255:red, 0; green, 0; blue, 0 }  ][fill={rgb, 255:red, 0; green, 0; blue, 0 }  ][line width=1.5]      (0, 0) circle [x radius= 4.36, y radius= 4.36]   ;
		\draw [line width=1.5]    (410,184) .. controls (414,204) and (429,256) .. (506,225) ;
		\draw [shift={(506,225)}, rotate = 338.07] [color={rgb, 255:red, 0; green, 0; blue, 0 }  ][fill={rgb, 255:red, 0; green, 0; blue, 0 }  ][line width=1.5]      (0, 0) circle [x radius= 4.36, y radius= 4.36]   ;
		\draw [shift={(410,184)}, rotate = 78.69] [color={rgb, 255:red, 0; green, 0; blue, 0 }  ][fill={rgb, 255:red, 0; green, 0; blue, 0 }  ][line width=1.5]      (0, 0) circle [x radius= 4.36, y radius= 4.36]   ;
		\draw [line width=1.5]    (410,184) .. controls (365,141) and (400,58) .. (471,77) ;
		\draw [line width=1.5]    (410,184) .. controls (478,204) and (520,123) .. (471,77) ;
		
		\draw (258,45.4) node [anchor=north west][inner sep=0.75pt]    {$|\Gamma|_1$};
		\draw (598,44.4) node [anchor=north west][inner sep=0.75pt]    {$|\Gamma|_2$};

	\end{tikzpicture}
	\caption{Example of a graph $\Gamma$ which does admit a good orientation induced by $\pi$ depending on the realization $|\Gamma|_1$ or $|\Gamma|_2$.}\label{fig:ex2}
\end{figure}

\begin{notation}
	From now on we will always consider the Poincaré-Reeb graph $\mathcal{R}(\mscr{D})$ of a domain of finite type $\mscr{D}\subset\R^n$ endowed with the good orientation $h$ induced by $\pi$ as in Lemma \ref{cor:orientation}. We will continue writing $\mathcal{R}(\mscr{D})$ instead of $(\mathcal{R}(\mscr{D}),h)$ since the orientation $h$ of $\mathcal{R}(\mscr{D})$ will be always induced by $\pi$.
\end{notation}

Let us describe how to reinterpret Poincaré-Reeb graphs of domains of finite type as Reeb graphs of Morse functions. For a definition of Reeb graph for a Morse function we refer to \cite[Lem.2.1]{Sha06}.

\begin{proposition}\label{prop:top-inv}
	Let $\mscr{D}\subset\R^n$ be a compact domain of finite type. Then there is a $\cinfty$ hypersurface $M\subset\R^{n+1}$ such that:
	\begin{enumerate}[label=\emph{(\roman*)}, ref=(\roman*)]
		\item $M$ intersects $\R^n\times\{0\}$ transversally and $M\cap(\R^n\times\{0\})=\partial\mscr{D}\times\{0\}$.
		\item $\Pi|_M$ is a Morse function whose Reeb graph $\mathcal{R}(\Pi|_M)$ is isomorphic to $\mathcal{R}(\mscr{D})$, where $\Pi:\R^{n+1}\to\R$ denotes the projection onto the first coordinate.
	\end{enumerate}
\end{proposition}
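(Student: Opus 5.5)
We construct $M$ as the \emph{double} of $\mscr{D}$. Choose $h\in\cinfty(\R^n)$ with $\mathcal{Z}_{\R^n}(h)=\partial\mscr{D}$, $\nabla h\neq\overline{0}$ on $\partial\mscr{D}$ and $\mscr{D}=\{h\geq 0\}$, as in the proof of Lemma \ref{lem:Morse}. Then set
\[
M:=\{(x,t)\in\R^n\times\R\,|\,t^2=h(x)\}.
\]
Since $\mscr{D}$ is compact, $M$ is compact, and $M$ is the zero set of $H(x,t):=t^2-h(x)$. Its gradient is $\nabla H=(-\nabla h(x),2t)$. This vanishes only if $t=0$ and $\nabla h(x)=\overline{0}$, which forces $x\in\partial\mscr{D}$ with $\nabla h(x)=\overline{0}$, a contradiction. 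So $M$ is a smooth hypersurface of $\R^{n+1}$.

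For (i): $M\cap\{t=0\}=\partial\mscr{D}\times\{0\}$ by construction. Transversality holds because at such points $\nabla H=(-\nabla h,0)$ is not parallel to $e_{n+1}$, so $T_pM$ contains $e_{n+1}$.

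For the Morse property in (ii), apply Remark \ref{rem:crit} to $H$. A point $(x,t)\in M$ is critical for $\Pi|_M$ iff $\partial h/\partial x_i(x)=0$ for $i=2,\dots,n$ and $2t=0$. Hence the critical points are exactly $(p,0)$ with $p$ a critical point of $\pi|_{\partial\mscr{D}}$, and the critical values coincide. To check nondegeneracy, use the chart of Lemma \ref{lem:Morse}, in which $\mscr{D}$ is locally $\{y_1\geq 0\}$ and $\pi\circ\Phi=\pi(p)+y_1+\sum\epsilon_iy_i^2$. We may take $h\circ\Phi=y_1\cdot u$ with $u>0$ near $\overline{0}$; after rescaling the coordinate $y_1$ we can arrange $h\circ\Phi=y_1$ up to an error we handle by reparametrizing. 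Then $M$ is locally $\{t^2=y_1\}$, parametrized by $(t,y')$, and $\Pi|_M=\pi(p)+t^2+\sum\epsilon_iy_i^2$. This is nondegenerate, with index equal to the index of $p$ (the new direction contributes $+t^2$).

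For the Reeb graph isomorphism, consider the map $\sigma:M\to\mscr{D}$, $(x,t)\mapsto x$. It is continuous, surjective and closed, and it commutes with the projections: $\pi\circ\sigma=\Pi|_M$. I claim it induces a bijection between connected components of the level sets $\Pi|_M^{-1}(z)$ and of $\pi|_{\mscr{D}}^{-1}(z)$.

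Fix $z$. The level $\Pi|_M^{-1}(z)$ is the double of $D_z:=\pi^{-1}(z)\cap\mscr{D}$, obtained by gluing two copies (the sheets $t=\pm\sqrt{h}$) along $D_z\cap\partial\mscr{D}$. The map $\sigma$ restricted to this level is the folding map onto $D_z$. The key point is that every connected component of $D_z$ meets $\partial\mscr{D}$, since $D_z$ is compact in the hyperplane $\pi^{-1}(z)\cong\R^{n-1}$ and $n\geq2$. Therefore the two sheets over each component are glued, preimages of components are connected, and $\sigma$ descends to a homeomorphism $\widetilde{M}_{\Pi}\to\widetilde{\mscr{D}}$ compatible with the projections. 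This homeomorphism sends the vertices (critical points) to the vertices, by the identification above, and hence is an isomorphism of graphs.

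The main obstacle is this level-set component argument at critical levels. The level there may be singular, and one must make sure connectedness through the gluing still works. I would handle it exactly by the compactness observation above (which is independent of regularity), together with the fact that $\sigma$ is proper and closed, so the quotient topologies agree.
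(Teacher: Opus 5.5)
Your proposal is correct and follows essentially the paper's proof. It uses the same double $M=\{t^2=h(x)\}$ and identifies the critical points through Remark \ref{rem:crit}. It describes each level of $\Pi|_M$ as the two graphs $t=\pm\sqrt{h}$ glued over $\pi^{-1}(z)\cap\mscr{D}$. You add a justification of the fact, used only tacitly in the paper, that every component of $\pi^{-1}(z)\cap\mscr{D}$ meets $\partial\mscr{D}$. You also get the homeomorphism of Reeb spaces from a continuous bijection, where the paper uses the explicit inverse induced by $x\mapsto(x,\sqrt{h(x)})$.

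One side remark is wrong, though the statement does not need it. At critical points where $\mscr{D}$ lies locally in $\{x_1\leq\pi(p)\}$, the $t$-direction contributes $-t^2$, so the index goes up by one (the chart you quote then has $\mscr{D}=\{y_1\leq 0\}$). For example, the maximum on the boundary of the unit ball becomes an index-$n$ point of $M=\mathbb{S}^n$. Nondegeneracy still holds for either sign.
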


\begin{proof}
	Let $h\in\cinfty(\R^n)$ such that $\partial\mscr{D}=\mathcal{Z}_{\R^n}(h)$ and $\nabla h(x)\neq \overline{0}$ for every $x\in\partial\mscr{D}$. Then, up to change the sign of $h$, we have that $\mscr{D}=\{x\in\R^n\,|\,h(x)\geq 0\}$. Consider $M\subset\R^{n+1}$ defined as $\{(x,y)\in\R^n\times\R\,|\,y^2=h(x)\}=\mathcal{Z}_{\R^{n+1}}(y^2-h(x))$. Observe that $g(x,y):=y^2-h(x)\in\cinfty(\R^{n+1})$ and $\nabla g(x,y)\neq (\overline{0},0)$ for every $(x,y)\in M$. Indeed, let $(x,y)\in M$, if $h(x)>0$, then $\frac{\partial g}{\partial y}(x,y)=2y\neq 0$, if $h(x)=0$, then $\frac{\partial g}{\partial y}(x,y)=0$ but $\nabla h(x)\neq \overline{0}$, hence in both cases $\nabla g(x,y)\neq (\overline{0},0)$. This shows that $M$ is a compact $\cinfty$ hypersurface of $\R^{n+1}$, as $\partial\mscr{D}$ is a compact $\cinfty$ hypersurface of $\R^{n}$.
	
	Let $\Pi:\R^{n+1}\to\R$ be the projection map onto the first coordinate, that is, $\Pi(x,y)=\pi(x)=x_1$ where $x=(x_1,\dots,x_n)$. Observe that the map $\Pi|_{M}:M\to\R$ is a Morse function for $M$.  Indeed, $(x,y)\in M$ is a critical point of the projection $\Pi|_{M}$ if and only if the only non-null entry of the gradient $\nabla g(x,y)$  is $\frac{\partial g}{\partial x_1}(x,y)$ by Remark \ref{rem:crit}. In particular, since $\frac{\partial g}{\partial y}(x,y)=2y$, we get that each critical point of $\Pi|_{M}$ is at level $y=0$. Observe that, by definition of $g$, we have $\frac{\partial g}{\partial x_i}(x,y)=\frac{\partial h}{\partial x_i}(x)$ for every $i=1,\dots,n$. Hence, we conclude that the critical points of $\Pi|_{M}$ do coincide with the critical points of $\pi|_{\partial\mscr{D}}$ since again a point $x\in\mscr{D}$ is a critical point for $\pi|_{\partial\mscr{D}}$ if and only if the only if the non-null entry of the gradient $\nabla h(x,y)$  is $\frac{\partial h}{\partial x_1}(x)$. Observe that, by definition of $g(x,y)=y^2-h(x)$, each critical point $(x,0)$ is nondegenerate as $x$ is a nondegenerate critical point of $\pi|_{\partial\mscr{D}}$. In addition, for every $z\in\R$ the number of connected components of $\pi^{-1}(z)\cap \mscr{D}$ coincides with the number of connected components of $\Pi^{-1}(z)\cap M$. Indeed, $\Pi^{-1}(z)\cap M=\{(x,y)\in\mscr{D}\times\R\,|\,\pi(x)=z, |y|=\sqrt{h(x)}\}$, that is, it coincides with the union of the graphs of two continuous functions $f_1(x)=\sqrt{h(x)}$ and $f_2(x)=-\sqrt{h(x)}$ on $\mscr{D}\cap\pi^{-1}(z)$ vanishing at $\partial\mscr{D}\cap\pi^{-1}(z)$, therefore if $D$ is a connected component of $\mscr{D}\cap\pi^{-1}(z)$, then $\{(x,y)\in D \times\R\,|\,|y|=\sqrt{h(x)}\}$ is connected. Denote by $D_1,\dots,D_\ell$ the connected components of $\mscr{D}\cap\pi^{-1}(z)$ and denote by $C_i:=\{(x,y)\in D_i \times\R\,|\,|y|=\sqrt{h(x)}\}$ for $i\in\{1,\dots,\ell\}$. By definition of $M$, we have that $M\cap\Pi^{-1}(z)=\bigcup_{i=1}^\ell C_i$. Moreover, each $C_i$ is connected as shown before and $C_i\cap C_j\neq\varnothing$ if and only if $D_i\cap D_j\neq\varnothing$ as $C_i:=\{(x,y)\in D_i \times\R\,|\,|y|=\sqrt{h(x)}\}$, therefore $C_i\cap C_j\neq\varnothing$ for every $i,j\in\{1,\dots,\ell\}$ with $i\neq j$ as $D_1,\dots,D_\ell$ are the connected components of $\mscr{D}\cap\pi^{-1}(z)$. This shows that, the Reeb graph structure $\mathcal{R}(\Pi|_M)$ on $\widetilde{M}$ induced by $\Pi|_{M}$ is isomorphic to the Poincaré-Reeb graph structure $\mathcal{R}(\mscr{D})$ on $\mscr{D}$. 
	
	We show that the Reeb space $\widetilde{M}$ of $\Pi|_{M}$ (as a Morse function) is homeomorphic to $\widetilde{\mscr{D}}$ inducing an isomorphism between the oriented graph structures $\mathcal{R}(\Pi|_M)$ on $\widetilde{M}$ and $\mathcal{R}(\mscr{D})$ on $\widetilde{\mscr{D}}$. Denote by $\Pi_n:\R^{n+1}\to\R^n$ the projection onto the first $n$-coordinates defined as $\Pi_n(x,y)=x$. Observe that $\Pi=\pi\circ\Pi_n$. Consider the following commutative diagram:
	\begin{center}
		\begin{tikzcd} 
			M \arrow{d}{\rho'} \arrow{r}{\Pi_n} & \mscr{D} \arrow{d}{\rho}  \arrow[bend left=20]{l}{\gamma} \arrow{r}{\pi} & \R \\ 
			\widetilde{M} \arrow{r}{\phi} & \widetilde{\mscr{D}} \arrow{ru}{h_{\pi}}
		\end{tikzcd}
	\end{center}
	where $\rho':M\to \widetilde{M}$ is the quotient map and $h_{\pi}:\widetilde{\mscr{D}}\to \R$ is the orientation of Lemma \ref{cor:orientation} induced by $\pi$. By applying the universal property of the quotient topology to $\rho\circ\Pi_n$ we get a unique continuous function $\phi:\widetilde{M}\to \widetilde{\mscr{D}}$ that makes the diagram commute. Moreover, as $\rho\circ\Pi_n$ is surjective, then $\phi\circ\rho'$ is surjective too and, by definition of $\rho':M\to \widetilde{M}$, $\phi$ is also injective. Consider the continuous function $\gamma:\mscr{D}\to M$ defined as $\gamma(x):=(x,\sqrt{h(x)})$. Then, the universal property of the quotient topology applied to $\rho'\circ \gamma$ gives a unique continuous function $\phi':\widetilde{\mscr{D}}\to \widetilde{M}$ such that $\rho'\circ\gamma=\phi'\circ\rho$. Observe that $\phi\circ \phi'=\id_{\widetilde{\mscr{D}}}=\phi'\circ \phi$, therefore $\phi:\widetilde{M}\to \widetilde{\mscr{D}}$ is a homeomorphism and, as shown before, it preserves the graphs structures $\mathcal{R}(\Pi|_M)$ on $\widetilde{M}$ and $\mathcal{R}(\mscr{D})$ on $\mscr{D}$. Finally, the universal property of the quotient topology applied to $\Pi=\pi\circ\Pi_n$ gives that $h_{\pi}\circ\phi:\widetilde{M}\to\R$ is the good orientation on $\mathcal{R}(\Pi|_{M})$ induced by $\Pi|_{M}$ as in Remark \ref{rem:orientation}. Therefore, the two oriented graphs $\mathcal{R}(\mscr{D})$ and $\mathcal{R}(\Pi|_{M})$ are isomorphic, as desired.
\end{proof}

Observe that $M\subset\R^{n+1}$ defined in Proposition \ref{prop:top-inv} is just the compact hypersurface obtained by identifying the boundary of two copies of $\mscr{D}$. That is the reason why we will refer to $M\subset\R^{n+1}$ as the \emph{double} of $\mscr{D}\subset\R^n$. Moreover, as a direct consequence of Proposition \ref{prop:top-inv} and \cite[Thm.5.1]{Gel18}, we get the following upper-bound on the topology of the Poincaré-Reeb graph of a domain of finite type.

\begin{corollary}\label{cor:upper}
	Let $\mscr{D}\subset\R^n$ be a compact domain of finite type and denote by $M\subset\R^{n+1}$ its double. Then,
	\[
	b_1(\widetilde{\mscr{D}})\leq\textnormal{corank}(\pi_1(M)),
	\]
	where $b_1(\widetilde{\mscr{D}})$ denotes the first Betti-number of $\widetilde{\mscr{D}}$ and $\pi_1(M)$ is the fundamental group of $M$. In particular, if $\mscr{D}$ is diffeomorphic to a disk, then $\mathcal{R}(\mscr{D})$ is a tree.
\end{corollary}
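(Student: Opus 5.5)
The plan is to pass through the double. By Proposition \ref{prop:top-inv}, the double $M\subset\R^{n+1}$ of $\mscr{D}$ is a compact $\cinfty$ hypersurface, and $\Pi|_M$ is a Morse function. Moreover, the quotient $\widetilde{M}$ is homeomorphic to $\widetilde{\mscr{D}}$, and the Reeb graph $\mathcal{R}(\Pi|_M)$ is isomorphic, as an oriented graph, to $\mathcal{R}(\mscr{D})$. In particular $b_1(\widetilde{\mscr{D}})=b_1(\widetilde{M})$, where $\widetilde{M}$ is the Reeb space of a Morse function on a closed manifold. Gelbukh's theorem \cite[Thm.5.1]{Gel18} states that for a smooth function with finitely many critical points on a closed manifold, the first Betti number of the Reeb graph is bounded by the corank of the fundamental group. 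The number of critical points is finite here by Remark \ref{rem:connected}. Applying that theorem to $\Pi|_M$ gives $b_1(\widetilde{M})\le\textnormal{corank}(\pi_1(M))$, which is the inequality. If $M$ is disconnected, I apply the result componentwise. Both sides are additive over components: the corank of a free product is at least the sum of the coranks, and the Reeb graph splits as the disjoint union of the components' Reeb graphs.

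For the \emph{in particular}, assume $\mscr{D}$ is diffeomorphic to the closed disk $\mathbb{D}^n$. Then the double $M$ is obtained by gluing two copies of $\mathbb{D}^n$ along their boundary $\mathbb{S}^{n-1}$ by the identity. Concretely, $M=\{y^2=h(x)\}$ is the union of the graphs of $\pm\sqrt{h}$ over $\mscr{D}$, so $M$ is homeomorphic to $\mathbb{S}^n$. Since $n\ge 2$, $\pi_1(M)$ is trivial and its corank is $0$. Hence $b_1(\widetilde{\mscr{D}})=0$. The space $\widetilde{\mscr{D}}$ is connected, being the continuous image of the connected $\mscr{D}$ under $\rho$. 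Thus $\mathcal{R}(\mscr{D})$ is a connected graph without cycles, i.e.\ a tree.

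The main obstacle is checking that the identification in Proposition \ref{prop:top-inv} really gives a homeomorphism $\widetilde{M}\cong\widetilde{\mscr{D}}$ and not just a matching of graph structures, since Betti numbers are taken of the topological space. This is provided by the map $\phi$ constructed in that proof together with its inverse $\phi'$. A second point is making sure the hypotheses of \cite[Thm.5.1]{Gel18} (closed manifold, finitely many critical points or Morse) are met by $\Pi|_M$. A further point is justifying that $M$ is a topological sphere when $\mscr{D}$ is a disk. For the latter I would argue directly: the map $\mscr{D}\times\{\pm1\}\to M$, $(x,\pm1)\mapsto(x,\pm\sqrt{h(x)})$, descends to a continuous bijection from the quotient $\mscr{D}\sqcup_{\partial}\mscr{D}$ onto $M$. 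This bijection is a homeomorphism by compactness, and the quotient is a sphere.
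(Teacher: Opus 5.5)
Your proposal is correct and follows the paper's own proof: pass to the double $M$ via Proposition~\ref{prop:top-inv}, apply \cite[Thm.5.1]{Gel18} to the Morse function $\Pi|_M$, and in the disk case use that $M$ is a topological $\mathbb{S}^n$ with $n\geq 2$ together with connectedness of $\widetilde{\mscr{D}}$. Two of your steps are details the paper leaves implicit and do not change the argument: the componentwise treatment of a disconnected $M$ (reading $\pi_1(M)$ as the free product of the components' groups), and the compactness argument identifying $M$ with $\mscr{D}\sqcup_{\partial}\mscr{D}$.
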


\begin{proof}
	By Proposition \ref{prop:top-inv}, we have that $\mathcal{R}(\Pi|_{M})\cong\mathcal{R}(\mscr{D})$, therefore \cite[Thm.5.1]{Gel18} ensures that $b_1(\widetilde{\mscr{D}})=b_1(|\mathcal{R}(\Pi|_{M})|)\leq\textnormal{corank}(\pi_1(M))$. Assume in addition that $\mscr{D}\subset\R^n$ is homeomorphic to a (closed) disk $\mathbb{D}^n$, then the double $M\subset\R^{n+1}$ of $\mscr{D}\subset\R^n$ is homeomorphic to the sphere $\mathbb{S}^{n}$, for $n\geq 2$. Therefore, $\pi_1(M)=0$ and hence an application of the above inequality ensures that $b_1(\widetilde{\mscr{D}})=0$, that is, $\mathcal{R}(\mscr{D})$ is a tree, as $\mscr{D}$ homeomorphic to $\mathbb{D}^n$ implies that $\mscr{D}$, and hence $\widetilde{\mscr{D}}$, are connected as well.
\end{proof}

Observe that in the case $n=2$, the implication of Corollary \ref{cor:upper}: if $\mscr{D}\subset\R^2$ is diffeomorphic to a disk, then $\mathcal{R}(\mscr{D})$ is a tree, follows as a direct consequence of Proposition \ref{prop:hom-equiv}. Indeed, if $n=2$, then the fiber of $\pi|_{\mscr{D}}$ at any point $y\in \R$ is a finite union of intervals, and therefore each connected component of $\pi|_{\mscr{D}}^{-1}(y)$ is contractible. This shows that Proposition \ref{prop:hom-equiv} applies and hence $\mscr{D}\subset\R^2$ is homoropically equivalent to $\widetilde{\mscr{D}}$. As $\mscr{D}\subset\R^2$ is diffeomorphic to a disk, then $\widetilde{\mscr{D}}$ is connected and $b_1(\widetilde{\mscr{D}})=b_1(\mscr{D})=0$, that is, $\mathcal{R}(\mscr{D})$ is a tree. This was the original way of proving the latter property in \cite[Prop.2.24]{BPS24}.

Let us generalize the notion of vertically equivalence introduced in \cite[Def.2.28]{BPS24} for domains of finite type $\mscr{D}\subset\R^n$.

\begin{definition}\label{def:v-equiv}
	Let $\mscr{D},\mscr{D}'\subset\R^n$ be domains of finite type equipped by the orientation induced by the orientation of $\R^n$. We say that $\mscr{D}$ and $\mscr{D}'$ are \emph{vertically equivalent} and we write $\mscr{D}\sim_v \mscr{D}'$ if there are orientation preserving diffeomorphisms $\phi:\mscr{D}\to\mscr{D}'$ and $\psi:\R\to\R$ such that $\psi\circ\pi|_{\mscr{D}}=\pi|_{\mscr{D}'}\circ\phi$.
\end{definition}

The next result shows which topological information is encoded by the Poincaré-Reeb graph $\mathcal{R}(\mscr{D})$ of a domain of finite type $\mscr{D}\subset\R^n$.

\begin{proposition}\label{prop:equiv}
	Let $\mscr{D},\mscr{D}'\subset\R^n$ be domains of finite type. If $\mscr{D}\sim_{v} \mscr{D}'$, then $\mathcal{R}(\mscr{D})\cong \mathcal{R}(\mscr{D}')$.
\end{proposition}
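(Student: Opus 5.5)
The plan is to show that $\phi$ descends to a homeomorphism of the Poincaré-Reeb spaces which respects both the graph structures and the orientations induced by $\pi$. Let $\phi:\mscr{D}\to\mscr{D}'$ and $\psi:\R\to\R$ be the orientation preserving diffeomorphisms with $\psi\circ\pi|_{\mscr{D}}=\pi|_{\mscr{D}'}\circ\phi$. Note that $\psi$ is strictly increasing.

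\textbf{Step 1: $\phi$ maps fibres to fibres.} The key observation is that for every $z\in\R$ we have
\[
\phi\big(\pi^{-1}(z)\cap\mscr{D}\big)=\pi^{-1}(\psi(z))\cap\mscr{D}'.
\]
The inclusion ``$\subseteq$'' is immediate from the commutation relation. The reverse inclusion follows by applying the same argument to $\phi^{-1}$ and $\psi^{-1}$, which satisfy $\psi^{-1}\circ\pi|_{\mscr{D}'}=\pi|_{\mscr{D}}\circ\phi^{-1}$. Since $\phi$ restricts to a homeomorphism between these fibres, it maps their connected components bijectively onto one another. Hence $x\sim y$ in $\mscr{D}$ if and only if $\phi(x)\sim\phi(y)$ in $\mscr{D}'$.

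\textbf{Step 2: induced homeomorphism.} By the universal property of the quotient topology applied to $\rho'\circ\phi$ and to $\rho\circ\phi^{-1}$, we obtain continuous maps $\widetilde{\phi}:\widetilde{\mscr{D}}\to\widetilde{\mscr{D}'}$ and $\widetilde{\phi^{-1}}:\widetilde{\mscr{D}'}\to\widetilde{\mscr{D}}$. These are mutually inverse, exactly as in the proof of Proposition \ref{prop:top-inv}. Moreover, with $h,h'$ the orientations of Lemma \ref{cor:orientation}, we have $h'\circ\widetilde{\phi}=\psi\circ h$, because $h'\circ\rho'\circ\phi=\pi\circ\phi=\psi\circ\pi=\psi\circ h\circ\rho$ and $\rho$ is surjective. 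Since $\psi$ is increasing, the orderings $\prec_h$ and $\prec_{h'\circ\widetilde{\phi}}$ coincide.

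\textbf{Step 3: $\widetilde{\phi}$ respects the graph structures.} This is the main point. Since $\phi$ restricts to a diffeomorphism $\partial\mscr{D}\to\partial\mscr{D}'$ and $\psi$ is a diffeomorphism, we have $\psi\circ\pi|_{\partial\mscr{D}}=\pi|_{\partial\mscr{D}'}\circ\phi|_{\partial\mscr{D}}$. Differentiating this relation, and using $\psi'\neq0$, shows that $p$ is a critical point of $\pi|_{\partial\mscr{D}}$ if and only if $\phi(p)$ is a critical point of $\pi|_{\partial\mscr{D}'}$. Consequently $\phi$ induces a bijection $V\to V'$ on vertices, with critical values related by $\psi$.

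For edges, I use the description of $E$ in terms of the connected components of $\pi|_{\mscr{D}}^{-1}(]\pi(v_1),\pi(v_2)[)$. By Step 1,
\[
\phi\big(\pi|_{\mscr{D}}^{-1}(]a,b[)\big)=\pi|_{\mscr{D}'}^{-1}(]\psi(a),\psi(b)[).
\]
So $\phi$ gives a bijection between the relevant connected components. Because $\phi$ is a homeomorphism, this bijection preserves closures and the incidence with the components of the critical levels containing $v_1$ and $v_2$. The unbounded-edge cases are handled identically, using $\psi(\pm\infty)=\pm\infty$.

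Hence $\widetilde{\phi}$ is an isomorphism of graphs which is compatible with the orientations. Therefore $\mathcal{R}(\mscr{D})\cong\mathcal{R}(\mscr{D}')$, even as ordered graphs.

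The only step needing care is the critical-point correspondence in Step 3. It relies on $\phi$ mapping $\partial\mscr{D}$ onto $\partial\mscr{D}'$, which holds for any diffeomorphism of manifolds with boundary, together with $\psi'>0$.
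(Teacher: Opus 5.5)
Your proof is correct and follows essentially the same route as the paper. You descend $\phi$ to a homeomorphism $\widetilde{\phi}:\widetilde{\mscr{D}}\to\widetilde{\mscr{D}}'$ via the universal property of the quotient, use $\psi\circ\pi|_{\partial\mscr{D}}=\pi|_{\partial\mscr{D}'}\circ\phi|_{\partial\mscr{D}}$ to match vertices and edges, and deduce equivalence of orientations from $h'\circ\widetilde{\phi}=\psi\circ h$ with $\psi$ increasing. You also fill in details the paper only asserts: the fibre identity via $\phi^{-1}$, the explicit edge correspondence, and the correct composition order $\psi\circ h$, which the paper writes as $h\circ\psi$.
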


\begin{proof}
	Let $\phi:\mscr{D}\to\mscr{D}'$ and $\psi:\R\to\R$ be diffeomorphisms satisfying Definition \ref{def:v-equiv}. Consider the following commutative diagram:
	\begin{center}
		\begin{tikzcd} 
			\mscr{D}  \arrow[bend right=50, swap]{dd}{\pi|_{\mscr{D}}} \arrow{d}{\rho} \arrow{r}{\phi} & \mscr{D}' \arrow[bend left=50]{dd}{\pi|_{\mscr{D}'}} \arrow{d}{\rho'} \\
			\widetilde{\mscr{D}} \arrow{r}{\widetilde{\phi}} \arrow{d}{h} &  \widetilde{\mscr{D}}' \arrow{d}{h'}\\
			\R  \arrow{r}{\psi} & \R
		\end{tikzcd}
	\end{center}
		where $h:\widetilde{\mscr{D}}\to \R$ and $h':\widetilde{\mscr{D}}'\to \R$ are the good orientations of Lemma \ref{cor:orientation} induced by $\pi|_{\mscr{D}}$ and $\pi|_{\mscr{D}'}$ on $\mathcal{R}(\mscr{D})$ and $\mathcal{R}(\mscr{D}')$, respectively. Let $x,y\in\mscr{D}$ such that $(\rho'\circ\phi)(x)=(\rho'\circ\phi)(y)$. By definition of $\rho'$, we have that $\rho'(\phi(x))=\rho'(\phi(y))$ if and only if $\pi|_{\mscr{D'}}(\phi(x))=\pi|_{\mscr{D}'}(\phi(y))$ and $\phi(x)$ and $\phi(y)$ belong to the same connected component of $\pi^{-1}(\pi(\phi(x)))\cap\mscr{D}'$. As $\pi|_{\mscr{D}'}\circ\phi=\psi\circ\pi|_{\mscr{D}}$, the previous property is satisfied if and only if $\pi(x)=\pi(y)$ and $x$ and $y$ belongs to the same connected component of $\pi^{-1}(\pi(x))\cap\mscr{D}$, that is, if and only if $\rho(x)=\rho(y)$. Therefore, the universal property of the quotient topology applied to $\rho'\circ\phi:\mscr{D}\to\R$ gives a unique continuous function $\widetilde{\phi}:\mscr{D}\to\mscr{D}'$ such that $\widetilde{\phi}\circ\rho=\rho'\circ\phi$. Moreover, $\widetilde{\phi}$ is surjective as $\rho'\circ\phi$ is and $\widetilde{\phi}$ is injective as the $\rho'(\phi(x))=\rho'(\phi(y))$ if and only if $\rho(x)=\rho(y)$. A similar application of the universal property of the quotient topology to $\rho\circ\phi^{-1}:\mscr{D}'\to\widetilde{\mscr{D}}$ provides that $\widetilde{\phi}^{-1}$ is continuous and hence $\widetilde{\phi}$ is a homeomorphism.
	
	Let us show that $\widetilde{\phi}:\widetilde{\mscr{D}}\to\widetilde{\mscr{D}}'$ as above induces an isomorphism between the oriented graphs $\mathcal{R}(\mscr{D})$ and $\mathcal{R}(\mscr{D}')$. Let $v_1,\dots,v_k$ be the critical points of $\pi|_{\partial\mscr{D}}$, then $\phi(v_1),\dots,\phi(v_k)$ are the critical points of $\pi|_{\partial\mscr{D}'}$ as $\phi|_{\partial{\mscr{D}}}:\partial\mscr{D}\to\partial\mscr{D}'$ and $\psi:\R\to \R$ are diffeomorphisms satisfying $\pi\circ\phi|_{\partial\mscr{D}}=\psi\circ\pi$. Therefore, $\widetilde{\phi}$ sends the vertices of $\mathcal{R}(\mscr{D})$ exactly to the vertices of $\mathcal{R}(\mscr{D}')$. Moreover, $\widetilde{\phi}$ preserves the graphs structures $\mathcal{R}(\mscr{D})$ of $\mscr{D}$ and $\mathcal{R}(\mscr{D}')$ of $\mscr{D}'$ as $\pi|_{\mscr{D}'}\circ\phi=\psi\circ\pi|_{\mscr{D}}$. This shows that $\widetilde{\phi}$ induces an isomorphism of graphs between $\mathcal{R}(\mscr{D})$ and $\mathcal{R}(\mscr{D}')$. Let us show that the good orientations $h:\widetilde{\mscr{D}}\to \R$ and $h':\widetilde{\mscr{D}}'\to \R$ are equivalent. Observe that, as $\psi:\R\to\R$ is an orientation preserving diffeomorphism, then $\psi(\pi|_{\mscr{D}}(v_i))<\psi(\pi|_{\mscr{D}}(v_j))$ for $i,j\in\{1,\dots,k\}$ if and only if $\pi|_{\mscr{D}}(v_i)<\pi|_{\mscr{D}}(v_j)$, therefore $h\circ\psi:\widetilde{\mscr{D}}\to \R$ is a good orientation on $\mathcal{R}(\mscr{D})$ equivalent to $h$ according to Definition \ref{def:equiv-orientations}. Observe that, by commutativity of the above diagram, we have that $h\circ\psi=h'\circ\widetilde{\phi}$, showing that the two oriented graphs $\mathcal{R}(\mscr{D})$ and $\mathcal{R}(\mscr{D}')$ are isomorphic.
\end{proof}

We will discuss  in Remark \ref{rem:construction} why the converse of Proposition \ref{prop:equiv} does not hold in general, even in the case of stable domains of finite type we are going to introduce in the next subsection.


\vspace{0.5em}

\subsection{Stable domains of finite type}

For the purpose of this paper we are mostly interested in those domains of finite type $\mscr{D}\subset\R^n$ which are stable by small perturbations, more precisely let us recall the classical notion of stable $\cinfty$ functions.

\begin{definition}\label{def:stable-f}
	Let $M$ be a $\cinfty$ manifold. A function $f\in\cinfty(M)$ is \emph{stable} if there is a neighborhood $\mathcal{U}(f)$ of $f$ in $\cinfty(M)$ such that every $g\in\mathcal{U}(f)$ is conjugate to $f$, i.e. there are diffeomorphisms $\phi:M\to M$ and $\psi:\R\to \R$ such that $\psi\circ f=g\circ\phi$.
\end{definition}

Observe that stable functions are Morse with distinct critical values. Indeed, suppose $f\in\cinfty(M)$ is stable and let $\mathcal{U}(f)$ be a neighborhood of $f$ in $\cinfty(M)$ satisfying Definition \ref{def:stable-f}. Since the set of Morse functions with distinct critical values is open and dense in $\cinfty(M)$, there is such a Morse function $g\in\mathcal{U}(f)$. Since the distinctness of the critical values is invariant in the conjugacy class of $f$ as in Definition \ref{def:stable-f}, we obtain that $f$ is a Morse function with distinct critical values as well. On the other hand, if $M$ is compact, the notions of Morse function with distinct critical values and stable function do coincide, see \cite[Ch.\,III,\,Prop.\,2.2]{GG73}. This is a key property for our purposes.

\begin{definition}
	Let $\mscr{D}\subset\R^n$ be a domain of finite type. We say that $\mscr{D}\subset\R^n$ is \emph{stable} if $\pi|_{\partial\mscr{D}}$ is stable. 
\end{definition}

Observe that when $n=2$ a domain of finite type $\mscr{D}\subset\R^n$ is stable if and only if its Poincaré-Reeb graph $\Gamma(\mscr{D})$ has only vertices of degree $1$ and $3$. The graphs whose vertices have degree 1 or 3 are called \emph{generic} in \cite{BPS24}. This characterization holds just when $n=2$, indeed there are stable domains of finite type $\mscr{D}\subset\R^n$ for $n>2$ whose Poincaré-Reeb graph $\mathcal{R}(\mscr{D})$ has vertices of degree $2$, see Example \ref{ex:non-retract}. However, as a direct consequence of Proposition \ref{prop:top-inv}, we obtain the following description for Poincaré-Reeb graphs of stable domains of finite type $\mscr{D}\subset\R^n$. 

\begin{proposition}
	Let $\mscr{D}\subset\R^n$ be a compact stable domain of finite type. Then its Poincaré-Reeb graph $\mathcal{R}(\mscr{D})$ only has vertices of degree $\leq 3$.
\end{proposition}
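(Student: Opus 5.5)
The plan is to transfer the problem to the double. By Proposition \ref{prop:top-inv}, the double $M\subset\R^{n+1}$ of $\mscr{D}$ is a compact $\cinfty$ hypersurface, and $\Pi|_M$ is a Morse function with $\mathcal{R}(\Pi|_M)\cong\mathcal{R}(\mscr{D})$ as oriented graphs. Moreover, the proof of that proposition shows two further facts: the critical points of $\Pi|_M$ are exactly the points $(v,0)$ with $v$ critical for $\pi|_{\partial\mscr{D}}$, and $\Pi((v,0))=\pi(v)$. Since $\mscr{D}$ is stable, the critical values of $\pi|_{\partial\mscr{D}}$ are pairwise distinct, so $\Pi|_M$ is a Morse function on a compact manifold with distinct critical values. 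It therefore suffices to show that the Reeb graph of such a function has only vertices of degree $\leq 3$.

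To bound the degrees, fix a critical point $p$ of $\Pi|_M$ with critical value $c$. It is the only critical point on the level $\Pi|_M^{-1}(c)$. Choose $\varepsilon>0$ so small that $[c-\varepsilon,c+\varepsilon]$ contains no other critical value, and let $K$ be the connected component of $\Pi|_M^{-1}([c-\varepsilon,c+\varepsilon])$ containing $p$. The edges at the vertex $p$ correspond bijectively to the connected components of $K\cap\Pi^{-1}(c-\varepsilon)$ and of $K\cap\Pi^{-1}(c+\varepsilon)$. By Morse theory, $K$ is obtained from a collar $K_-\times[0,1]$, where $K_-:=K\cap\Pi^{-1}(c-\varepsilon)$, by attaching one handle of index $k$ along $\mathbb{S}^{k-1}\times\mathbb{D}^{n-k}$.
\begin{itemize}
\item If $k=0$, then $K_-=\varnothing$ and the top level is a sphere, giving degree $1$. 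The case $k=n$ is symmetric.
\item If $0<k<n$, attaching the handle changes the number of components of the level set by at most one: the attaching sphere $\mathbb{S}^{k-1}$ meets at most two components when $k=1$, and at most one when $k\geq 2$. Dually, the same holds for the descending side.
\end{itemize}

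The main obstacle is the count for $0<k<n$. The key point is that the region $K$ is connected. If $k=1$, either two components below merge into one above, or one component below stays connected or splits into at most two above. In every case the total number of components below plus above is at most $3$. If $2\leq k\leq n-2$, both the attaching sphere and the belt sphere are connected, so there is one component below and one above, giving degree $2$. The case $k=n-1$ is dual to $k=1$. I would make this precise by writing $K\simeq K_-\cup_{\mathbb{S}^{k-1}\times \mathbb{D}^{n-k}} (\mathbb{D}^k\times\mathbb{D}^{n-k})$ and tracking $\pi_0$ on both boundary pieces. Finally, connectedness of $K$ forces the lower components to be joined through the single handle, which excludes a configuration of two components below and two above. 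This gives degree $\leq 3$ for every vertex, and transporting the bound back through $\mathcal{R}(\Pi|_M)\cong\mathcal{R}(\mscr{D})$ concludes the proof.
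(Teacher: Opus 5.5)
Your proposal is correct and follows essentially the same route as the paper. You pass to the double $M$ via Proposition \ref{prop:top-inv}, where $\Pi|_M$ is Morse with distinct critical values, and then use the local handle/cell picture at a critical point of index $k$, where only $k=1$ (dually $k=n-1$) gives a disconnected attaching (or belt) sphere. Your localization to the component $K$ of the slab, and your explicit count of components on both sides using connectedness of $K$ and the dual decomposition, is a more careful version of the paper's argument, which only invokes the deformation retraction of $M^{z+\delta}$ onto $M^{z-\delta}\cup e^k$.
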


\begin{proof}
	Let $M\subset\R^{n+1}$ be the double of $\mscr{D}\subset\R^n$. By Proposition \ref{prop:top-inv} we have that $\mathcal{R}(\mscr{D})$ is isomorphic to $\mathcal{R}(\Pi|_{M})$, where $\Pi|_{M}$ is a stable Morse function as $\pi|_{\partial\mscr{D}}$ is so. Let $z\in\R$ be a critical point for $\Pi|_{M}$ and $v\in M\cap(\R^{n}\times\{0\})=\partial\mscr{D}\times\{0\}$ be the unique critical point with critical value $z$. Let $\delta>0$ be sufficiently small such that $z\in [z-\delta,z+\delta]$ is the unique critical value of $\Pi|_{M}$ and denote by $M^{z'}$ the $n$-manifold with boundary $\Pi|_{M}^{-1}(]-\infty,z'])$ for every regular value $z'\in\R$. Let $k\in\{0,\dots,n\}$ be the index of $v$ as a critical point of $\Pi|_{M}$. As $v\in M^{z+\delta}$ is the unique critical point of $\Pi|_{M}$ in $\Pi|_{M}^{-1}([z-\delta,z+\delta])$, we have that $M^{z+\delta}$ retracts by deformation to $M^{z-\delta}\cup e^k$, where $e^k$ is a $k$-cell contained in $M^{z+\delta}\setminus\Pi^{-1}(z+\delta)$ such that $e^k\cap \Pi|_{M}^{-1}(z-\delta)=\partial e^k$ by \cite[Thm.3.1,\S6]{Hir94}. As the boundary $\partial e^k$ of a $k$-cell $e^k$ is disconnected if and only if $k=1$ and in this case the number of connected components of $\partial e^k$ is $2$, we obtain that $\Pi|_{M}^{-1}(z-\delta)\sqcup\Pi|_{M}^{-1}(z+\delta)$ has at most $3$ connected components in the case of critical points of index $1$ and $n-1$, which is the dual case by applying \cite[Thm.3.2,\S6]{Hir94} to the Morse function $-\Pi|_{M}$. This shows that the degree of $v\in\mathcal{R}(\Pi|_{M})$ is at most $3$, as desired.
\end{proof}

\begin{remark}
	Let $\mscr{D},\mscr{D}'\subset\R^n$ be domains of finite type such that $\mscr{D}\sim_{v}\mscr{D}'$. If $\mscr{D}$ is stable, then $\mscr{D}'$ is so.
\end{remark}

\vspace{0.5em}

\subsection{Algebraic and globally algebraic domains of finite type}\label{sub:1.4}

We introduce a notion of algebraic domain of finite type $\mscr{D}\subset\R^2$ similarly to \cite{BPS24}. However, we are interested in a global version of this notion. We recall that, whenever $X\subset\R^n$ is a real algebraic set, a connected component $C$ of $X$ is said to be \emph{nonsingular} if $C\subset \textnormal{Reg}(X)$, where $\textnormal{Reg}(X)$ denotes the nonsingular locus of $X$, see \cite[Def.3.3.9\& Not.3.3.13]{BCR98}.

\begin{definition}\label{def:alg-dom}
	An \emph{algebraic domain of finite type} $\mscr{D}\subset\R^n$ is a domain of finite type whose boundary $\partial\mscr{D}$ is a union of nonsingular connected components of a real algebraic set. 
\end{definition}

\begin{remark}\label{rmk:Nash}
	Observe that every algebraic domain $\mscr{D}\subset\R^n$ is actually a Nash submanifold with boundary of $\R^n$, we refer to \cite{BCR98,Shi87} for more details about Nash manifolds and functions. We may assume $\mscr{D}$ is connected. Indeed, let $\partial\mscr{D}=\bigsqcup_{i=1}^\ell N_i$ where each $N_i$ is a nonsingular connected component of an algebraic hypersurface, hence a Nash manifold. Let $h_i\in\mathcal{N}(\R^n)\subset\cinfty(\R^n)$ be a Nash function such that $N_i=\mathcal{Z}_{\R^n}(h_i)$ and $\nabla h_i(x)\neq \overline{0}$ for every $x\in N_i$, for every $i=1,\dots,\ell$. As $\mscr{D}$ is a compact connected $\cinfty$ hypersurface it divides $\R^n$ in two connected components in terms of the strictly positivity or negativity of $h_i$, $\mscr{D}\setminus\partial\mscr{D}$ is contained in exactly a connected component of $\R^n\setminus N_i$ for every $i=1,\dots,\ell$. So, up to changing the sign of the $h_i$'s if needed, we obtain that:
	\[
	\mscr{D}=\{(x_1,\dots,x_n)\in\R^n\,|\,h_i(x_1,\dots,x_n)\geq 0,\, i=1,\dots,\ell\}.
	\]
	is a semialgebraic set, hence a Nash submanifold with boundary of $\R^n$ of dimension $n$.
	
	Observe that, just considering products of the $h_i$'s defining $\partial\mscr{D}$, a single Nash inequality is needed in the description of an algebraic domain of finite type $\mscr{D}\subset\R^n$. By contrast, if we wish to describe $\mscr{D}\subset\R^n$ by polynomial inequalities a (finite) system is required in general.
\end{remark}

We are mostly interested in a strictly smaller class of algebraic domains $\mscr{D}\subset\R^n$ we call \emph{globally algebraic domains}, since their boundary is a nonsingular algebraic hypersurface of $\R^n$ and not just a collection of its connected components.

\begin{definition}\label{def:glob-alg-dom}
	We say that a subset $\mscr{D}\subset\R^n$ is a \emph{globally algebraic domain of finite type} if $\mscr{D}\subset\R^n$ is a domain of finite type and there exists a polynomial $g\in\R[x_1,\dots,x_n]$ such that the zero locus $\mathcal{Z}_{\R^n}(g)$ of $g$ in $\R^n$ is a nonsingular algebraic hypersurface of $\R^n$ and
	\[
	\mscr{D}=\{(x_1,\dots,x_n)\in\R^n\,|\,g(x_1,\dots,x_n)\geq 0\}.
	\]
\end{definition}

\begin{example}
	Consider the algebraic domain $\mscr{D}\subset\R^2$ defined as
	\[
	\mscr{D}:=\Big\{(x,y)\in\R^2\,|\,x^3-x-y^2\geq 0,\,\Big(x+\frac{1}{2}\Big)^2+y^2-1\leq 0\Big\}.
	\]
	Observe that $\partial\mscr{D}$ consists exactly of a compact connected component of the elliptic curve given by the equation $y^2=x^3-x$, hence $\mscr{D}$ is a compact algebraic domain. By contrast, $\mscr{D}$ is not a globally algebraic domain since $\partial\mscr{D}$ is just a connected component of an irreducible algebraic set, thus it is not an algebraic set by itself.
\end{example}

\vspace{1em}

\section{Algebraic realization of Poincaré-Reeb graphs}\label{sec:2}

\subsection{Algebraic approximation of stable domains of finite type} In this section we construct stable globally algebraic domains via algebraic approximation. Let us briefly recall some notation originally introduced in the recent papers \cite{FG,GSa,Sav1} in the special case of algebraic hypersurfaces $X\subset\R^n$.

Let $X\subset\R^n$ be an algebraic set. We say that $X\subset\R^n$ is \emph{$\Q$-algebraic} if there exists a polynomial $p\in\Q[x_1,\dots,x_n]$ such that $X=\mathcal{Z}_{\R^n}(p):=\{x\in\R^n\,|\,p(x)=0\}$. The set of $\Q$-algebraic subsets of $\R^n$ constitutes the family of closed sets of a topology of $\R^n$ which is strictly coarser then the usual Zariski topology of $\R^n$. Assume $X$ is a $\Q$-algebraic subset of $\R^n$ and denote by $\Ii_{\Q}(X):=\Ii(X)\cap\Q[x_1,\dots,x_n]$ the ideal of $\Q[x_1,\dots,x_n]$ of those polynomials vanishing at $X\subset\R^n$. Assume $X\subset\R^n$ is an hypersurface. We say that $X\subset\R^n$ is \emph{$\Q$-nonsingular} if there is $q\in\Ii_{\Q}(X)$ such that $\mathcal{Z}_{\R^n}(q)=X$ and $\nabla q(x)\neq \overline{0}$ for every $x\in X$. Clearly, if $X\subset\R^n$ is $\Q$-nonsingular, then it is nonsingular in the usual sense. We refer to \cite{FG,GSa} for further information on $\Q$-algebraic sets and their singularities in the general setting.

\begin{remark}
	There are $\Q$-algebraic hypersurfaces $X\subset\R^n$ that are nonsingular, whereas they are not $\Q$-nonsingular. Consider for instance the $\Q$-algebraic line $X\subset\R^2$:
	\[
	X:=\{(x_1,x_2)\in\R^2\,|\,x_2-\sqrt[3]{2}x_1=0\}=\{(x_1,x_2)\in\R^2\,|\,q(x_1,x_2)=x_2^3-2x_1^3=0\}.
	\]
	It is easy to see that $\Ii_{\Q}(X)=(q)$ but $\nabla q(0,0)=(0,0)$, hence $X\subset\R^n$ is not $\Q$-nonsingular.
\end{remark}

Let $p\in\R[x_1,\dots,x_n]$ be a polynomial and let $p_i\in\R[x_1,\dots,x_n]$ be the unique homogeneous polynomials of degree $i$ such that $p=\sum_{i=0}^dp_i$, where $d$ is the degree of $p$ (as a convention we assume the degree of the zero polynomial to be $0$). We say that  $p\in\R[x_1,\dots,x_n]$ is \emph{overt} if $\mathcal{Z}_{\R^n}(p_d)$ is either empty or $\{0\}$. A $\Q$-algebraic set $V\subset\R^{n}$ is said to be \emph{projectively $\Q$-closed} if there exists an overt polynomial $q\in\Q[x_1,\dots,x_n]$ such that $X=\mathcal{Z}_{\R^n}(q)$. This notion coincides with requiring that $\theta(X)$ is Zariski closed in $\Proy^{n}(\R)$ and it is described by polynomial equations with rational coefficients, where $\theta:\R^{n}\to\Proy^{n}(\R)$ denotes the affine parametrization $\theta(x_1,\ldots,x_{n}):=[1,x_1,\ldots,x_{n}]$. As a consequence, if $X\subset\R^n$ is projectively $\Q$-closed, then it is compact with respect to the usual Euclidean topology of $\R^n$.

\begin{remark}
	There are compact $\Q$-algebraic subsets $X\subset\R^n$ that are not projectively $\Q$-closed. Consider the compact $\Q$-algebraic curve $C:=\{(x_1,x_2)\in\R^2\,|\,x_1^2+x_2^4=1\}$ in $\R^2$. Then, its projective closure in $\Proy^2(\R)$ is given by $C':=\{[x_0,x_1,x_2]\in\Proy^2(\R)\,|\,x_0^4-x_1^2 x_0^2-x_2^4=0\}$ and $[0,1,0]\in C'\setminus\theta(C)$.
\end{remark}

We say that a function $f:\R^n\to\R$ is \emph{$\Q$-regular} if there are polynomials $p,q\in\Q[x_1,\dots,x_n]$ such that $\mathcal{Z}_{\R^n}(q)=\varnothing$ and $f(x)=\frac{p(x)}{q(x)}$ for every $x\in\R^n$.

Let us recall the following preliminary facts on $\cinfty$-approximation.

\begin{remark}\label{rem:implicite}
	Let $M\subset\R^n$ be a $\cinfty$ hypersurface such that $\pi|_M$ is a Morse function. Let $h\in\cinfty(\R^n)$ such that $\mathcal{Z}_{\R^n}(h)=M$ and $\nabla h(x)\neq \overline{0}$ for every $x\in M$. Then:
	\begin{enumerate}
		\item  $x\in M$ is a critical point of $\pi|_M$ if and only if $\frac{\partial h}{\partial x_i}(x)=0$ for every $i=2,\dots,n$.
		\item\label{rem:2} Let $x=(x_1,x')\in M$ be a critical point of $\pi|_M$. Then, by the implicit function theorem, there are a neighborhood $U\subset\R^{n-1}$ of $x'$ and a real valued function $\varphi\in\cinfty(U)$ such that $h(\varphi(y'),y')=0$ for every $y'\in U$. In particular, $\pi|_M(\varphi(y'),y')=\varphi(y')$ for every $y'\in U$, hence:
		\begin{enumerate}[label=(\alph*), ref=(\alph*)]
			\item\label{rem:2a} the first order derivatives of $\pi|_M\circ(\varphi\times\id_{\R^{n-1}}):U\to\R$ at $x'$ are completely determined by the first order derivatives of $h$.
			\item\label{rem:2b} By \ref{rem:2a}, also the Hessian of $\pi|_M\circ(\varphi\times\id_{\R^{n-1}}):U\to\R$ at $x'$ is completely determined by the derivatives of $h$ up to order $2$.
		\end{enumerate}  
	\end{enumerate}
\end{remark}

Our main approximation result reads as follows.

\begin{theorem}\label{thm:approx}
	Every stable compact domain of finite type $\mscr{D}\subset\R^n$ is vertically equivalent to a globally algebraic stable domain of finite type $\mscr{D}'\subset\R^n$. In addition, we may assume that $\partial\mscr{D}'\subset\R^n$ is a projectively $\Q$-closed $\Q$-nonsingular $\Q$-algebraic hypersurface arbitrarily $\cinfty$ close to $\partial\mscr{D}$, that is:
	\begin{enumerate}[label=\emph{(\roman*)}, ref=(\roman*)]
		\item\label{en:thmalg1} there exists $q\in\Q[x_1,\dots,x_n]$ such that $\mscr{D}'=\{x\in\R^n\,|\,q(x)\geq 0\}$, $\partial\mscr{D}'=\mathcal{Z}_{\R^n}(q)$ and $\nabla q(x)\neq \overline{0}$ for every $x\in\partial\mscr{D}'$.
		\item\label{en:thmalg2} $\partial\mscr{D}'\subset\R^n$ coincides with its projective closure in $\mathbb{P}^n(\R)$.
	\end{enumerate}
\end{theorem}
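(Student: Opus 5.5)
The plan is to approximate a smooth defining function of $\partial\mscr{D}$ by an overt polynomial with rational coefficients, and then use stability of $\pi|_{\partial\mscr{D}}$ to get a vertical equivalence.

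\emph{Defining function.} Choose $h\in\cinfty(\R^n)$ with the following properties:
\begin{itemize}
\item $\mscr{D}=\{h\geq 0\}$ and $\partial\mscr{D}=\mathcal{Z}_{\R^n}(h)$;
\item $\nabla h\neq \overline{0}$ on $\partial\mscr{D}$;
\item $h$ coincides with $-(1+\|x\|^2)^{d}$ outside a large ball $B$ containing $\mscr{D}$, for some even $d$ larger than needed.
\end{itemize}
Such an $h$ exists by gluing, with a partition of unity, a local defining function near $\partial\mscr{D}$ to a negative function on the complement. The negative function should be negative on $\R^n\setminus\mscr{D}$ and positive in the interior.

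\emph{Polynomial approximation.} By the relative Weierstrass approximation theorem, approximate $h$ in the $\EuScript{C}^2$ (indeed $\cinfty$) topology on $B$ by a polynomial. Since $\Q$ is dense in $\R$, we may perturb the coefficients to get $p\in\Q[x_1,\dots,x_n]$ still close to $h$ on $B$.

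To control $p$ at infinity, set
\[
q:=p-\varepsilon\|x\|^{2e}
\]
with $\varepsilon\in\Q_{>0}$ small and $e$ even, with $2e>\deg p$. Then the top-degree homogeneous part of $q$ is $-\varepsilon\|x\|^{2e}$, whose zero set is $\{0\}$. Hence $q$ is overt, so $\mathcal{Z}_{\R^n}(q)$ is projectively $\Q$-closed and compact.

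One must still check that $q<0$ outside $B$, so that no new components appear. Near $\partial B$ this follows because $h$ is strongly negative there and $p$ approximates $h$. Far away, $-\varepsilon\|x\|^{2e}$ dominates. The intermediate annulus needs a careful choice: first fix $\varepsilon$ and the radius, then approximate $p$ to $h$ on a larger ball. I expect this balancing of the constants to be the main technical obstacle. I would handle it by taking $h=-(1+\|x\|^2)^{e}$ near infinity, with the same $e$, so that $h-\varepsilon\|x\|^{2e}$ is itself already negative outside $B$.

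\emph{Consequences of closeness.} With $q$ close to $h$ in $\EuScript{C}^2$ on $B$ and negative outside, the standard transversality and isotopy argument gives the following:
\begin{itemize}
\item $\nabla q\neq \overline{0}$ on $\mathcal{Z}(q)$, so $\mathcal{Z}(q)$ is $\Q$-nonsingular;
\item $\mathcal{Z}(q)$ is the image of $\partial\mscr{D}$ under a diffeomorphism $\Theta$ of $\R^n$ that is $\cinfty$ close to the identity, constructed via the normal flow;
\item $\Theta$ maps $\mscr{D}$ onto $\mscr{D}':=\{q\geq0\}$.
\end{itemize}

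\emph{Stability and vertical equivalence.} Then $\pi|_{\partial\mscr{D}'}\circ\Theta|_{\partial\mscr{D}}$ is $\cinfty$ close to $\pi|_{\partial\mscr{D}}$. Remark~\ref{rem:implicite} shows that critical points and Hessians are controlled by the derivatives of $q$ up to order $2$, so this composite is again Morse with distinct critical values. Since $\partial\mscr{D}$ is compact, stability (Definition~\ref{def:stable-f} together with \cite{GG73}) gives diffeomorphisms $\phi_0$ of $\partial\mscr{D}$ and $\psi$ of $\R$ such that $\psi\circ\pi=\pi\circ\Theta\circ\phi_0$ on $\partial\mscr{D}$. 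Because everything is close to the identity, $\psi$ is orientation preserving.

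Finally, extend the boundary conjugacy to all of $\mscr{D}$ by integrating gradient-like vector fields for $\pi$ relative to the boundary. Concretely, the level sets of $\pi$ on $\mscr{D}$ and on $\mscr{D}'$ are built by matching handle structures of the two boundary functions. This gives $\phi:\mscr{D}\to\mscr{D}'$ with $\psi\circ\pi|_{\mscr{D}}=\pi|_{\mscr{D}'}\circ\phi$. Alternatively, one can apply stability of the pair $(\mscr{D},\pi)$ as a function on a manifold with boundary, whose boundary restriction is Morse with distinct critical values and which has no interior critical points. This second step is the other delicate point; I would treat it via the double of Proposition~\ref{prop:top-inv}. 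The conjugacy is built $\mathbb{Z}/2$-equivariantly on the double, applied to $y^2-q$ versus $y^2-h$, and then restricted to $y\geq0$.
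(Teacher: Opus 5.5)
The step you flagged as the main obstacle is a genuine gap, and the remedy you propose does not close it. Overtness forces $2e>\deg p$, so $e$ can only be fixed after $p$. Choosing $h=-(1+\|x\|^2)^e$ near infinity ``with the same $e$'' is therefore circular. More to the point, the sign of $h-\varepsilon\|x\|^{2e}$ outside $B$ is irrelevant. Outside the ball on which you approximated, $p$ bears no relation to $h$, since Weierstrass approximants can grow arbitrarily fast just beyond it. With $\varepsilon$ merely small and $e$ only subject to $2e>\deg p$, the radius beyond which $\varepsilon\|x\|^{2e}$ dominates $|p|$ depends on $p$ and may lie far outside that ball. In between, $\mathcal{Z}_{\R^n}(q)$ can acquire new components, which ruins both the identification of $\mathcal{Z}_{\R^n}(q)$ with a small deformation of $\partial\mscr{D}$ and the vertical equivalence.

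What is missing is the right order of choices, together with the remark that subtracting a nonnegative term is harmless wherever $p<0$:
\begin{enumerate}
\item Fix radii $r<r_1<R$ with $\mscr{D}\subset B_r$, and a tolerance $\eta\in\Q_{>0}$ small enough for your closeness requirements near $\partial\mscr{D}$ and below $\min_{\overline{B}_R\setminus B_r}|h|$.
\item Approximate $h$ within $\eta$ in $\EuScript{C}^k$ on $\overline{B}_R$ by some $p\in\Q[x_1,\dots,x_n]$.
\item Only then take $e$ large, and set $\varepsilon:=\eta\,r_1^{-2e}$.
\end{enumerate}
On $B_r$ the penalty and its derivatives of order $\le k$ are $O(e^k(r/r_1)^{2e})$, hence negligible. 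On $\overline{B}_R\setminus B_r$ you get $q\le p<0$. For $\|x\|\ge R$ you have $|p(x)|\le A(\|x\|/R)^{\deg p}$, while $\varepsilon\|x\|^{2e}=\eta(R/r_1)^{2e}(\|x\|/R)^{2e}$, which wins once $e$ is large.

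The paper bypasses this balancing altogether. It makes $h$ equal, off a compact $K$, to an overt $q_0\in\Q[x_1,\dots,x_n]$ with $q_0\le-2$. It then approximates the compactly supported $h-q_0$ on all of $\R^n$ by $u=p'/(1+|x|^2)^e$ with $\sup_{\R^n}|h-q_0-u|<1$, using \cite[Lem.3.6]{GSa} (a Stone--Weierstrass argument on the one-point compactification, with rational coefficients). Hence $q_0+u\le-1$ off $K$, and clearing the denominator gives the overt polynomial $q_0(1+|x|^2)^e+p'$.

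For the extension of the conjugacy into the interior, your preferred route via the double is unsupported as stated. Mather's theorem applied to the double gives a conjugacy that need not commute with $y\mapsto -y$. You would have to invoke an equivariant version of Mather's theorem: averaging works for the infinitesimal equation, but not for the conjugating diffeomorphisms themselves. The paper instead proves infinitesimal stability of $\pi|_{\mscr{D}}$ using vector fields tangent to $\partial\mscr{D}$, via the local model of Lemma~\ref{lem:Morse} at boundary critical points. It then applies van Essen's version of Mather's theorem for manifolds with boundary \cite[Thm.2.3]{vEss78}.

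Your first suggestion is actually the cheapest, and it needs no handle matching:
\begin{enumerate}
\item Take $\phi_0$ and $\psi$ close to the identity; Mather's theorem with continuous dependence allows this.
\item Extend $\Theta\circ\phi_0$ to a diffeomorphism $\tilde\phi:\mscr{D}\to\mscr{D}'$ that is $\EuScript{C}^1$-close to $\Theta$.
\item Put $f_0:=\psi\circ\pi|_{\mscr{D}}$, $f_1:=\pi\circ\tilde\phi$ and $f_t:=(1-t)f_0+tf_1$.
\end{enumerate}
These functions agree on $\partial\mscr{D}$, and $df_t\neq0$ on $\mscr{D}$ because all of them are $\EuScript{C}^1$-close to $\pi$. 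So $\xi_t:=(f_0-f_1)\nabla f_t/|\nabla f_t|^2$ vanishes on $\partial\mscr{D}$, and its flow $\Phi_t$ preserves $\mscr{D}$. From $\frac{d}{dt}(f_t\circ\Phi_t)=0$ you get $\pi\circ(\tilde\phi\circ\Phi_1)=\psi\circ\pi$ on $\mscr{D}$, as required.
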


\begin{proof}
	Fix $\mscr{U}\subset\cinfty(\R^n,\R^n)$ a neighborhood of the identity and let $\mscr{U}|_{\partial\mscr{D}}:=\{f\in\cinfty(\partial \mscr{D},\R^n)\,|\,\\f=F|_{\partial\mscr{D}}\,\text{for}\,F\in\mscr{U}\}$ be a neighborhood of the inclusion map $\iota_{\partial\mscr{D}}:\partial \mscr{D}\hookrightarrow\R^n$ in $\cinfty(\partial \mscr{D},\R^n)$. Let $h\in\cinfty(\R^n)$ such that $\mathcal{Z}_{\R^n}(h)=\partial\mscr{D}$, $\mscr{D}:=\{x\in\R^n\,|\,h(x)\geq 0\}$ and $\nabla h(x)\neq \overline{0}$ for every $x\in \partial\mscr{D}$. Fix a compact neighborhood $K$ of $\partial\mscr{D}$ in $\R^n$. Then, by a partition of unity argument, we may suppose $h|_{\R^n\setminus K}=q|_{\R^n\setminus K}$, for some overt polynomial $q\in\Q[x_1,\dots,x_n]$ with $\mathcal{Z}_{\R^n}(q)=\varnothing$ and $q(x)\leq -2$ for every $x\in\R^n\setminus K$. Then, an application of \cite[Lem.3.6]{GSa} to $h-q$ gives a $\Q$-regular function $u:\R^n\to\R$ such that:
	\begin{enumerate}[label=(\alph*), ref=(\alph*)]
		\item\label{en:approx1} There are $e\in\N$ and a polynomial $p'\in\Q[x_1,\dots,x_n]$ of degree $\leq 2e$ such that $u(x):=\frac{p'(x)}{(1+|x|_n^2)^e}$ for every $x\in\R^n$.
		\item\label{en:approx2} $\sup_{x\in\R^n} |h(x)-q(x)-u(x)|< 1$.
		\item\label{en:approx3}$u$ is arbitrarily $\cinfty$ close to $h-q$.
	\end{enumerate}
	Define the $\Q$-regular function $h':\R^n\to\R$ as $h'(x):=u(x)+q(x)$. Observe that $h'(x)=(1+|x|_n^2)^{-e}q'(x)$, where $q'\in\Q[x_1,\dots,x_n]$ is the polynomial $q'(x):=q(x)(1+|x|_n^2)^e+p'(x)$. By \ref{en:approx1}, we get that $q'(x)$ is an overt polynomial. In addition, by \ref{en:approx2} we have that $h'(x)\leq -1$ for every $x\in\R^n\setminus K$, that is, $\mathcal{Z}_{\R^n}(h')=\mathcal{Z}_{\R^n}(q')\subset K\setminus\partial K$ and $\mscr{D}':=\{x\in\R^n\,|\,q'(x)\geq 0\}\subset K\setminus\partial K$. By \ref{en:approx3}, $h'$ is arbitrarily $\cinfty$ close to $h$ and therefore $\nabla h'(x)\neq \overline{0}$ for every $x\in\mathcal{Z}_{\R^n}(h')$. Write $h'(x)=\frac{q_1(x)}{q_2(x)}$ where $q_1,q_2\in\Q[x_1,\dots,x_n]$ with $q_2(x)\neq 0$ for every $x\in\R^n$. Hence, $\nabla q_1(x)=q_2(x)\nabla h'(x)\neq\overline{0}$ for every $x\in \partial\mscr{D}'=\mathcal{Z}_{\R^n}(q_1)$ and therefore $\partial\mscr{D}'\subset\R^n$ is a projectively $\Q$-closed $\Q$-nonsingular $\Q$-algebraic set. Moreover, \ref{en:approx3} and Thom isotopy lemma \cite[Thm.14.1.1]{BCR98} ensure that $\partial\mscr{D}':=\mathcal{Z}_{\R^n}(q')\subset\R^n$ is isotopic to $\partial\mscr{D}$. Denote by $F:\partial\mscr{D}\times[0,1]\to\R^n$ such an isotopy such that $F_0:=\iota_{\partial\mscr{D}}$ and $\Phi:=F_1$ satisfying $\Phi(\partial\mscr{D})=\partial\mscr{D}'$ and $\Phi$ is $\cinfty$ close to $\iota_{\partial\mscr{D}}$. In particular, the embedding $\Phi:\partial\mscr{D}\to\R^n$ can be chosen in $\mscr{U}|_{\partial\mscr{D}}$. Hence, we proved that $\partial\mscr{D}'\subset\R^n$ satisfies \ref{en:thmalg1}\&\ref{en:thmalg2}. We are only left to prove that $\mscr{D}$ and $\mscr{D}'$ are vertically equivalent.
	
	Observe that, since $\Phi$ can be chosen arbitrarily $\cinfty$ close to $\iota_{\partial\mscr{D}}$, $\pi|_{\partial\mscr{D}'}\circ\Phi$ is arbitrarily $\cinfty$ close to $\pi|_{\partial\mscr{D}}$. Thus, as $\mscr{D}$ is stable, up to shrinking the neighborhood $\mscr{U}$ of $\id_{\R^n}$ in $\cinfty(\R^n,\R^n)$ if necessary, we may suppose that $\pi|_{\partial\mscr{D}'}\circ\Phi$ is conjugated to $\pi|_{\partial\mscr{D}}$ in $\cinfty(\partial\mscr{D})$ by \ref{en:approx3} and \cite[Thm.3,\S 3]{Mat69}, that is, there are diffeomorphisms $\phi:\partial\mscr{D}\to \partial\mscr{D}$ and $\psi:\R\to\R$ such that
	\begin{equation}\label{eq:stable}
		\psi\circ\pi|_{\partial\mscr{D}}=(\pi|_{\partial\mscr{D}'}\circ\Phi)\circ\phi=\pi|_{\partial\mscr{D}'}\circ(\Phi\circ\phi).
	\end{equation}
	Since $\partial \mscr{D}\subset\R^n$ is a compact $\cinfty$ hypersurface, it is orientable. Let us show that $\phi:\partial\mscr{D}\to \partial\mscr{D}$ and $\psi:\R\to\R$ are orientation preserving. By \cite[Thm.3,\S 3]{Mat69}, there are a neighborhood $\mscr{V}$ of $\pi|_{\partial\mscr{D}}$ in $\cinfty(\partial\mscr{D})$, continuous functions $H_1:\mscr{V}\to\textnormal{Diff}(\partial\mscr{D})$  and $H_2:\mscr{V}\to\textnormal{Diff}(\R)$ such that $H_1(\pi|_{\partial\mscr{D}})=\id_{\partial\mscr{D}}$ and $H_1(\pi|_{\partial\mscr{D}'}\circ\Phi)=\phi$, $H_2(\pi|_{\partial\mscr{D}})=\id_{\R}$ and $H_2(\pi|_{\partial\mscr{D}'}\circ\Phi)=\psi$. Up to restricting $\mscr{V}$ to its connected component containing $\pi|_{\partial\mscr{D}}$, we may suppose $\mscr{V}$ is connected. Therefore, by continuity of $H_1$ and $H_2$, $H_1(\mscr{V})$ and $H_2(\mscr{V})$ are connected subsets of $\textnormal{Diff}(\partial\mscr{D})$ and of $\textnormal{Diff}(\R)$ containing $\id_{\partial\mscr{D}}$ and $\id_{\R}$, respectively. This shows that the diffeomorphisms $\phi\in H_1(\mscr{V})$ and $\psi\in H_2(\mscr{V})$ are orientation preserving. 
	
	Consider the isotopy $F':\partial\mscr{D}\times[0,1]\to\R^n$ defined as $F':=F\circ(\phi\times\id_\R)$. Observe that, up to shrinking the neighborhood $\mscr{U}$ of $\id_{\R^n}$ in $\cinfty(\R^n,\R^n)$, and therefore the neighborhood $\mscr{U}|_{\partial\mscr{D}}$ of $\iota_{\partial\mscr{D}}$ in $\cinfty(\partial\mscr{D},\R^n)$, we have:
	\begin{enumerate}
		\item\label{thm:approx-1} $F':\partial\mscr{D}\times[0,1]\to\R^n$ is an isotopy arbitrarily close to $\jmath_{\partial\mscr{D}}:=\iota_{\partial\mscr{D}}\circ\phi$.
		\item\label{thm:approx-2} Let $\Phi':=F'_1:\partial\mscr{D}\to\R^n$. If $v_1,\dots,v_k$ are the critical points of $\pi|_{\partial\mscr{D}}$ such that $\pi|_{\partial\mscr{D}}(v_i)<\pi|_{\partial\mscr{D}}(v_{i+1})$ for every $i\in\{1,\dots,k-1\}$, then $\Phi'(v_1),\dots,\Phi'(v_k)$ are the critical points of $\pi|_{\partial\mscr{D}'}$ and, by (\ref{eq:stable}), $\pi|_{\partial\mscr{D}'}(\Phi'(v_i))=\psi(\pi_{\partial\mscr{D}}(v_i))<\psi(\pi_{\partial\mscr{D}}(v_{i+1}))=\pi|_{\partial\mscr{D}}(\Phi'(v_{i+1}))$ for every $i\in\{1,\dots,k-1\}$, as $\psi\in\text{Diff}(\R)$ is orientation preserving. In addition, by Remark \ref{rem:implicite}\ref{rem:2b}, also the index of each critical point is preserved by \ref{en:approx3}.
	\end{enumerate}
	In particular, $\mscr{D}'\subset\R^n$ is a stable domain of finite type. Hence, we are only left to prove that, by choosing $\partial\mscr{D}'=\mathcal{Z}_{\R^n}(q')$ sufficiently $\cinfty$ close to $\partial\mscr{D}$, then $\mscr{D}':=\{x\in\R^n\,|\,q'(x)\geq 0\}$ is vertically equivalent to $\mscr{D}$ (see Definition \ref{def:v-equiv}).

	Observe that, by \cite[Thm.1.3, \S 8]{Hir94}, we may extend the isotopy $F':\partial\mscr{D}\times[0,1]\to\R^n$ to a diffeotopy with compact support $G:\R^n\times[0,1]\to\R^n$ such that $G_1\in\mscr{U}$. Denote by $\partial\mscr{D}_1,\dots,\partial\mscr{D}_\ell$ the connected components of $\partial\mscr{D}$. Let $h_i\in\cinfty(\R^n)$ be such that $\partial\mscr{D}_i=\mathcal{Z}_{\R^n}(h_i)$, $\nabla h_i(x)\neq\overline{0}$ for every $x\in \partial\mscr{D}_i$ and let $\mscr{D}_i:=\{x\in\R^n\,|\,h_i(x)\geq 0\}$ be compact for every $i\in\{1,\dots,\ell\}$. As $G_1$ is a diffeomorphism of $\R^n$ such that $G_1(\partial\mscr{D})=\partial\mscr{D}'$, then $G_1(\partial\mscr{D}_1),\dots,G_1(\partial\mscr{D}_\ell)$ are the connected components of $\partial\mscr{D}'$ and $G_1(\mscr{D}_i)$ is compact, therefore it coincides with the connected compact $\cinfty$ submanifold of $\R^n$ whose boundary is $G_1(\partial\mscr{D}_i)$. Let $h'_i\in\cinfty(\R^n)$ be such that $G_1(\partial\mscr{D}_i)=\mathcal{Z}_{\R^n}(h'_i)$, $\nabla h'_i(x)\neq \overline{0}$ for every $x\in G_1(\partial\mscr{D}_i)$ and $G_1(\mscr{D}_i)=\{x\in\R^n\,|\,h'_i(x)\geq 0\}$. Then, up eventually to a sign, we have that $\mscr{D}$ and $\mscr{D}'$ coincide with the positivity locus of the product of the $h_i$'s and of the $h'_i$'s, respectively. That is, $\mscr{D}=\big\{x\in\R^n\,|\,\pm\prod_{i=1}^{\ell} h_i\geq 0\big\}$ and $\mscr{D}'=\big\{x\in\R^n\,|\,\pm\prod_{i=1}^{\ell} h'_i\geq 0\big\}$, and therefore $G_1(\mscr{D})=\mscr{D'}$ as $\sign(h_i(x))=\sign (h'_i(G_1(x)))$ for every $x\in\R^n$ and $i\in\{1,\dots,\ell\}$. As before, up to shrinking the neighborhood $\mscr{U}$ of $\id_{\R^n}$ in $\cinfty(\R^n,\R^n)$, we have that $\pi|_{\mscr{D}'}\circ G_1|_{\mscr{D}}\in\cinfty(\mscr{D})$ is arbitrarily close to $\pi|_{\mscr{D}}$. We are left to prove that $\pi|_{\mscr{D}'}\circ G_1|_{\mscr{D}}$ is conjugated to $\pi|_{\mscr{D}}$, more precisely, that there are orientation preserving diffeomorphisms $\phi':\mscr{D}\to \mscr{D}$ and $\psi':\R\to\R$ such that
	\begin{equation}\label{eq:stable-2}
		\psi'\circ\pi|_{\mscr{D}}=(\pi|_{\mscr{D}'}\circ G_1|_{\mscr{D}})\circ\phi'=\pi|_{\partial\mscr{D}'}\circ(G_1|_{\mscr{D}}\circ\phi').
	\end{equation}
	
	Observe that by \cite[Thm.2, \S 3]{Mat69}, and its generalization for $\cinfty$ maps between $\cinfty$ manifolds with boundary \cite[Thm.2.3]{vEss78}, it suffices to verify the following claim. In the second reference the author refers to the infinitesimal stability condition below as ``stratum infinitesimal stability".
	
	\vspace{0.5em}
	
	\noindent\textsc{CLAIM (Infinitesimal Stability):} For every $g\in\cinfty(\mscr{D})$ there exist a $\cinfty$ vector field $s:\mscr{D}\to T\mscr{D}$ tangent to $\partial \mscr{D}$ and a $\cinfty$ function $t:\R\to\R$ such that:
	\[
	g=s[\pi|_{\mscr{D}}]+t\circ \pi|_{\mscr{D}},
	\]
	where $s[\pi|_{\mscr{D}}]:\mscr{D}\to\R$ denotes the directional derivative of $\pi|_{\mscr{D}}$ in the direction $s(x)\in T_x \mscr{D}$ for every $x\in \mscr{D}$.
	
	Recall that, by assumption, the Morse function $\pi|_{\partial\mscr{D}}$ has a finite number of critical points and their critical values are distinct, thus choose a $\cinfty$ function $t:\R\to\R$ such that $t\circ\pi|_{\partial\mscr{D}}(v_i)=g(v_i)$ for every critical point $v_i$ of $\pi|_{\partial\mscr{D}}$. Observe that $g$ can be defined for instance by Lagrange interpolation. Then, up to substitute $g$ by $g':=g-t\circ \pi|_\mscr{D}$, it is sufficient to solve the equation $g=s[\pi|_\mscr{D}]$ for $g\in\cinfty(\mscr{D})$ such that $g(v_i)=0$ for every critical point $v_i$ of $\pi|_{\partial\mscr{D}}$. 
	
	Let us define the vector field $s:\mscr{D}\to T\mscr{D}$ locally at $p\in\mscr{D}$ as follows:
	\begin{enumerate}
		\item if $p\in\mscr{D}\setminus\partial\mscr{D}$, choose an open ball $B^n(p,r)\subset\mscr{D}$ centered at $p$ of radius $r>0$ sufficiently small such that $(d\pi|_{B^n(p,r)})_q\neq 0$ for every $q\in B^n(p,r)$. Choose a vector field $s^p:B^n(p,r)\to \R^n$ such that $(d\pi)(s^p)\neq 0$ on $B^n(p,r)$.
		\item if $p\in\partial\mscr{D}$ is a regular point of $\pi|_\mscr{D}$, choose an open ball $B^n(p,r)\subset\R^n$ of radius $r>0$ sufficiently small such that $(d\pi|_{\partial\mscr{D}\cap{B^n(p,r)}})_q\neq 0$ for every $q\in \partial\mscr{D}\cap B^n(p,r)$. Choose a vector field $s^p:\mscr{D}\cap B^n(p,r)\to \R^n$ tangent to $\partial\mscr{D}\cap B^n(p,r)$ such that $(d\pi)(s^p)\neq 0$ on $\mscr{D}\cap B^n(p,r)$.
		\item if $p\in\partial\mscr{D}$ is a critical point of $\pi|_{\partial\mscr{D}}$, by Lemma \ref{lem:Morse} there exists $r>0$, unique $\epsilon_i\in\{\pm 1\}$ for $i\in{2,\dots,n}$ depending on the index of the critical point $p$ of $\pi|_{\mscr{D}}$ and local coordinates $(y_1,\dots,y_{n})$ on $B^n(p,r)\cap\mscr{D}$ such that
		\[
		\pi|_{\mscr{D}\cap B^n(p,r)}(y_1,\dots,y_n)=\pi(p)+y_1+\sum_{i=2}^{n} \epsilon_i y_i^2.
		\]
	\end{enumerate}
	
	As $\mscr{D}\subset\R^n$ is compact, there are $p_1,\dots,p_k$ such that $\mscr{D}\subset\bigcup_{i=1}^k B^n(p_i,r_i)$. Let $\{\rho_1,\dots,\rho_k\}$ be a partition of unity subordinated to the open cover $\mathcal{U}=\{B^n(p_i,r_i)\}_{i=1}^k$. Let us locally define the desired vector field $s_i$ over $B^n(p_i,r_i)$ as follows:
	
	\begin{enumerate}
		\item\label{en:vector-field_1} if $p_i\in\mscr{D}\setminus\partial\mscr{D}$ or $p_i\in\partial\mscr{D}$ is not a critical point of $\pi|_{\partial\mscr{D}}$, define
		\[
		s_i(x):=
		\begin{cases}
			\frac{g(x)\rho_i(x)s^{p_i}(x)}{s^{p_i}[\pi|_{\mscr{D}}](x)} \quad\quad&\text{for $x\in B^n(p_i,r_i)\cap\mscr{D}$},\\
			0  &\text{otherwise.}
		\end{cases}
		\]
		\item\label{en:vector-field_2} if $p_i\in\partial\mscr{D}$ is a critical point of $\pi|_{\partial\mscr{D}}$, then $\rho_i g=\sum_{i=1}^{n} y_i h_i$ in the above local coordinates (since $g(p_i)=0$), for some $h_1,\dots,h_n\in\cinfty(\mscr{D}\cap B^n(p_i,r_i))$. In addition, as $\rho_i$ has compact support, each $h_i$ has compact support as well. Then define
		\[
		s_i:=
		\begin{cases}
			h_1 y_1 \frac{\partial}{\partial y_1} + \sum_{j=2}^{n} \frac{\epsilon_j h_j}{2} \frac{\partial}{\partial y_j}\quad\quad&\text{on $B^n(p_i,r_i)$,}\\
			0  &\text{otherwise.}
		\end{cases}
		\]
	\end{enumerate}
	
	Define the vector field $s(x):=\sum_{i=1}^k s_i(x)$. Then, $s[\pi|_{\mscr{D}}](x)=\sum_{i=1}^k s_i[\pi|_{\mscr{D}}](x)$ where $s_i[\pi|_{\mscr{D}}]=\rho_i g\in\cinfty(\mscr{D})$ for every $i$ such that $p_i$ is not a critical point of $\pi|_{\partial\mscr{D}}$ by (\ref{en:vector-field_1}). By contrast, if $p_i\in\partial\mscr{D}$ is a critical point of $\pi|_{\partial\mscr{D}}$, then
	\begin{align*}
		s_i[\pi|_\mscr{D}]:=&
		\begin{cases}
			\big(h_1 y_1 \frac{\partial}{\partial y_1} + \sum_{j=2}^{n} \frac{\epsilon_j h_j}{2} \frac{\partial}{\partial y_j}\big)(y_1+\sum_{i=2}^{n} \epsilon_i y_i^2) \quad\quad&\text{on $B^n(p_i,r_i)$}\\
			0  &\text{otherwise.}
		\end{cases}\\
		=&
		\begin{cases}
			h_1 y_1 + \sum_{j=2}^{n} \epsilon_j^2 h_j y_j \quad\quad&\text{on $B^n(p_i,r_i)$}\\
			0  &\text{otherwise.}
		\end{cases}
		=
		\begin{cases}
			\sum_{j=1}^{n} h_j y_j \quad\quad&\text{on $B^n(p_i,r_i)$}\\
			0  &\text{otherwise.}
		\end{cases}\\
		=&\rho_i g.
	\end{align*}
	Hence, we got $s[\pi|_{\mscr{D}}]=\sum_{i=1}^k s_i[\pi|_{\mscr{D}}]=\sum_{i=1}^{k}\rho_i g=g$, as desired. $\hfill\blacklozenge$
	
	\vspace{0.5em}
	Finally, by \cite[Thm.2.3]{vEss78}, there are a neighborhood $\mscr{V}'$ of $\pi|_{\mscr{D}}$ in $\cinfty(\mscr{D})$, $\cinfty$ diffeomorphisms $\phi':\mscr{D}\to\mscr{D}$ and $\psi':\R\to\R$ such that $\psi'\circ\pi|_{\mscr{D}}=\pi|_{\partial\mscr{D}'}\circ(G_1|_{\mscr{D}}\circ\phi')$ and continuous functions $H'_1:\mscr{V'}\to\textnormal{Diff}(\partial\mscr{D})$ and $H'_2:\mscr{V'}\to\textnormal{Diff}(\R)$ such that $H'_1(\pi_{\mscr{D}})=\id_{\partial\mscr{D}}$ and $H'_1(\pi|_{\partial\mscr{D}'}\circ\Phi)=\phi'$, $H'_2(\iota_{\partial\mscr{D}})=\id_{\R}$ and $H'_2(\pi|_{\partial\mscr{D}'}\circ\Phi)=\psi'$. Up to restricting $\mscr{V}'$ to its connected component containing $\pi|_{\mscr{D}}$, we may suppose $\mscr{V}'$ is connected. Therefore, $H'_1(\mscr{V'})$ and $H'_2(\mscr{V'})$ are connected by continuity of $H'_1$ and $H'_2$, and then $H'_1(\mscr{V'})$ and $H'_2(\mscr{V'})$ are contained in the same connected components of the identities $\id_{\mscr{D}}$ in $\textnormal{Diff}(\mscr{D})$ and $\id_{\R}$ in $\textnormal{Diff}(\R)$, respectively. This shows that the diffeomorphisms $\phi\in H'_1(\mscr{V'})$ and $\psi\in H'_2(\mscr{V'})$ are orientation preserving. We conclude that $\mscr{D}\subset\R^n$ and $\mscr{D}'\subset\R^n$ are vertically equivalent, as desired.
\end{proof}

\vspace{0.5em}

\subsection{Algebraic realization of domains of finite type with given Poincaré-Reeb graph}

In this section we prove our realization results both for compact globally algebraic domains of finite type. We outline that our Theorem \ref{thm:PR-1} corrects the statement and ameliorate the main theorem of \cite{BPS24} even for $n=2$. Moreover, in Theorems \ref{thm:PR-1}\&\ref{thm:Poincaré-Reeb2} we extend the picture to compact stable domains of finite type in $\R^n$, with $n\geq 3$. We are now in position to state our main realization results.


\begin{theorem}\label{thm:PR-1}
	Let $\Gamma:=(V,E)$ be a finite graph (without unbounded edges) whose vertices have degree $1$ or $3$ with $|\Gamma|\subset\R^n$ such that $\pi|_{|\Gamma|}:|\Gamma|\to \R$ is a good orientation with $\pi(v)\neq\pi(w)$ for every $v,w\in V$ with $v \neq w$. Then, there exists a globally algebraic domain $\mscr{D}\subset\R^n$ whose Poincaré-Reeb graph $\mathcal{R}(\mscr{D})$ is isomorphic to $\Gamma$. In addition:
	\begin{enumerate}[label=\emph{(\roman*)}, ref=(\roman*)]
		\item\label{PR1-1} $\partial\mscr{D}$ can be chosen to be a projectively $\Q$-closed $\Q$-nonsingular $\Q$-algebraic hypersurface of $\R^n$.
		\item\label{PR1-2} $\mscr{D}$ is homotopically equivalent to the Poincaré-Reeb space $\widetilde{\mscr{D}}$.
	\end{enumerate}
\end{theorem}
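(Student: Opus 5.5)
The plan is to reduce everything to Theorem \ref{thm:approx}. It suffices to construct a \emph{stable compact} domain of finite type $\mscr{D}_0\subset\R^n$ with two properties: $\mathcal{R}(\mscr{D}_0)\cong\Gamma$ as oriented graphs, and every connected component of every fibre $\pi|_{\mscr{D}_0}^{-1}(y)$ is contractible. Given such a $\mscr{D}_0$, Theorem \ref{thm:approx} yields a globally algebraic stable domain $\mscr{D}\sim_v\mscr{D}_0$ whose boundary satisfies \ref{PR1-1}. Proposition \ref{prop:equiv} then gives $\mathcal{R}(\mscr{D})\cong\mathcal{R}(\mscr{D}_0)\cong\Gamma$. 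Since a vertical equivalence maps fibres of $\pi|_{\mscr{D}_0}$ diffeomorphically onto fibres of $\pi|_{\mscr{D}}$, the contractibility hypothesis transfers to $\mscr{D}$, and Proposition \ref{prop:hom-equiv} gives \ref{PR1-2}.

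To build $\mscr{D}_0$ I would thicken $|\Gamma|$ in a controlled way. First, perturb $|\Gamma|$ by an isotopy preserving $\pi$ on vertices and monotonicity on edges. After this, each edge is a smooth arc transverse to the hyperplanes $\{x_1=c\}$, and near each vertex $v$ the graph looks like a standard model. For a degree $1$ vertex that is a minimum, the model is a half-line going right. For a degree $3$ vertex with two edges going down and one up, the model is a \enquote{Y}. The remaining two cases are symmetric. For $n\ge 3$ the edges can be made disjoint except at vertices; for $n=2$ this is already given since $|\Gamma|\subset\R^2$.

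Next, take $\mscr{D}_0$ to be a tubular thickening of the edges by balls $\{x_1=c\}\cap B(\gamma_e(c),\varepsilon)$, where $\gamma_e(c)$ is the point of edge $e$ at level $c$ and $\varepsilon$ is small enough that tubes of different edges meet only near common vertices. Then replace neighbourhoods of the vertices by explicit local models:
\begin{itemize}
\item at a leaf, a cap $\{x_1\ge \pi(v)+|x'|^2\}$ (or its reflection for a maximum), giving a critical point of $\pi|_{\partial\mscr{D}_0}$ of index $0$ (respectively $n-1$);
\item at a degree $3$ vertex, a \enquote{pair of pants} model $\{x_1 \ge \pi(v)+ x_2^2 - \dots\}$ adjusted so that $\partial\mscr{D}_0$ has a single saddle. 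Concretely, I would use the complement of a cap inserted between two tubes merging, i.e.\ $\mscr{D}_0$ locally equals $\{x_1\le \pi(v)\}\cup\{\text{two tubes}\}$ smoothed. This produces one critical point of index $1$ or $n-2$, located at the crotch.
\end{itemize}
All smoothings are done by smooth functions $h$ with $\mscr{D}_0=\{h\ge0\}$, and gluing uses partitions of unity in $x_1$ only, so that no new critical points appear on the tubes. The critical values are the $\pi(v)$, which are distinct by hypothesis, so $\pi|_{\partial\mscr{D}_0}$ is stable. The fibres are disjoint unions of $(n-1)$-balls away from critical levels, and at critical levels they are either balls, points, or two balls wedged at a point (Lemma \ref{lem:Morse}). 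All of these are contractible, and by construction the fibre components correspond bijectively to points of $|\Gamma|$, which gives $\mathcal{R}(\mscr{D}_0)\cong\Gamma$ with the orientation induced by $\pi$.

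The main obstacle is the local model at a degree $3$ vertex. There I must check that the merging of two tubes produces exactly one nondegenerate critical point of $\pi|_{\partial\mscr{D}_0}$, and that the critical fibre component is contractible; this uses a non-generic index, $1$ or $n-2$, and the link is $\mathbb{S}^{k-1}\times\mathbb{D}^{n-k-1}$. I would do this by writing an explicit function. In coordinates centred at $v$ with the incoming tubes along $x_2=\pm1$, set
\[
h(x)=\varepsilon^2-\bigl(x_2^2-\chi(x_1)\bigr)^2-|x''|^2,
\]
with $\chi$ increasing through $0$ at $\pi(v)$. For $n=2$ this reproduces the two-dimensional construction of \cite{BPS24}. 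Then I would verify the Morse condition via Remark \ref{rem:crit}.
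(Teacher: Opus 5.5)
Your proposal is correct in outline but reaches the statement by a different route from the paper. Your endgame matches the paper's: Theorem~\ref{thm:approx}, then Proposition~\ref{prop:equiv}, then Proposition~\ref{prop:hom-equiv}. Your explicit remark that contractibility of fibre components passes through the vertical equivalence is a step the paper leaves implicit.

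The difference lies in how the smooth stable domain is produced.
\begin{itemize}
\item The paper takes Michalak's Morse function with Reeb graph $\Gamma$ on a closed $n$-manifold \cite{Mic18}. It realizes this as a hypersurface $M\subset\R^{n+1}$ transverse to $\R^n\times\{0\}$ with $f=\Pi|_M$, built from tubes around the edges and half-spheres or pieces of $\mathbb{S}^{n-1}\times\mathbb{S}^1$ at the vertices. It then slices to get $\mscr{D}=\{h(x,0)\geq 0\}$ and identifies $\mathcal{R}(\mscr{D})$ with $\mathcal{R}(\Pi|_M)$ by comparing fibres.
\item You thicken $|\Gamma|$ directly in $\R^n$ with explicit local models, and read off critical points and fibres via Remark~\ref{rem:crit}.
\end{itemize}
The resulting domains are of the same kind, since the paper's slice is also an $\varepsilon$-thickening of $|\Gamma|$. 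The detour through $\R^{n+1}$ lets the paper borrow the handle bookkeeping from Morse theory on a closed manifold. Your route is more elementary and self-contained, but all the weight falls on the local computations, which you have only sketched.

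Those local computations need correcting in a few places.

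\textbf{The trivalent model.} In $h=\varepsilon^2-(x_2^2-\chi(x_1))^2-|x''|^2$, the slice where $\chi=c$ is:
\begin{itemize}
\item two balls for $c>\varepsilon$;
\item one ball for $-\varepsilon<c<\varepsilon$;
\item a point for $c=-\varepsilon$;
\item empty for $c<-\varepsilon$.
\end{itemize}
The boundary critical points are exactly $x_2=0$, $x''=0$, $\chi(x_1)=\pm\varepsilon$. So the saddle sits where $\chi=\varepsilon$, not where $\chi=0$. Also, if $\chi$ reaches $-\varepsilon$ on the single-tube side, that tube is capped off by a spurious extremum. The fix: take $\chi$ monotone from $0$ (single tube, slice $x_2^4+|x''|^2\le\varepsilon^2$) to $1$ (tubes along $x_2=\pm1$), with $\chi(\pi(v))=\varepsilon$ and $\chi'(\pi(v))\neq 0$. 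Then there is a single nondegenerate critical point, of index $n-2$ if $\chi$ increases and index $1$ if it decreases. The critical fibre is $\{|x''|^2\le x_2^2(2\varepsilon-x_2^2)\}$, two cones wedged at the origin.

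\textbf{Critical fibres.} Do not derive these from Lemma~\ref{lem:Morse}. Its proof is valid only when $\frac{\partial h}{\partial x_1}(p)>0$, i.e.\ when $\mscr{D}$ lies on the side where $\pi$ increases. This fails at a maximal leaf and at a split vertex. For index $n-2$, your own link $\mathbb{S}^{k-1}\times\mathbb{D}^{n-k-1}$ would give $\textnormal{Cone}(\mathbb{S}^{n-3}\times\mathbb{D}^1)$, which contradicts ``two balls wedged at a point'' when $n\geq 4$. The fibres you claim (points, balls, two wedged balls) are nevertheless correct and contractible by direct inspection of your models.

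\textbf{Gluing.} A cutoff in $x_1$ alone does not by itself rule out new critical points. The $x'$-gradient of the glued function is a convex combination of two slice gradients, and such a combination could vanish. It does not vanish here because every slice function involved is radially decreasing about the axis of the relevant tube, but you need to say so.
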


\begin{proof}
	By \cite[Thm.4.2]{Mic18}, every graph $\Gamma:=(V,E)$ admitting a good orientation whose vertices have degree $1$ or $3$ can be realized as the Reeb graph of a Morse function. More in detail, the construction in \cite{Mic18} provides a compact $\cinfty$ orientable manifold $M$ with a Morse function $f\in\cinfty(M)$ such that:
	\begin{enumerate}[label=(\alph*), ref=(\alph*)]
		\item\label{en:a} each connected component of $f^{-1}(z)$ of each regular value $z\in\R$ is a sphere $\mathbb{S}^{n-1}$,
		\item\label{en:b} the building blocks defined in \cite[Lem.4.1]{Mic18}, in the case of vertices of degree $1$ and $3$, are obtained just by adding a unique $1$ or $(n-1)$-handle to a disjoint union of spheres $\mathbb{S}^{n-1}$ for vertices of degree $3$ and a unique $0$ or $n$-handle for vertices of degree $1$. 
	\end{enumerate}
	As $\pi|_{|\Gamma|}$ is a good orientation on $\Gamma$, each edge $e\in E$ can be realized in $|\Gamma|\subset\R^n$ as a $\cinfty$ curve transverse to the fibers of $\pi$ at any point. Let us outline how the compact $\cinfty$ orientable manifold $M$ as above can be embedded as a $\cinfty$ hypersurface $M\subset\R^{n+1}$ in such a way that:
	\begin{enumerate}
	\item\label{en:1} $M$ is transverse to $\R^n\times\{0\}$.
	\item\label{en:2} The Morse function $f\in\cinfty(M)$ coincides with $\Pi|_M$, where $\Pi:\R^{n+1}\to\R$ denotes the projection onto the first coordinate, and the set of critical points of $\Pi|_{M}$ coincides with $V\subset\R^n\times\{0\}$. This ensures that $\Pi|_{M}$ is a stable Morse function.
	\item\label{en:3} $|\Gamma|\times\{0\}\subset\R^n\times\{0\}$ is contained in the bounded connected component of $\R^{n}\setminus M$.
	\end{enumerate}
	The argument relies on the fact that any elementary cobordism between disjoint spheres $\mathbb{S}^{n-1}$ as in \ref{en:b} can be realized in $\R^{n+1}$ intersecting transversally $\R^n\times\{0\}$ in such a way that the relative Morse function is a height function. The existence of a good orientation is used to define the elementary cobordisms as in \ref{en:a}\&\ref{en:b} and to avoid the appearing of additional critical points as the realization of each edge $e\in E$ is transverse to the fibers of $\pi$, and hence to the fibers of $\Pi$ when regarding $\R^n$ as $\R^n\times\{0\}$ in $\R^{n+1}$. Indeed, let $a,b\in\N$ such that $a,b\leq 2$ and $a+b\in\{1,3\}$. Let $S_a:=\bigsqcup_{i=1}^a \mathbb{S}^{n-1}$ and $S_b:=\bigsqcup_{i=1}^b\mathbb{S}^{n-1}$. Embed in the standard way $S_a$ and $S_b$ in $\{-1\}\times\R^{n}\subset\R^{n+1}$ and $\{1\}\times\R^{n}\subset\R^{n+1}$ as spheres of radius $1$, respectively. Observe that both $S_a$ and $S_b$ are transverse to $\R^n\times\{0\}\subset\R^{n+1}$. Then, the elementary cobordism $M_{a,b}$ in \ref{en:b} is realized as half of a unit sphere $\mathbb{S}^{n}$ bounding either $S_a$ or $S_b$ if $a+b=1$. On the other hand, if $a+b=3$, then the elementary cobordism $M_{a,b}$ bounding $S_a\sqcup S_b$ is given by the intersection of the usual (algebraic) realization of a torus $\mathbb{S}^{n-1}\times\mathbb{S}^1$ as a hypersurface of $\R^{n+1}$ with $[-1,1]\times\R^n$. Therefore, in both cases, we may suppose the unique critical point of $\Pi|_{M_{a,b}}$ is contained in $\{0\}\times\R^{n-1}\times\{0\}$, $M_{a,b}$ is transverse to $\R^n\times\{0\}$ and $M_{a,b}$ intersects transversally $\{-1\}\times\R^n$ and $\{1\}\times\R^n$. Observe that if $a+b=3$, the critical locus $\Pi|_{M_{a,b}}^{-1}(0)$ of $\Pi|_{M_{a,b}}$ at $0$ is homeomorphic to $\mathbb{S}^{n-1}\vee\mathbb{S}^{n-1}$. 
	
	Let us construct then a $\cinfty$ hypersurface $M\subset\R^{n+1}$ satisfying (\ref{en:1})-(\ref{en:3}). Let $v\in V$ and define $\Gamma_v:=(V',E')$ as the graph such that $V':=V\setminus\{v\}$ and $E':=\{e\in E\,|\,v\notin e\}$. As the original graph $\Gamma$ is finite and without unbounded edges, then $\Gamma'$ is so and therefore $|\Gamma'|\subset|\Gamma|$ is a compact subset of $\R^n$ such that $v\notin |\Gamma'|$. Let $\delta_v>0$ such that $B^n(v,\delta_v)\cap|\Gamma'|=\varnothing$ and define $\delta:=\min_{v\in V}\delta_v>0$. Let $e\in E$ be such that $e=[v,w]$ for $v,w\in V$. As the realization $|e|$ of $e$ in $\R^n$ is a $\cinfty$ curve transverse to the fibers of $\pi$, let $\epsilon_e>0$ be sufficiently small such that $B^{n}(x,\epsilon_e)\cap e'=\varnothing$ for every $e'\in E$ with $e'\neq e$ and $x\in\pi^{-1}([\pi(v)+\frac{\delta}{2},\pi(w)-\frac{\delta}{2}])\cap |e|$. Observe that such a $\epsilon_e>0$ as above exists by compactness of $|\Gamma|$ and $|e|$. Then, define $\epsilon:=\frac{1}{3}\min_{e\in E}\epsilon _e$. Let $x,y\in |\Gamma|\setminus(\bigcup_{v\in V} B^n(v,\frac{\delta}{2}))$. Therefore, by construction, $B^n(x,\epsilon)$ intersects the realization $|e|$ of an edge $e\in E$ if and only if $x\in |e|$, and $B^n(x,\epsilon)\cap B^n(y,\epsilon)=\varnothing$ if $x\in |e_x|$ and $y\in|e_y|$ for $e_x,e_y\in E$ such that $e_x\neq e_y$. Hence, up to shrinking again $\epsilon>0$ if necessary, we may realize the above disjoint unions of spheres $S_a$ and $S_b$ for every vertex $v\in V$ in such a way that $S_a\cup S_b\subset B^{n+1}(v,\delta)$ is a disjoint union of spheres of radius $\epsilon>0$, $S_a\subset  B^{n+1}(v,\delta)\cap\Pi^{-1}(\pi(v-\frac{\delta}{2}))$ while $S_b\subset  B^{n+1}(v,\delta)\cap\Pi^{-1}(\pi(v+\frac{\delta}{2}))$ and whose centers are $|e|\cap \pi^{-1}(\pi(v-\frac{\delta}{2}))$ or $|e|\cap\pi^{-1}(\pi(v+\frac{\delta}{2}))$, respectively, for every $e\in E$ having $v$ as a vertex. Observe that, as above $S_a$ and $S_b$ intersect transversally $\R^n\times\{0\}$. Denote by $N_e$ the boundary of an $\epsilon$-tubular neighborhood of $|e|$ in $\R^{n+1}$, for every $e\in E$. Then, realize the elementary cobordism $M_v:=M_{a,b}$ as above bounding $S_a\sqcup S_b$ in $B^{n+1}(v,\delta)\cap\Pi^{-1}(\pi([v-\frac{\delta}{2},v+\frac{\delta}{2}]))$ in such a way that the unique critical point of $\Pi|_{M_{a,b}}$ is $v$, $M_{a,b}$ intersects transversally $\R^n\times\{0\}$ and the germ of $M_{a,b}$ at its boundary equals the germ of $N_e$ at a connected component of $S_a\cup S_b$ for every $e\in E$ such that $v$ is a vertex of $e$. Observe that, as each edge is realized by a $\cinfty$ curve transverse to the fibers of $\pi$, then $\Pi|_{N_e}$ does not have any critical point. Therefore, define $M_e:=N_e\cap\Pi^{-1}([\pi(v)+\frac{\delta}{2},\pi(w)-\frac{\delta}{2}])$ for $e=[v,w]\in E$. Finally, it suffices define $M:=\bigcup_{v\in V} M_v\cup\bigcup_{e\in E}M_e$, which is a $\cinfty$ manifold satisfying (\ref{en:1})-(\ref{en:3}) by construction.
	
\begin{figure}[h!]\label{fig:michalak}
	
\tikzset{every picture/.style={line width=0.75pt}} 

\begin{tikzpicture}[x=0.75pt,y=0.75pt,yscale=-1,xscale=1]
	
	\draw    (73,219) -- (499,220) ;
	\draw    (73,219) -- (172,81) ;
	\draw    (598,82) -- (499,220) ;
	\draw [color={rgb, 255:red, 0; green, 0; blue, 0 }  ,draw opacity=1 ][line width=1.5]    (222,136) -- (280,136) ;
	\draw [shift={(280,136)}, rotate = 0] [color={rgb, 255:red, 0; green, 0; blue, 0 }  ,draw opacity=1 ][fill={rgb, 255:red, 0; green, 0; blue, 0 }  ,fill opacity=1 ][line width=1.5]      (0, 0) circle [x radius= 4.36, y radius= 4.36]   ;
	\draw [shift={(222,136)}, rotate = 0] [color={rgb, 255:red, 0; green, 0; blue, 0 }  ,draw opacity=1 ][fill={rgb, 255:red, 0; green, 0; blue, 0 }  ,fill opacity=1 ][line width=1.5]      (0, 0) circle [x radius= 4.36, y radius= 4.36]   ;
	\draw [line width=1.5]    (415,135) .. controls (427,118) and (484,117) .. (508,118) ;
	\draw [shift={(508,118)}, rotate = 2.39] [color={rgb, 255:red, 0; green, 0; blue, 0 }  ][fill={rgb, 255:red, 0; green, 0; blue, 0 }  ][line width=1.5]      (0, 0) circle [x radius= 4.36, y radius= 4.36]   ;
	\draw [line width=1.5]    (280,136) .. controls (300,103) and (380,102) .. (357,135) ;
	\draw [line width=1.5]    (280,136) .. controls (255,167) and (332,168) .. (357,135) ;
	\draw [line width=1.5]    (357,135) -- (415,135) ;
	\draw [shift={(415,135)}, rotate = 0] [color={rgb, 255:red, 0; green, 0; blue, 0 }  ][fill={rgb, 255:red, 0; green, 0; blue, 0 }  ][line width=1.5]      (0, 0) circle [x radius= 4.36, y radius= 4.36]   ;
	\draw [shift={(357,135)}, rotate = 0] [color={rgb, 255:red, 0; green, 0; blue, 0 }  ][fill={rgb, 255:red, 0; green, 0; blue, 0 }  ][line width=1.5]      (0, 0) circle [x radius= 4.36, y radius= 4.36]   ;
	\draw [line width=1.5]    (415,135) .. controls (400,154) and (429,168) .. (450,167) ;
	\draw [shift={(450,167)}, rotate = 357.27] [color={rgb, 255:red, 0; green, 0; blue, 0 }  ][fill={rgb, 255:red, 0; green, 0; blue, 0 }  ][line width=1.5]      (0, 0) circle [x radius= 4.36, y radius= 4.36]   ;
	\draw  [dash pattern={on 4.5pt off 4.5pt}]  (245,147) .. controls (201,144) and (221,126) .. (254,127) ;
	\draw [line width=1.5]    (222,136) .. controls (220,121) and (234,109) .. (251,109) ;
	\draw    (245,147) .. controls (244,103) and (254,98) .. (254,127) ;
	\draw  [dash pattern={on 4.5pt off 4.5pt}]  (245,147) .. controls (274,144) and (242,171) .. (299,171) ;
	\draw [line width=1.5]    (297,126) .. controls (296,134) and (317,140) .. (332,133) ;
	\draw    (299,171) .. controls (300,119) and (308,130) .. (308,151) ;
	\draw    (321,121) .. controls (320,72) and (331,75) .. (330,101) ;
	\draw [line width=1.5]    (251,109) .. controls (270,107) and (287,86) .. (325,83) ;
	\draw    (381,145) .. controls (381,93) and (391,101) .. (390,125) ;
	\draw  [dash pattern={on 4.5pt off 4.5pt}]  (299,171) .. controls (353,170) and (354,147) .. (381,145) ;
	\draw [line width=1.5]    (325,83) .. controls (371,84) and (361,110) .. (387,107) ;
	\draw [line width=1.5]    (303,133) .. controls (310,126) and (327,132) .. (319,136) ;
	\draw  [dash pattern={on 4.5pt off 4.5pt}]  (254,127) .. controls (270,129) and (275,106) .. (330,101) ;
	\draw  [dash pattern={on 4.5pt off 4.5pt}]  (330,101) .. controls (373,97) and (374,124) .. (390,125) ;
	\draw  [dash pattern={on 4.5pt off 4.5pt}]  (381,145) .. controls (403,148) and (392,162) .. (422,172) ;
	\draw  [dash pattern={on 4.5pt off 4.5pt}]  (471,106) .. controls (488,106) and (510,105) .. (508,118) .. controls (506,131) and (483,125) .. (462,126) ;
	\draw  [dash pattern={on 4.5pt off 4.5pt}]  (390,125) .. controls (401,125) and (441,104) .. (471,106) ;
	\draw  [dash pattern={on 4.5pt off 4.5pt}]  (357,135) .. controls (336,155) and (257,158) .. (280,136) ;
	\draw  [dash pattern={on 4.5pt off 4.5pt}]  (442,157) .. controls (393,137) and (427,131) .. (462,126) ;
	\draw  [dash pattern={on 4.5pt off 4.5pt}]  (442,157) .. controls (462,161) and (453,184) .. (422,172) ;
	\draw    (462,126) .. controls (463,72) and (471,82) .. (471,106) ;
	\draw  [dash pattern={on 4.5pt off 4.5pt}]  (280,136) .. controls (308,114) and (373,120) .. (357,135) ;
	\draw    (422,172) .. controls (423,113) and (432,130) .. (431,152) ;
	\draw [line width=1.5]    (387,107) .. controls (409,103) and (437,88) .. (467,87) ;
	\draw [line width=1.5]    (427,132) .. controls (414,127) and (418,122) .. (424,122) ;
	\draw [line width=1.5]    (427,132) .. controls (445,134) and (451,151) .. (450,167) ;
	\draw    (245,147) .. controls (245,173) and (254,167) .. (254,127) ;
	\draw    (299,171) .. controls (299,194) and (307,194) .. (308,151) ;
	\draw [line width=1.5]    (222,136) .. controls (218,152) and (238,162) .. (247,162) ;
	\draw    (321,121) .. controls (322,151) and (331,131) .. (330,101) ;
	\draw [line width=1.5]    (247,162) .. controls (265,163) and (244,186) .. (301,186) ;
	\draw [line width=1.5]    (301,186) .. controls (361,184) and (360,160) .. (383,160) ;
	\draw    (462,126) .. controls (462,153) and (471,143) .. (471,106) ;
	\draw    (381,145) .. controls (381,174) and (391,159) .. (390,125) ;
	\draw    (422,172) .. controls (422,201) and (431,187) .. (431,152) ;
	\draw [line width=1.5]    (383,160) .. controls (401,164) and (392,179) .. (424,187) ;
	\draw [line width=1.5]    (424,187) .. controls (439,190) and (448,183) .. (450,167) ;
	\draw [line width=1.5]    (508,118) .. controls (507,146) and (459,143) .. (445,142) ;
	\draw [line width=1.5]    (467,87) .. controls (483,88) and (508,93) .. (508,118) ;
	
	\draw (537,174.4) node [anchor=north west][inner sep=0.75pt]    {$\mathbb{R}^{n} \times \{0\}$};
	\draw (523,53.4) node [anchor=north west][inner sep=0.75pt]    {$\mathbb{R}^{n+1}$};
	\draw (110,170) node [anchor=north west][inner sep=0.75pt]    {$\partial\mathscr{D}=M\cap\left(\mathbb{R}^{n}\times\{0\}\right)$};
	\draw (465,150) node [anchor=north west][inner sep=0.75pt]    {$|\Gamma|$};
	\draw (349,63.4) node [anchor=north west][inner sep=0.75pt]    {$M$};
	\draw (295,192) node [anchor=north west][inner sep=0.75pt]    {$\mathbb{S}^{n-1}$};

\end{tikzpicture}
	\caption{Construction of $M$ as a hypersurface of $\R^{n+1}$ satisfying (\ref{en:1})-(\ref{en:3}) above.}
\end{figure}
	
	
	Let $h\in\cinfty(\R^{n+1})$ such that $M=\mathcal{Z}_{\R^{n+1}}(h)$ and $\nabla h(x,y)\neq (\overline{0},0)$ for every $(x,y)\in M$, where $x\in\R^n$ and $y\in\R$. Up to changing the sign of $h$ we may assume that $\{(x,y)\in\R^n\times\R\,|\,h(x,y)\geq 0\}$ is compact. Consider the set $\mscr{D}:=\{x\in\R^n\,|\,h(x,0)\geq 0\}$ and $\partial\mscr{D}:=\{x\in\R^n\,|\,h(x,0)=0\}$. Observe that $\partial\mscr{D}\subset\R^n$ is a compact $\cinfty$ hypersurface by (\ref{en:1}) and $\pi|_{\partial\mscr{D}}:\partial \mscr{D}\to \R$ satisfies $\Pi|_{\partial\mscr{D}\times\{0\}}=\pi|_{\partial\mscr{D}}\times 0$. Hence, $\pi|_{\partial\mscr{D}}$ is a Morse function by (\ref{en:2}) and its critical points do coincide with the critical points of $\Pi|_M$ by Remark \ref{rem:crit}. Observe that $\pi|_{\mscr{D}}^{-1}(z)$ has the same number of connected components of $\Pi|_{M}^{-1}(z)$ by \ref{en:a}\&(\ref{en:1}) for regular values $z\in \R$ and each of them is diffeomorphic to a disk $\mathbb{D}^n$. Let $z\in\R$ be a critical value of $\Pi|_{M}$. Again, $\pi|_{\mscr{D}}^{-1}(z)$ has the same number of connected components of $\Pi|_{M}^{-1}(z)$ which are diffeomorphic to $\mathbb{D}^n$ by \ref{en:a}\&(\ref{en:1}), thus it suffices to show that the intersection $D$ of the connected component $C$ of $\Pi|_{M}^{-1}(z)$ containing the (unique) critical point $v\in V$ with $\R^{n}\times\{0\}$ is connected. If the index of $v$ is 1, then $C=\{v\}=D$. If the index of $v$ is $3$, then $C$ is homeomorphic to $\mathbb{S}^{n-1}\vee\mathbb{S}^{n-1}$ as it corresponds to the critical locus of a saddle point and hence $D$ is homeomorphic to the union of two disks $\mathbb{D}^{n-1}$ intersecting in a point of their boundaries, therefore it is connected and also contractible. This shows that $\mathcal{R}(\mscr{D})\cong\mathcal{R}(\Pi|_{M})$ and hence $\Gamma\cong\mathcal{R}(\mscr{D})$ as oriented graphs. Finally, an application of Theorem \ref{thm:approx} and Proposition \ref{prop:equiv} gives the result. Moreover, to get \ref{PR1-2} it suffices to apply Proposition \ref{prop:hom-equiv} as each connected component of $\pi|_{\mscr{D}}^{-1}(z)$ is contractible for every $z\in\R$.
\end{proof}

\begin{remark}
	Observe that, as a consequence of Theorem \ref{thm:PR-1}, the condition stated in \cite[Thm.4.6]{BPS24} that the graph $G'$ has to be realizable by a connected globally algebraic domain of finite type $\mscr{D}'\subset\R^2$ is always satisfied; therefore, the conclusion of their result holds even without this condition.
\end{remark}

	Observe that in the case $n=2$ there are no domains of finite type whose Poincaré-Reeb graph has vertices of degree $2$. Hence, for $n=2$, a compact domain of finite type $\mscr{D}\subset\R^n$ is stable if and only if its Poincaré-Reeb graph $\mathcal{R}(\mscr{D})$ only has vertices of degree $1$ or $3$ with distinct images under $\pi$. In higher dimension the situation is different. The next result describes a class of examples of stable domains of finite type $\mscr{D}\subset\R^n$ with $n\geq 3$ whose Poincaré-Reeb graph has vertices of degree $2$. First of all let us introduce a combinatorial condition on the set of vertices of degree $2$ of a graph.

\begin{definition}\label{def:pairing}
	Let $\Gamma:=(V,E)$ be a graph (without unbounded edges) and denote by $V_2:=\{v\in V\,|\,\deg(v)=2\}$ the set of vertices of degree $2$. We say that $\Gamma$ satisfies the \emph{pairing and monotonicity condition} if:
	\begin{enumerate}[label=\emph{(\roman*)}, ref=(\roman*)]
		\item\label{PR2-1} the cardinality $\#V_2$ of $V_2$ is even.
		\item\label{PR2-2} Let $k\in\N$ such that $\#V_2=2k$. Then, there is a choice of elements $v_1,\dots,v_k\in V_2$ and a function $\phi:\{v_1,\dots,v_k\}\to\{v_{k+1},\dots,v_{2k}\}$ such that for every $i=1,\dots,k$ there is a continuous path $\gamma_i:[0,1]\to|\Gamma|$ such that $\gamma_i(0)=v_i$, $\gamma_i(1)=\phi(v_i)$ and $\pi\circ\gamma:[0,1]\to\R$ is monotone.
	\end{enumerate}
\end{definition}

See Figure \ref{fig:V2} below for examples of graphs that do and do not satisfy the parity and monotonicity condition. 

\begin{figure}[h!]\label{fig:V2}
	\tikzset{every picture/.style={line width=0.75pt}} 
	
	\begin{tikzpicture}[x=0.7pt,y=0.7pt,yscale=-1,xscale=1]
		
		\draw [line width=1.5]    (89,131) -- (140,131) ;
		\draw [shift={(140,131)}, rotate = 0] [color={rgb, 255:red, 0; green, 0; blue, 0 }  ][fill={rgb, 255:red, 0; green, 0; blue, 0 }  ][line width=1.5]      (0, 0) circle [x radius= 4.36, y radius= 4.36]   ;
		\draw [shift={(114.5,131)}, rotate = 0] [color={rgb, 255:red, 0; green, 0; blue, 0 }  ][fill={rgb, 255:red, 0; green, 0; blue, 0 }  ][line width=1.5]      (0, 0) circle [x radius= 4.36, y radius= 4.36]   ;
		\draw [shift={(89,131)}, rotate = 0] [color={rgb, 255:red, 0; green, 0; blue, 0 }  ][fill={rgb, 255:red, 0; green, 0; blue, 0 }  ][line width=1.5]      (0, 0) circle [x radius= 4.36, y radius= 4.36]   ;
		\draw [line width=1.5]    (228,131) -- (279,131) ;
		\draw [shift={(279,131)}, rotate = 0] [color={rgb, 255:red, 0; green, 0; blue, 0 }  ][fill={rgb, 255:red, 0; green, 0; blue, 0 }  ][line width=1.5]      (0, 0) circle [x radius= 4.36, y radius= 4.36]   ;
		\draw [shift={(253.5,131)}, rotate = 0] [color={rgb, 255:red, 0; green, 0; blue, 0 }  ][fill={rgb, 255:red, 0; green, 0; blue, 0 }  ][line width=1.5]      (0, 0) circle [x radius= 4.36, y radius= 4.36]   ;
		\draw [shift={(228,131)}, rotate = 0] [color={rgb, 255:red, 0; green, 0; blue, 0 }  ][fill={rgb, 255:red, 0; green, 0; blue, 0 }  ][line width=1.5]      (0, 0) circle [x radius= 4.36, y radius= 4.36]   ;
		\draw  [line width=1.5]  (140,131) .. controls (140,106.7) and (159.7,87) .. (184,87) .. controls (208.3,87) and (228,106.7) .. (228,131) .. controls (228,155.3) and (208.3,175) .. (184,175) .. controls (159.7,175) and (140,155.3) .. (140,131) -- cycle ;
		\draw [line width=1.5]    (158,168) ;
		\draw [shift={(158,168)}, rotate = 0] [color={rgb, 255:red, 0; green, 0; blue, 0 }  ][fill={rgb, 255:red, 0; green, 0; blue, 0 }  ][line width=1.5]      (0, 0) circle [x radius= 4.36, y radius= 4.36]   ;
		\draw [line width=1.5]    (511,130) -- (562,130) ;
		\draw [shift={(562,130)}, rotate = 0] [color={rgb, 255:red, 0; green, 0; blue, 0 }  ][fill={rgb, 255:red, 0; green, 0; blue, 0 }  ][line width=1.5]      (0, 0) circle [x radius= 4.36, y radius= 4.36]   ;
		\draw [shift={(511,130)}, rotate = 0] [color={rgb, 255:red, 0; green, 0; blue, 0 }  ][fill={rgb, 255:red, 0; green, 0; blue, 0 }  ][line width=1.5]      (0, 0) circle [x radius= 4.36, y radius= 4.36]   ;
		\draw [line width=1.5]    (372,130) -- (423,130) ;
		\draw [shift={(423,130)}, rotate = 0] [color={rgb, 255:red, 0; green, 0; blue, 0 }  ][fill={rgb, 255:red, 0; green, 0; blue, 0 }  ][line width=1.5]      (0, 0) circle [x radius= 4.36, y radius= 4.36]   ;
		\draw [shift={(372,130)}, rotate = 0] [color={rgb, 255:red, 0; green, 0; blue, 0 }  ][fill={rgb, 255:red, 0; green, 0; blue, 0 }  ][line width=1.5]      (0, 0) circle [x radius= 4.36, y radius= 4.36]   ;
		\draw  [line width=1.5]  (423,130) .. controls (423,105.7) and (442.7,86) .. (467,86) .. controls (491.3,86) and (511,105.7) .. (511,130) .. controls (511,154.3) and (491.3,174) .. (467,174) .. controls (442.7,174) and (423,154.3) .. (423,130) -- cycle ;
		\draw [line width=1.5]    (442,166) ;
		\draw [shift={(442,166)}, rotate = 0] [color={rgb, 255:red, 0; green, 0; blue, 0 }  ][fill={rgb, 255:red, 0; green, 0; blue, 0 }  ][line width=1.5]      (0, 0) circle [x radius= 4.36, y radius= 4.36]   ;
		\draw [line width=1.5]    (491,93) ;
		\draw [shift={(491,93)}, rotate = 0] [color={rgb, 255:red, 0; green, 0; blue, 0 }  ][fill={rgb, 255:red, 0; green, 0; blue, 0 }  ][line width=1.5]      (0, 0) circle [x radius= 4.36, y radius= 4.36]   ;
		
		\draw [line width=1.5]    (208,94) ;
		\draw [shift={(208,94)}, rotate = 0] [color={rgb, 255:red, 0; green, 0; blue, 0 }  ][fill={rgb, 255:red, 0; green, 0; blue, 0 }  ][line width=1.5]      (0, 0) circle [x radius= 4.36, y radius= 4.36]   ;
		
		\draw (377,51.4) node [anchor=north west][inner sep=0.75pt]    {$| \Gamma _{2} |$};
		\draw (92,52.4) node [anchor=north west][inner sep=0.75pt]    {$| \Gamma _{1} |$};
		\draw (108,140) node [anchor=north west][inner sep=0.75pt]    {$v_{1}$};
		\draw (145,178) node [anchor=north west][inner sep=0.75pt]    {$v_{2}$};
		\draw (210,73) node [anchor=north west][inner sep=0.75pt]    {$v_{3}$};
		\draw (247,140) node [anchor=north west][inner sep=0.75pt]    {$v_{4}$};
		\draw (430,178) node [anchor=north west][inner sep=0.75pt]    {$w_{1}$};
		\draw (492,73) node [anchor=north west][inner sep=0.75pt]    {$w_{2}$};

	\end{tikzpicture}
	\caption{$\Gamma_{1}$ satisfies the pairing and the monotonicity condition, while $\Gamma_{2}$ does not.}
\end{figure}

\begin{remark}\label{rem:pairing}
	Observe that if a graph $\Gamma:=(V,E)$ with $V_2\neq\varnothing$ satisfies the pairing and monotonicity condition, then a function $\phi$ as in Definition \ref{def:pairing} is, in general, not unique. Consider the graph $\Gamma_1$ whose realization is $|\Gamma_1|$ of Figure \ref{fig:V2}. Then, both $\phi_1:\{v_1,v_3\}\to\{v_2,v_4\}$ defined as $\phi_1(v_i)=v_{i+1}$ for $i\in\{1,3\}$ and $\phi_2:\{v_1,v_2\}\to\{v_3,v_4\}$ defined as $\phi_1(v_i)=v_{i+2}$ for $i\in\{1,2\}$ satisfy Definition \ref{def:pairing}\ref{PR2-2}.
\end{remark}

Then our result reads as follows:

\begin{theorem}\label{thm:Poincaré-Reeb2}
	Let $\Gamma:=(V,E)$ be a connected finite graph (without unbounded edges) whose vertices have degree $\leq 3$ such that $|\Gamma|\subset\R^n$ and $\pi|_{|\Gamma|}:|\Gamma|\to \R$ is a good orientation with $\pi(v)\neq\pi(w)$ for every $v,w\in V$. Assume in addition that $\Gamma$ satisfies the pairing and monotonicity condition.
	
	Then, for every $n\geq 3$ there exists a globally algebraic domain of finite type $\mscr{D}\subset\R^n$ whose Poincaré-Reeb graph $\mathcal{R}(\mscr{D})$ is isomorphic to $\Gamma$. In addition, $\partial\mscr{D}$ can be chosen to be a projectively $\Q$-closed $\Q$-nonsingular $\Q$-algebraic hypersurface of $\R^n$.
\end{theorem}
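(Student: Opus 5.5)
The plan is to reduce, via Theorem \ref{thm:approx} and Proposition \ref{prop:equiv}, to constructing a \emph{stable compact} $\cinfty$ domain of finite type $\mscr{D}\subset\R^n$ with $\mathcal{R}(\mscr{D})\cong\Gamma$ as oriented graphs. The algebraic and $\Q$-statements then follow verbatim from Theorem \ref{thm:approx}, exactly as at the end of the proof of Theorem \ref{thm:PR-1}. For the $\cinfty$ construction I would mimic Example \ref{ex:non-retract}. A degree-$2$ vertex should be realized by a critical point of $\pi|_{\partial\mscr{D}}$ at which the fibre component of $\pi|_{\mscr{D}}$ changes topology but stays connected. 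The simplest mechanism is the birth (resp. death) of an internal spherical hole $\mathbb{S}^{n-1}$: an index-$0$ critical point of $\pi|_{\partial\mscr{D}}$ where a small ball is removed from the interior of the fibre, and later an index-$(n-1)$ critical point where the hole closes. Each such event gives a vertex of degree $2$, and they naturally come in pairs. This is the role of the pairing and monotonicity condition.

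First, I build the domain $\mscr{D}_0$ for the graph $\Gamma'$ obtained from $\Gamma$ by erasing the degree-$2$ vertices, merging their two incident edges into a single edge. Every vertex of $\Gamma'$ has degree $1$ or $3$, $\pi|_{|\Gamma'|}=\pi|_{|\Gamma|}$ is still a good orientation, and vertex values remain distinct. So the construction in the proof of Theorem \ref{thm:PR-1} gives a stable compact domain $\mscr{D}_0$ whose fibres over regular values are disjoint unions of disks $\mathbb{D}^{n-1}$. Moreover, $\mscr{D}_0$ is a thin tube around $|\Gamma'|=|\Gamma|$ near each edge. Second, for each pair $(v_i,\phi(v_i))$ I use the monotone path $\gamma_i$ from Definition \ref{def:pairing}; say $\pi(v_i)<\pi(\phi(v_i))$. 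I choose a smooth arc $\widetilde{\gamma}_i$ in the interior of $\mscr{D}_0$ that is $\cinfty$ close to $\gamma_i$, strictly transverse to the fibres of $\pi$, and runs from $\pi^{-1}(\pi(v_i))$ to $\pi^{-1}(\pi(\phi(v_i)))$. Near the vertices of degree $3$ it should stay away from the critical points of $\pi|_{\partial\mscr{D}_0}$. Distinct arcs should be disjoint; this is possible since $n\ge 3$, so the tube has room for them. I then remove a thin closed tubular neighbourhood $T_i$ of $\widetilde{\gamma}_i$ whose ends are rounded off like the sphere in Example \ref{ex:non-retract}. Then $\pi|_{\partial T_i}$ has exactly two critical points, a minimum at height $\pi(v_i)$ and a maximum at height $\pi(\phi(v_i))$, placed over $v_i$ and $\phi(v_i)$.

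Third, I set $\mscr{D}:=\mscr{D}_0\setminus\bigcup_i \mathrm{int}(T_i)$ and check the required properties. The critical points of $\pi|_{\partial\mscr{D}}$ are those of $\pi|_{\partial\mscr{D}_0}$ together with the new ones, and all critical values are distinct, so $\mscr{D}$ is stable. Each fibre component of $\pi|_{\mscr{D}}$ is a disk with finitely many small disks removed, hence connected, because $n-1\ge 2$. This is exactly where $n\ge3$ is needed: for $n=2$ removing an interval disconnects the fibre. Consequently the connected components of the fibres of $\pi|_{\mscr{D}}$ correspond bijectively to those of $\pi|_{\mscr{D}_0}$. The only change to the Poincaré-Reeb graph is that each new critical point subdivides an edge, producing a vertex of degree $2$ at the prescribed height. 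Hence $\mathcal{R}(\mscr{D})\cong\Gamma$ as oriented graphs, as one checks via the description of $\mathcal{R}$ in Subsection \ref{sub:1.1}. Finally, I apply Theorem \ref{thm:approx} and Proposition \ref{prop:equiv}.

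The main obstacle is the second step when $\gamma_i$ passes through vertices of degree $3$ or other degree-$2$ vertices. There the arc must pass through a critical fibre, at a saddle of $\pi|_{\partial\mscr{D}_0}$, while keeping $T_i$ inside the interior and keeping the fibres of $\mscr{D}$ over the saddle level with the right connectivity. I would first use the local model of Lemma \ref{lem:Morse} to route $\widetilde{\gamma}_i$ through the fibre away from the saddle point, using monotonicity of $\pi\circ\gamma_i$ to keep it transverse to the fibres. I would then check that at the saddle level the hole only sits inside one of the two merging disks, so the count of fibre components is unaffected.
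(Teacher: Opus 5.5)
Your proposal is correct and is essentially the paper's argument. You suppress the degree-$2$ vertices to get a graph with vertices of degree $1$ or $3$, realize it as in Theorem \ref{thm:PR-1}, and carve a thin tube out of the interior around a monotone arc transverse to the fibres of $\pi$, supplied by the pairing and monotonicity condition. The tube's boundary contributes a minimum and a maximum, i.e.\ two degree-$2$ vertices, $n\geq 3$ keeps the fibres connected, and Theorem \ref{thm:approx} with Proposition \ref{prop:equiv} finishes the proof. The differences are only organizational: the paper removes one pair at a time by induction, writing the new domain as $\{fg\geq 0\}$, whereas you remove all tubes at once. You also make explicit how the arc is routed past saddle levels, which the paper leaves implicit.
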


\begin{proof}
	Let us prove by induction on the cardinality of the set $V_2$ satisfying \ref{PR2-1}\&\ref{PR2-2}. If $\{v\in V\,|\,\deg(v)=2\}=\varnothing$ the statement reduces to Theorem \ref{thm:PR-1}. Fix $k\in\N$. Let $\Gamma:=(V,E)$ be a finite graph (without unbounded edges) such that $|\{v\in V\,|\,\deg(v)=2\}|=2(k+1)$. Observe that, since $\Gamma$ admits a good orientation, then for each vertex of $v\in V_2$ there are $w_1,w_2\in V$ with $w_1\neq w_2$ such that $[v,w_1],[v,w_2]\in E$. Consider the graph $\Gamma':=(V',E')$ obtained by removing two vertices of degree $2$. More precisely, let $v_1\in V_2$, define $V'=V\setminus\{v_1,v_2:=\phi(v_1)\}$ and $E'$ defined as follows:
	\begin{enumerate}
		\item\label{en:PR2-1} If $e:=[v_1,\phi(v_1)]\in E$, then fix $E':=(E\setminus \{e, e_1,e_2\})\cup\{e'\}$ where $e_i=[v_i,w_i]$ for some $w_1,w_2\in V$ with $w_1\neq w_2$ and $e'$ to be an additional edge connecting $w_1$ and $w_2$. 
		\item\label{en:PR2-2} If $e:=[v_1,\phi(v_1)]\notin E$, then let $w_1,w_2\in V$ and $w_3,w_4\in V$, with $w_1\neq w_2$ and $w_3\neq w_4$, be the unique elements such that $e_1:=[v_1,w_1],e_2:=[v_1,w_2],e_3:=[\phi(v_1),w_3],e_4:=[\phi(v_1),w_4]\in E$ and fix $E':=(E\setminus \{e_1,e_2,e_3,e_4\})\cup\{e'_1,e'_2\}$, where $e_1'$ and $e_2'$ are two edges connecting $w_1$ to $w_2$ and $w_3$ to $w_4$, respectively.
	\end{enumerate}
	
	Consider a realization $|\Gamma|\subset\R^n$ for $n\geq 3$ such that $\pi|_{|\Gamma|}$ induces a good orientation on $\Gamma$, each edge $e\in E$ is transverse to the fibers of $\pi$ and $\pi(v)=\pi(w)$ if and only if $v=w$ for every $v,w\in V$. Observe that, since $\Gamma$ admits a good orientation induced by $\pi$ on $|\Gamma|$ as in Lemma \ref{cor:orientation}, then $\Gamma'$ as above admits a good orientation induced by $\pi$ on $|\Gamma'|$ as well. Up to rename $v_1$ with $\phi(v_1)$, we may assume $\pi(v_1)<\pi(\phi(v_1))$. In particular, this ensures that $\pi(w_1)<\pi(v_1)<\pi(\phi(v_1))<\pi(w_2)$ in (\ref{en:PR2-1}) and $\pi(w_1)<\pi(v_1)<\pi(w_2)\leq\pi(w_3)<\pi(\phi(v_1))<\pi(w_4)$ with $\pi(w_2)=\pi(w_3)$ if and only if $w_2=w_3$ in (\ref{en:PR2-2}). 
	
	\begin{figure}\label{fig:V3}

		\tikzset{every picture/.style={line width=0.75pt}} 
		
		\begin{tikzpicture}[x=0.75pt,y=0.75pt,yscale=-1,xscale=1]
			
			\draw [line width=1.5]    (93,151) -- (144,151) ;
			\draw [shift={(144,151)}, rotate = 0] [color={rgb, 255:red, 0; green, 0; blue, 0 }  ][fill={rgb, 255:red, 0; green, 0; blue, 0 }  ][line width=1.5]      (0, 0) circle [x radius= 4.36, y radius= 4.36]   ;
			\draw [shift={(118.5,151)}, rotate = 0] [color={rgb, 255:red, 0; green, 0; blue, 0 }  ][fill={rgb, 255:red, 0; green, 0; blue, 0 }  ][line width=1.5]      (0, 0) circle [x radius= 4.36, y radius= 4.36]   ;
			\draw [shift={(93,151)}, rotate = 0] [color={rgb, 255:red, 0; green, 0; blue, 0 }  ][fill={rgb, 255:red, 0; green, 0; blue, 0 }  ][line width=1.5]      (0, 0) circle [x radius= 4.36, y radius= 4.36]   ;
			\draw [line width=1.5]    (232,151) -- (283,151) ;
			\draw [shift={(283,151)}, rotate = 0] [color={rgb, 255:red, 0; green, 0; blue, 0 }  ][fill={rgb, 255:red, 0; green, 0; blue, 0 }  ][line width=1.5]      (0, 0) circle [x radius= 4.36, y radius= 4.36]   ;
			\draw [shift={(257.5,151)}, rotate = 0] [color={rgb, 255:red, 0; green, 0; blue, 0 }  ][fill={rgb, 255:red, 0; green, 0; blue, 0 }  ][line width=1.5]      (0, 0) circle [x radius= 4.36, y radius= 4.36]   ;
			\draw [shift={(232,151)}, rotate = 0] [color={rgb, 255:red, 0; green, 0; blue, 0 }  ][fill={rgb, 255:red, 0; green, 0; blue, 0 }  ][line width=1.5]      (0, 0) circle [x radius= 4.36, y radius= 4.36]   ;
			\draw  [line width=1.5]  (144,151) .. controls (144,126.7) and (163.7,107) .. (188,107) .. controls (212.3,107) and (232,126.7) .. (232,151) .. controls (232,175.3) and (212.3,195) .. (188,195) .. controls (163.7,195) and (144,175.3) .. (144,151) -- cycle ;
			\draw [line width=1.5]    (162,188) ;
			\draw [shift={(162,188)}, rotate = 0] [color={rgb, 255:red, 0; green, 0; blue, 0 }  ][fill={rgb, 255:red, 0; green, 0; blue, 0 }  ][line width=1.5]      (0, 0) circle [x radius= 4.36, y radius= 4.36]   ;
			\draw [line width=1.5]    (515,150) -- (566,150) ;
			\draw [shift={(566,150)}, rotate = 0] [color={rgb, 255:red, 0; green, 0; blue, 0 }  ][fill={rgb, 255:red, 0; green, 0; blue, 0 }  ][line width=1.5]      (0, 0) circle [x radius= 4.36, y radius= 4.36]   ;
			\draw [shift={(540.5,150)}, rotate = 0] [color={rgb, 255:red, 0; green, 0; blue, 0 }  ][fill={rgb, 255:red, 0; green, 0; blue, 0 }  ][line width=1.5]      (0, 0) circle [x radius= 4.36, y radius= 4.36]   ;
			\draw [shift={(515,150)}, rotate = 0] [color={rgb, 255:red, 0; green, 0; blue, 0 }  ][fill={rgb, 255:red, 0; green, 0; blue, 0 }  ][line width=1.5]      (0, 0) circle [x radius= 4.36, y radius= 4.36]   ;
			\draw [line width=1.5]    (376,150) -- (427,150) ;
			\draw [shift={(427,150)}, rotate = 0] [color={rgb, 255:red, 0; green, 0; blue, 0 }  ][fill={rgb, 255:red, 0; green, 0; blue, 0 }  ][line width=1.5]      (0, 0) circle [x radius= 4.36, y radius= 4.36]   ;
			\draw [shift={(376,150)}, rotate = 0] [color={rgb, 255:red, 0; green, 0; blue, 0 }  ][fill={rgb, 255:red, 0; green, 0; blue, 0 }  ][line width=1.5]      (0, 0) circle [x radius= 4.36, y radius= 4.36]   ;
			\draw  [line width=1.5]  (427,150) .. controls (427,125.7) and (446.7,106) .. (471,106) .. controls (495.3,106) and (515,125.7) .. (515,150) .. controls (515,174.3) and (495.3,194) .. (471,194) .. controls (446.7,194) and (427,174.3) .. (427,150) -- cycle ;
			\draw [line width=1.5]    (495,113) ;
			\draw [shift={(495,113)}, rotate = 0] [color={rgb, 255:red, 0; green, 0; blue, 0 }  ][fill={rgb, 255:red, 0; green, 0; blue, 0 }  ][line width=1.5]      (0, 0) circle [x radius= 4.36, y radius= 4.36]   ;
			\draw [line width=1.5]    (212,114) ;
			\draw [shift={(212,114)}, rotate = 0] [color={rgb, 255:red, 0; green, 0; blue, 0 }  ][fill={rgb, 255:red, 0; green, 0; blue, 0 }  ][line width=1.5]      (0, 0) circle [x radius= 4.36, y radius= 4.36]   ;
			
			\draw (381,71.4) node [anchor=north west][inner sep=0.75pt]    {$| \Gamma '|$};
			\draw (96,72.4) node [anchor=north west][inner sep=0.75pt]    {$| \Gamma |$};
			\draw (109,157.4) node [anchor=north west][inner sep=0.75pt]    {$v_{1}$};
			\draw (75,157.4) node [anchor=north west][inner sep=0.75pt]    {$w_{1}$};
			\draw (150,140) node [anchor=north west][inner sep=0.75pt]    {$w_{2}=w_3$};
			\draw (208,157.4) node [anchor=north west][inner sep=0.75pt]    {$w_{4}$};
			\draw (153,196.4) node [anchor=north west][inner sep=0.75pt]    {$v_{2}$};
			\draw (210,95) node [anchor=north west][inner sep=0.75pt]    {$v_{3}$};
			\draw (249,158.4) node [anchor=north west][inner sep=0.75pt]    {$v_{4}$};
			\draw (531,159.4) node [anchor=north west][inner sep=0.75pt]    {$v_{4}$};
			\draw (493,95) node [anchor=north west][inner sep=0.75pt]    {$v_{3}$};

		\end{tikzpicture}
		\caption{Construction of $\Gamma':=(V',E')$ from $\Gamma:=\Gamma_1$ with function $\phi:=\phi_1$ such that $\phi(v_1)=v_2$ as in Remark \ref{rem:pairing}.}
	\end{figure}
	
	By inductive assumption, since $|V'_2|=k$, there exists a stable domain of finite type $\mscr{D}'\subset\R^n$ whose Poincaré-Reeb graph $\mathcal{R}(\mscr{D'})$ is isomorphic to $\Gamma'$. Choose two points $x_1,x_2\in\mscr{D}'\setminus\partial\mscr{D}'$ contained in the same connected component of $\mscr{D}'\cap\pi^{-1}(\pi([v_1,\phi(v_1)]))$ as $v_1$ and $\phi(v_1)$, respectively, such that $\pi(x_1)=\pi(v_1)$ and $\pi(x_2)=\pi(\phi(v_1))$. Construct a $\cinfty$ path $\gamma':[0,1]\to\mscr{D}'\setminus\partial\mscr{D}'$ such that: $\gamma'(0)=x_1$, $\gamma'(1)=x_2$, $\pi\circ\gamma':[0,1]\to\R$ is monotone increasing and $\gamma'$ is transverse to the fibers of $\pi$. Observe that the existence of $\gamma'$ is ensured by \ref{PR2-2}. Then, consider the boundary $M\subset\mscr{D}'\setminus\partial\mscr{D}'$ of a sufficiently small $\cinfty$ tubular neighborhood $N$ of $\gamma'([0,1])$ in $\mscr{D}'\setminus\partial\mscr{D}'$ such that $\pi|_{M}$ is a Morse function with just two critical points exactly at $x_1$ end $x_2$. Observe that the latter property is ensured by transversality of $\gamma'$ with respect to the fibers of $\pi:\R^n\to \R$.
	
	Let $f,g\in\cinfty(\R^n)$ such that $\partial\mscr{D}'=\mathcal{Z}_{\R^n}(f)$, $M=\mathcal{Z}_{\R^n}(g)$, $\nabla f(x)\neq \overline{0}$ for every $x\in\partial\mscr{D}'$ and $\nabla g(x)\neq \overline{0}$ for every $x\in M$, such that $\mscr{D}'=\{x\in\R^n\,|\,f(x)\geq 0\}$ and $\{x\in\R^n\,|\,-g(x)\geq 0\}\subset\mscr{D}'$. Define $h:=fg\in\cinfty(\R^n)$. Then, $\mscr{D}:=\{x\in\R^n\,|\,h(x)\geq 0\}$ is a stable domain of finite type such that $\partial\mscr{D}=\partial\mscr{D}'\cup M$ and, by construction, the Poincaré-Reeb graph $\mathcal{R}(\mscr{D})$ of $\mscr{D}\subset\R^n$ is isomorphic to $\Gamma$ in the sense of Definition \ref{def:iso-graph}. We conclude the proof by applying Theorem \ref{thm:approx} to $\mscr{D}$.
\end{proof}

We add a couple of remarks concerning Theorem \ref{thm:Poincaré-Reeb2}.

\begin{remark}\label{rem:construction}
	Assume $\Gamma:=(V,E)$ satisfies the hypothesis of Theorem \ref{thm:Poincaré-Reeb2} and $V_2:=\{v\in V\,|\,\deg(v)=2\}\neq \varnothing$. Then:
	\begin{enumerate}[label={(\roman*)}, ref=(\roman*)]
		\item As observed in Remark \ref{rem:pairing}, the function $\phi$ in Definition \ref{def:pairing} is in general not unique; therefore, different choices of $\phi$ in the proof of Theorem \ref{thm:Poincaré-Reeb2} can give rise to compact domains of finite type $\mscr{D},\mscr{D'}\subset \R^n$ that are not vertically equivalent to each other. This shows that, the converse implication of Proposition \ref{prop:equiv}, namely, $\mathcal{R}(\mscr{D})\cong\mathcal{R}(\mscr{D}')$ implies $\mscr{D}\sim_v \mscr{D}'$ does not hold even for stable compact domains of finite type $\mscr{D},\mscr{D'}\subset\R^n$, with $n\geq 3$. Indeed, if two stable domains of finite type $\mscr{D}, \mscr{D}'\subset \R^n$ such that $\mathcal{R}(\mscr{D})=\mathcal{R}(\mscr{D}')$ are vertically equivalent, then $\pi^{-1}(c)\cap\mscr{D}\cong \pi^{-1}(c)\cap\mscr{D}'$ for every $c\in\R$. Consider the graph $\Gamma_{1}$ and its realization $|\Gamma_{1}|$ in Figure \ref{fig:V2}. Let $\mscr{D},\mscr{D}'\subset\R^n$ be the domains of finite type that we obtain as an application of the proof of Theorem \ref{thm:Poincaré-Reeb2} by choosing the functions $\phi_1$ and $\phi_2$ of Remark \ref{rem:pairing}, respectively. Denote by $\mathbb{D}^n$ the closed unit $n$-disk and by $\mathbb{D}_1,\mathbb{D}_2\subset \text{int} \mathbb{D}^n$ be two closed $n$-disks such that $\mathbb{D}_1\cap \mathbb{D}_2=\varnothing$. Then, by following the argument in the proof of Theorem \ref{thm:Poincaré-Reeb2}, we observe that
		\[
		\pi^{-1}(c)\cap\mscr{D}\cong \mathbb{D}^n\sqcup \mathbb{D}^n\not\cong \pi^{-1}(c)\cap\mscr{D}'
		\] 
		for every $c\in]\pi(v_2),\pi(v_3)[$, as $\pi^{-1}(c)\cap\mscr{D}'$ is either diffeomorphic to $(\mathbb{D}^n\setminus \text{int} \mathbb{D}_1)\sqcup ( \mathbb{D}^n\setminus \text{int}  \mathbb{D}_2)$ or to $( \mathbb{D}^n\setminus (\text{int}  \mathbb{D}_1\cup\text{int}  \mathbb{D}_2))\sqcup  \mathbb{D}^n$ depending on the choices of the smooth paths $\gamma_1$ and $\gamma_2$ (as in the proof of Theorem \ref{thm:Poincaré-Reeb2}) connecting $x_1$ and $x_3$, and $x_2$ and $x_4$, respectively.
		
		\item Observe that, in general, the domains of finite type $\mscr{D}\subset\R^{n}$ with $n\geq 3$ obtained as in the proof of Theorem \ref{thm:Poincaré-Reeb2} are not homotopically equivalent to their Poincaré-Reeb space $\widetilde{\mscr{D}}$ when $\{v\in V\,|\,\deg(v)=2\}\neq \varnothing$. As an evidence, observe that $\mscr{D}\subset\R^n$ in Example \ref{ex:non-retract} can be obtained by following the proof of Theorem \ref{thm:Poincaré-Reeb2} by starting with the standard unit disk $\mathbb{D}^n\subset\R^n$ and $\mscr{D}\cong \mathbb{S}^{n-1}\times[0,1]$, which is not contractible, while its Poincaré-Reeb space $\widetilde{\mscr{D}}$ is, therefore $\mscr{D}$ and $\widetilde{\mscr{D}}$ are not homotopically equivalent.
	\end{enumerate}
\end{remark}

We conclude the article by extending Theorems \ref{thm:PR-1}\&\ref{thm:Poincaré-Reeb2} to any real closed field $R$.

\begin{remark}\label{rem:final}
	Observe that Theorems \ref{thm:PR-1}\&\ref{thm:Poincaré-Reeb2} hold over every real closed field $R$, even if the approximation techniques are not always available. In addition, the rational numbers usually are not (Euclidean) dense over a general real closed field $R$, so even when approximation techniques are available it is not guaranteed to get polynomial equations over $\Q$. Let us outline the way of proving it in the case of Theorem \ref{thm:PR-1}, the case of Theorem \ref{thm:Poincaré-Reeb2} is similar.
	\begin{enumerate}
		\item \emph{Extend the notion of domain of finite $\mscr{D}$ type over $R$.} A subset $\mscr{D}\subset R^n$ is a domain of finite type if $\mscr{D}$ is a Nash submanifold of dimension $n$ with Nash boundary $\partial\mscr{D}\subset R^n$ satisfying the properties Definition \ref{def:dom}\ref{en:dom1}\&\ref{en:dom2} with the following substitutions: $\pi|_{\partial\mscr{D}}$ is Nash Morse function, that is, a Nash function having finitely many critical points and each of them is nondegenerate (i.e. after a Nash change of coordinates the Hessian of $\pi|_{\partial\mscr{D}}$ at a critical point $p$ is a non-singular symmetric matrix) and we substitute the property of properness of $\pi|_{\mscr{D}}$ with the following: the preimage under $\pi|_{\mscr{D}}$ of a closed and bounded semialgebraic set is bounded. Observe that, as in Remark \ref{rem:implicite}(i), the set of critical points of $\pi|_{\partial\mscr{D}}$ corresponds to the set defined by $\frac{\partial f}{\partial x_i}(x)=0$ for every $i=2,\dots,n$, where $f\in \mathcal{N}(R^n)$ is a Nash function whose gradient never vanish over $\partial \mscr{D}=\mathcal{Z}(h)$. In particular, the set of critical points of $\pi|_{\partial\mscr{D}}$ is semialgebraic. By Definition \ref{def:dom}\ref{en:dom1} the set of critical values of $\pi|_{\partial\mscr{D}}$ is finite and $\pi|_{\partial\mscr{D}}$ is Morse, thus the set of critical points of $\pi|_{\partial\mscr{D}}$ is discrete and semialgebraic, hence finite. For more details on Nash Morse functions over real closed fields, or more generally definable over o-minimal structures, we refer to \cite[\S 4.2]{PS07} and references therein.
		
		\item \emph{Extend the notion of Poincaré-Reeb graph $\mathcal{R}(\mscr{D})$ of a domain of finite type $\mscr{D}$ over $R$.} Consider the definition of the Poincaré-Reeb space $\widetilde{\mscr{D}}$ of $\mscr{D}$ in Definition \ref{def:P-R-space} and replace the equivalence relation $\sim$  on $\mscr{D}$ by the following: for every $x,y\in\mscr{D}$ we say that $x\sim y$ if and only if $x$ and $y$ lie in the same semialgebraically connected component of $\pi^{-1}(\pi(x))\cap\mscr{D}$. Then, the graph structure of $\widetilde{\mscr{D}}$ follows as in Definition \ref{def:PR-graph}. Observe that, when $R=\R$, the original definition we gave and this second one do coincide, see \cite[Thm.2.4.5]{BCR98}.
		
		\item Let $h\in\Q[x_1,\dots,x_n]$ satisfying Theorem \ref{thm:PR-1}. We show that the Poincaré-Reeb graph $\mathcal{R}(\mscr{D}_R)\subset R^n$ of $\mscr{D}_R:=\{x\in R^n\,|\, h(x)\geq 0\}$ is isomorphic to $\Gamma:=(V_R,E_R)$. Denote by $V_R\subset R^n$ the set of vertices and by $E_R\subset \mathcal{P}(R^n)$ the set of edges of $\mathcal{R}(\mscr{D}_R)$.
		\begin{enumerate}[label=(\alph*), ref=(\alph*)]
			\item \emph{The set of vertices $V_R\subset R^n$ of $\mathcal{R}(\mscr{D}_R)$ is contained in $(\overline{\Q}^r)^n:=(\R\cap\overline{\Q})^n$.} Observe that $V_R:=\big\{x\in R^n\,|\, h(x)=0,\frac{\partial h}{\partial x_i}(x)=0\quad\text{for $i=2,\dots,n$}\big\}$. Since $V_R$ is a $\Q$-algebraic subset of $R^n$ and $\overline{\Q}^r$ is a real closed field, by model completeness of the theory of real closed fields \cite{Tar51}, we have that $V_R=\textnormal{Zcl}_{R^n}(V_R\cap(\overline{\Q}^r)^n)$, where $\textnormal{Zcl}_{R^n}(V_R\cap(\overline{\Q}^r)^n)$ denotes the Zariski closure of $V_R\cap(\overline{\Q}^r)^n$ in $R^n$. Since $V_R$ is finite, then $V_R=V_R\cap(\overline{\Q}^r)^n$, that is, $V_R\subset(\overline{\Q}^r)^n$ for every real closed field $R$. This shows that the set of critical points $V_R$ of $\pi|_{\partial\mscr{D}_R}$ coincides with the set of critical points $V:=V_\R$ in Theorem \ref{thm:PR-1}.
			
			\item \emph{The set of edges $E_R\subset \mathcal{P}(R^n)$ of $\mathcal{R}(\mscr{D}_R)$ is a semialgebraic set defined over $\overline{\Q}^r:=\R\cap\overline{\Q}$.} Consider the set of critical values $C_R\in R$ of the projection $\pi|_{\partial\mscr{D}_R}$. Observe that, since the set of critical points $V_R\subset (\overline{\Q}^r)^n$ and $\pi:R^n\to R$ is the projection onto the first coordinate, then the set $C_R$ of critical values of $\pi|_{\partial\mscr{D}_R}$ satisfies $C_R=\pi|_{\partial\mscr{D}_R}(V_R)\subset \overline{\Q}^r$. Let $C_R=\{c_1,\dots,c_k\}\subset\overline{\Q}^r$ with $c_i<c_{i+1}$ for every $i\in\{1,\dots,k-1\}$. Let $d_i\in\overline{\Q}^r$ satisfying $c_i<d_i<c_{i+1}$ for every $i\in\{1,\dots,k-1\}$. Then, the set $\pi|_{\partial\mscr{D}_{\overline{\Q}^r}}^{-1} (]c_i,c_{i+1}[_{\overline{\Q}^r})=\pi|_{\partial\mscr{D}_R}^{-1}(]c_i,c_{i+1}[_R)\cap(\overline{\Q}^r)^n$ admits a semialgebraic trivialization $\varphi_{\overline{\Q}^r}:\pi|_{\partial\mscr{D}_{\overline{\Q}^r}}^{-1}(]c_i,c_{i+1}[_{\overline{\Q}^r}) \to \pi|_{\partial\mscr{D}_{\overline{\Q}^r}}^{-1}(d_i)\times ]c_i,c_{i+1}[_{\overline{\Q}^r}$ (\cite[Thm.A.5]{PS07}), that is, a semialgebraic homeomorphism $\varphi_{\overline{\Q}^r}$ as above satisfying $\pi\circ\varphi_{\overline{\Q}^r}=\pi$.  By model completeness of the theory of real closed fields \cite{Tar51}, the semialgebraic trivialization $\varphi_{\overline{\Q}^r}:\pi|_{\partial\mscr{D}_{\overline{\Q}^r}}^{-1}(]c_i,c_{i+1}[_{\overline{\Q}^r}) \to \pi|_{\partial\mscr{D}_{\overline{\Q}^r}}^{-1}(d_i)\times ]c_i,c_{i+1}[_{\overline{\Q}^r}$ extends to a semialgebraic trivialization $\varphi_R:\pi|_{\partial\mscr{D}_R}^{-1}(]c_i,c_{i+1}[_R) \to \pi|_{\partial\mscr{D}_R}^{-1}(d_i)\times ]c_i,c_{i+1}[_R$, showing that $E_R=E_{\overline{\Q}^r}$ for every real closed field $R$. In particular, $E_R=E_\R$ as in Theorem \ref{thm:PR-1}.
		\end{enumerate}
	\end{enumerate}
\end{remark}

\section*{Acknowledgments} The author would like to thank the anonymous referees for their careful reading of the manuscript and for their remarks, which have improved the presentation of this article.



	\printbibliography
	
\end{document}